\documentclass[11pt]{article}

\usepackage{ifpdf}   

\ifpdf  
\usepackage{graphicx,pstricks,xypic,comment}
\DeclareGraphicsRule{.pdftex}{pdf}{.pdftex}{}

\else    
\usepackage{inputenc}
\usepackage[hypertex]{hyperref}      

\usepackage[active]{srcltx}          
\usepackage{graphicx,pstricks,xypic,comment}
\DeclareGraphicsExtensions{.eps,.ps,.pst}        

\fi     

\newcommand{\executeiffilenewer}[3]{%
 \ifnum\pdfstrcmp{\pdffilemoddate{#1}}%
 {\pdffilemoddate{#2}}>0%
 {\immediate\write18{#3}}\fi%
}
\newcommand{%
 \executeiffilenewer{.svg}{.pdf}%
 {inkscape -z -D --file=.svg %
 --export-pdf=.pdf --export-latex}%
 \input{.pdf_tex}%
}[1]{%
 \executeiffilenewer{#1.svg}{#1.pdf}%
 {inkscape -z -D --file=#1.svg %
 --export-pdf=#1.pdf --export-latex}%
 \input{#1.pdf_tex}%
}

\usepackage{color}
\usepackage{mathrsfs} 
\usepackage{amssymb,amscd,amsmath,amsfonts,amsthm}

\theoremstyle{plain}

\newtheorem{theo}{Theorem}[section]
\newtheorem{lem}[theo]{Lemma}
\newtheorem*{lem*}{Lemma}

\newtheorem{prop}[theo]{Proposition}
\newtheorem{conj}[theo]{Conjecture}
\newtheorem*{conj*}{Conjecture}

\theoremstyle{remark}

\theoremstyle{definition}



\newcommand{\ga}{\gamma}
\newcommand{\Si}{\Sigma}

\newcommand{\al}{\alpha}
\newcommand{\De}{\Delta}
\newcommand{\si}{\sigma}
\newcommand{\om}{\omega}
\newcommand{\Ga}{\Gamma}
\newcommand{\be}{\beta}
\newcommand{\la}{\lambda} 
\newcommand{\Om}{\Omega}

\newcommand{\ep}{\varepsilon} 
\newcommand{\Hilb}{{\mathcal{H}}}



\newcommand{\C}{\mathbb C}
\newcommand{\R}{\mathbb R}
\newcommand{\N}{\mathbb N}
\newcommand{\Z}{\mathbb Z}
\newcommand{\T}{\mathbb T}

\newcommand{\su}{\operatorname{SU}_2}

\newcommand{\sldeux}{\operatorname{Sl}_2(\mathbb{Z})}
\newcommand{\mo}{{\mathcal{M}}}  
\newcommand{\tr}{\operatorname{Tr}}
\newcommand{\id}{\operatorname{id}}

\newcommand{\Ci}{{\mathcal{C}}^{\infty}}  
\newcommand{\End}{\operatorname{End}} 


\newcommand{\alt}{\operatorname{alt}} 

\newcommand{\ab}{\operatorname{ab}} 
\newcommand{\ir}{\operatorname{irr}}

\newcommand{\MS}{\operatorname{MS}}

\newcommand{\sL}{{\mathcal{L}}} 
\newcommand{\sM}{{\mathcal{M}}} 
\newcommand{\sP}{{\mathcal{P}}} 
\newcommand{\sT}{{\mathcal{T}}} 
\newcommand{\sA}{\mathcal{A}}

\newcommand{\bpi}{\pi_{\mathbb{C}}}

\title{Knot state asymptotics I\\ AJ Conjecture and abelian representations}

\author{L. Charles\footnote{Institut de
    Math{\'e}matiques de Jussieu (UMR 7586), Universit{\'e} Pierre et
    Marie Curie -- Paris 6, Paris, F-75005 France.}\,\, and J. March{\'e}\footnote{Centre de math{\'e}matiques Laurent Schwartz (UMR 7640), Ecole Polytechnique -- 91128 Palaiseau, France} }

\date{}
\begin{document}

\maketitle
\begin{abstract}
Consider the Chern-Simons topological quantum field theory with gauge group $\su$ and level $k$. Given a knot in the 3-sphere, this theory associates to the knot exterior an element in a vector space. We call this vector the knot state and study its asymptotic properties when the level is large.  

The latter vector space being  isomorphic to the geometric quantization of the $\su$-character variety of the peripheral torus, the knot state may be viewed as a section defined over this character variety. We first conjecture that the knot state concentrates in the large level limit to the character variety of the knot. This statement may be viewed as a real and smooth version of the AJ conjecture. 
Our second conjecture says that the knot state in the neighborhood of abelian representations is a Lagrangian state. 

Using microlocal techniques, we prove these conjectures for the figure  eight and torus knots. The proof is based on $q$-difference relations for the colored Jones polynomial. 
We also provide a new proof for the asymptotics of the Witten-Reshetikhin-Turaev invariant of the lens spaces and a derivation of the Melvin-Morton-Rozansky theorem from the two conjectures.
\end{abstract}

\section{Introduction}

Chern-Simons theory is a topological field theory dealing with surfaces and three dimensional cobordisms. In the classical part of the theory, we associate to each surface $\Si$ a symplectic manifold $\mo_G (\Si)$ and to each three dimensional manifold $M$ with boundary $\Si$ a Lagrangian immersion $\mo_G (M) \rightarrow  \mo_G ( \Si )$. Here $G$ is a compact Lie group and for any compact manifold $P$, $\mo_G ( P)$ is the moduli space of representations of the fundamental group of $P$ in $G$ up to conjugation. 
On the quantum side, we associate to $\Si$ and $M$ a vector space $V_{G,k} ( \Si)$ and a state $Z_{G,k} (M) \in V_{G,k}( \Si)$. The parameter  $k$ is an arbitrary positive integer called the level.

Following Witten (\cite{witten}), the space $V_{G,k} ( \Si)$ may be defined as the geometric quantization of $\mo_G ( \Si)$ and the state $Z_{G,k} (M)$ as a partition function with Chern-Simons action. These definitions are heuristic because they are based on functional integration. Using topological methods, Reshetikhin and Turaev gave a rigorous mathematical construction in \cite{rt}. The spaces and states obtained  satisfy some functorial properties as do the moduli spaces $\mo_G (M)$ and  $\mo_G ( \Si)$, but apart from that, they seem to be completely unrelated to the classical data. 
Later it was established that the vector space geometrically quantizing $\mo_G ( \Si)$ and the  $V_{G,k} ( \Si)$ of \cite{rt} have the same dimension, given by the Verlinde formula, cf. the review article \cite{sorger}. Much more can be said in the case $\Si$ is a torus, cf. for instance \cite{je,an,gu}. 
The relation between the states $Z_{G,k}( M)$  and the moduli spaces $\mo_G ( M)$ remains mysterious. 

The correspondence between classical and quantum mechanics relates Lagrangian submanifolds of the classical phase space to states of the quantum Hilbert space. In particular, in semiclassical analysis, the meaning of this correspondence is that the asymptotic behavior of some quantum states is encoded by a Lagrangian submanifold.

It is natural to conjecture that the states $Z_{G,k} (M)$ correspond to the Lagrangians  $\mo_G ( M)$ in the sense of semi-classical analysis. We will develop in this paper and its companion \cite{LJ2} the case of knot exteriors. Furthermore, we will show that the AJ conjecture, the Melvin-Morton-Rozansky theorem and the Witten asymptotic conjecture take place naturally in that picture. On this basis, we also offer some useful tools to attack this last conjecture. At the end of the second article, we will prove Witten asymptotics conjecture for most Dehn fillings of the figure eight knot. Note that the conjecture has only been proved for Seifert manifolds and some mapping tori, cf. \cite{LJ2} for discussion and references.

Let us give some details. From now on we only consider the group $G = \su$.  Let $K$ be a knot in $S^3$. Let $E_K$ be the complement of an open tubular neighborhood of $K$ so that the boundary of $E_K$ is a 2-dimensional torus $\Sigma$. We define the knot state as the family $$(Z_k(E_K)\in V_k(\Sigma), k \in \Z_{>0} ).$$
Here the $Z_k$'s and $V_k$'s are the ones constructed in \cite{bhmv} where we disregard the anomaly correction.  
The geometric quantization of $\mo (\Si)$ can be carried out as follows. Let  $E$ be the vector space $H_1(\Sigma,\R)$ and $R$ be its lattice $H_1(\Sigma,\Z)$. Then $\mo (\Si)$ is isomorphic to the quotient of $E$ by $R\rtimes \Z_2$ where $\Z_2$ acts by $\pm \id_E$. The canonical projection is the map 
$$ \pi:E\to \mo(\Sigma) , \qquad  \pi(x)(\gamma)=\exp((\gamma\cdot x)D)$$ for any $x\in E,\gamma\in R$ where $\cdot$ stands for the intersection product and $D$ is the diagonal matrix with entries $2i\pi,-2i\pi$. 

 The quantization of $E/R$ at level $k$ is isomorphic to the space $\Hilb_k $  of $j$-holomorphic $R$-invariant sections of $L^k\otimes \delta$ over $E$. Here $L$ is a prequantum bundle over $E$, $j$ a linear complex structure and $\delta$ a half-form bundle, cf. Section \ref{sec:torus-quantization} for precise definitions. The space $\Hilb_k$ actually consists of theta series. The quantization of $\mo (\Si)$ is isomorphic to the subspace $\Hilb_k^{\alt}$ of alternating sections. Equivalently $\Hilb_k^{\alt}$ is the space of holomorphic sections of a genuine (orbi)-bundle over $E/(R \rtimes \Z_2)$.
It is known that there exists an isomorphism 
$$I_k: V_k(\Sigma)\sim \Hilb_k^{\alt}$$ natural with respect to the projective action of the mapping class group of $\Si$, here we use the construction provided in \cite{l2}.  We will prove that this isomorphism is uniquely defined up to a phase of the form $\pi (n/4 + n'/2k)$ with $n, n' \in \Z$, cf. Theorem \ref{sec:equiv-top-geom}. This allows us to consider the knot state as a family $(Z_k (E_K) \in \Hilb^{\alt}_k , k \in \Z_{>0})$ of holomorphic sections well-defined up to these phases.  

The first semi-classical invariant of a quantum state is its microsupport. Roughly speaking this is the subset of the phase space where the state lives. This notion was initially introduced by H{\"o}rmander for solutions of partial differential equation under the name of wave front set, cf. the discussion in \cite{hormander} p. 323.  The analog notion for the quantization of K{\"a}hler manifolds, which is relevant here, has been defined in \cite{l5}. 
We conjecture that the microsupport of the knot state is included in the image of the restriction map $r: \mo (E_K) \rightarrow \mo ( \Si)$. For the introduction, we state a weaker version of the actual Conjecture \ref{sec:conj_microsupport}.

\begin{conj}\label{conj:microsupport_intro}
Let $K$ be a knot in $S^3$.
For any $x \in  E \setminus \pi ^{-1} (r ( \mo (E_K))) $,  there is a a sequence of positive numbers $(C_M)$ such that for any $M$ and $k$,
$$|Z_k(E_K)(x)|\le C_M k^{-M} .$$
\end{conj}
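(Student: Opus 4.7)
The plan is to translate the $q$-difference relations satisfied by the colored Jones polynomials of $K$ into Berezin--Toeplitz operators that annihilate the knot state $Z_k(E_K)$ modulo $O(k^{-\infty})$, and then to invoke the K\"ahler microlocal machinery of \cite{l5} to deduce rapid decay outside the common characteristic set of these operators. The whole game is to arrange that this common zero set coincides with $\pi^{-1}(r(\mo(E_K)))$.

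Concretely, I would first recall that the pairing of $Z_k(E_K)$ with the standard basis of $\Hilb_k^{\alt}$ coming from colored solid-torus states reproduces the colored Jones polynomials $J_n(K;q)$ evaluated at $q=\exp(2i\pi/k)$. Under the isomorphism $I_k$, multiplication by characters and translations by lattice vectors on $\Hilb_k^{\alt}$ correspond to the two generators $L,M$ of the quantum torus acting on the Jones polynomials. Crucially, both operations are Toeplitz operators on the K\"ahler quantization of $\mo(\Si)$, with computable principal symbols: translation by a rational vector $\ga \in E$ gives a Toeplitz operator with principal symbol the corresponding character $\chi_\ga$ of $E/R$. Hence any recursion $P(L,M,q)\cdot J_n(K;q)=0$ holding for all $n$ gives rise to a Toeplitz operator $T_k$ with $T_k Z_k(E_K)=O(k^{-\infty})$, whose principal symbol is the classical specialization $P(\ell,m,1)$.

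The second step is pure microlocal analysis on a quantized K\"ahler manifold: if $T_k\psi_k=O(k^{-\infty})$ for a Toeplitz operator with principal symbol $\si$, then the microsupport of $\psi_k$ is contained in $\{\si=0\}$. This is the K\"ahler avatar of the non-characteristic rapid decay theorem, proved in \cite{l5}. Intersecting the zero sets of all the symbols of operators annihilating $Z_k(E_K)$, one recovers (abelian locus) $\cup$ $\{A_K=0\}$, which is by the very definition of the A-polynomial the image $r(\mo(E_K))\subset\mo(\Si)$. Pulling back by $\pi$ gives the desired decay statement.

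The main obstacle is that for an arbitrary knot it is not known whether a $q$-difference relation with the required classical limit exists; this is essentially the content of the AJ conjecture. That is why the introductory statement is phrased as a conjecture, while the body of the paper restricts to torus knots and the figure eight knot, for which Habiro's cyclotomic expansion and the Garoufalidis--L\^e recurrence provide explicit annihilating operators whose symbols are tractable. For these knots the above plan is effective, but one must still carry out a careful symbol computation, tracking normal ordering of $L$ and $M$ to identify both the principal and subprincipal symbols, and one must take care near the abelian component of $\mo(E_K)$, where the symbol degenerates and a finer Lagrangian-state analysis -- the subject of the companion conjecture -- becomes necessary for quantitative statements, even though for the present qualitative microsupport statement the abelian locus is automatically in $r(\mo(E_K))$ and hence needs no separate treatment.
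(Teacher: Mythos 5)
Your overall strategy --- turn $q$-difference relations for $J^K$ into Toeplitz operators acting on the knot state via Theorem \ref{sec:Toeplitz_operators} and Equation (\ref{representation}), and then bound the microsupport by the common zero set of their principal symbols --- is exactly the mechanism used in Section \ref{sec:micro_abelian}, and you correctly observe that the general statement stays a conjecture because annihilating relations with the right classical limit are not available for an arbitrary knot. The genuine gap is in your central claim that the common characteristic set is $\{A_K=0\}$ together with the abelian locus and that this ``is by the very definition of the A-polynomial'' equal to $\pi^{-1}(r(\mo(E_K)))$. Neither identification holds. The A-polynomial describes the $\operatorname{Sl}_2(\C)$ deformation variety; its real (unit-modulus) points in general strictly contain the image of the $\su$-character variety, and moreover $\ep(\al)=FA_K$ only up to a fraction $F\in\C(M)$, whose zeros and poles contribute spurious lines $\{q=q_i\}$. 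The paper states explicitly that the inclusion (\ref{eq:micro_AJ_conjecture}) obtained this way is \emph{weaker} than Conjecture \ref{sec:conj_microsupport} even for the knots it treats. Concretely, for the figure eight knot the single recurrence $QJ^8=R$ of Proposition \ref{sec:recurrence-relations-single} does not suffice: the principal symbol of $Q$ carries the extra factor $\sM^2-\sM^{-2}$, whose zero lines have nothing to do with $r(\mo(E_8))$, and no amount of care with normal ordering (which only affects subprincipal terms) removes a component of the principal symbol. The paper circumvents this by using the $2\times 2$ system of Proposition \ref{sec:recurrence-relations-system} for the pair $(J_\ell,I_\ell)$, whose symbolic determinant vanishes exactly on $X_8$ up to the points $\mu/2+\mu\Z+\tfrac{\la}{2}\Z$, and those leftover points are then excluded by a propagation-of-singularities argument applied to $QJ=R$. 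Your proposal has no substitute for either step.

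Two further points. For torus knots this symbol calculus only yields the containment of Theorem \ref{prop:torus_ab_preuve}, namely $\MS(Z_k(E_{a,b}))\subset D_0\cup D_{ab}\cup(D_{ab}+\tfrac{1}{2ab}\la)$, which is strictly larger than $\pi^{-1}(r(\mo(E_{a,b})))$ since the irreducible image consists of finitely many arcs rather than whole lines; the full microsupport statement is deferred to \cite{Ctoric}, so even in the ``proved'' cases the conjecture does not follow from the characteristic-set argument alone. Finally, the relations actually used are non-homogeneous, $PZ_k(E_K)=TZ^0_k$ rather than $PZ_k(E_K)=O(k^{-\infty})$; off the abelian line the right-hand side is $O(k^{-\infty})$ so your framing is harmless for the qualitative microsupport bound, as you note, but it is worth keeping track of, since that right-hand side is precisely what drives the Lagrangian asymptotics on $r(\mo^{\ab})$.
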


Numerical evidence for the conjecture is provided by the representation of the norm of the knot state in Figure \ref{fig:husimi}.

\ifpdf
\begin{figure}\label{fig:husimi}
\begin{center}
\includegraphics[width=12cm]{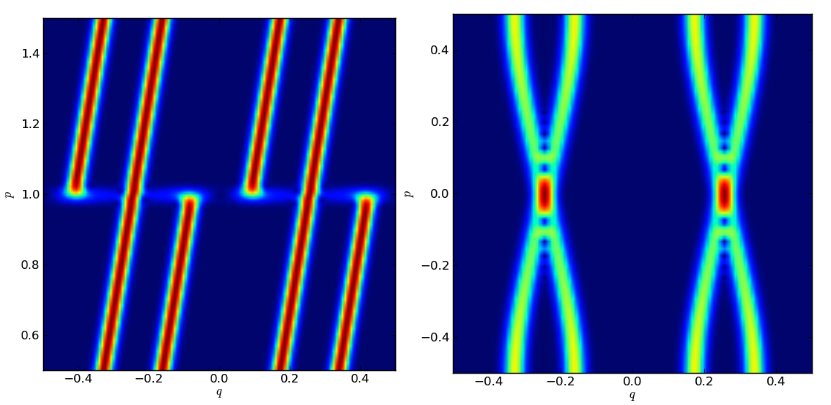}
\caption{The pointwise norm of the knot state for the trefoil and the figure eight knot at level $k=200$}
\end{center}
\end{figure}
\fi

In Theorem \ref{sec:8_theo}, we prove this conjecture for the figure eight knot, and in Theorem \ref{prop:torus_ab_preuve} we prove a weaker statement for the torus knots. The complete proof for torus knots will be given in \cite{Ctoric}. A stronger version of the conjecture is that the microsupport of the knot state is exactly $r ( \mo ( E_K))$. This version holds true for the figure eight knot but remains conjectural for torus knots.

Using the characterization of the microsupport in terms of Toeplitz operators (Proposition \ref{prop:micro_supp_Toeplitz}), we can view this conjecture as a smooth and real version of the AJ conjecture of Garoufalidis \cite{gar}. This will be explained with details in Section \ref{sec:relation-with-aj}. 

To complete this, we consider the asymptotic behavior of the knot state on $r( \mo ( E_K))$. We conjecture that the knot state is Lagrangian. Such states were introduced initially to construct solutions of linear differential equation and called WKB solutions. These functions have an oscillatory integral representation with a phase parameterizing appropriately a Lagrangian submanifold of the phase space \cite{duistermaat}.  
Here we use the definition proposed in \cite{l3} for the quantization of K{\"a}hler manifolds.  

The moduli space $\mo ( E_K)$ is the union of the subsets $\mo^{\ab}$ and $\mo^{\ir}$ consisting respectively of abelian and irreducible representations. In this paper we only consider the asymptotic behavior of the knot state on $r( \mo^{\ab})$, the irreducible part $r( \mo^{\ir})$ will be treated in \cite{LJ2}. Let $\lambda$ be the longitude of $K$, viewed as an element of $H_1(\Sigma,\Z)$. Since $H_1 ( E_K, \Z)$ is generated by a meridian, $ r ( \mo ^{\ab} (E_K))) = \pi (\la \R)$. 

\begin{conj} \label{conj:abelien_intro}
Let $K$ be a knot in $S^3$ and $\Delta_K$ be its Alexander polynomial. Let $q \in \R$ such that $\De_K ( e ^{4i \pi q} ) \neq 0$ and such that a neighborhood of $\pi ( q \la)$ does not meet $r( \mo ^{\ir})$. 
Then 
$$ Z_k (E_K) (q\la ) \sim \sqrt{2} e^{im\frac\pi 4}t_{\lambda}^k (q \la) \frac{\sin(2\pi q) }{\Delta_K (e^{4i\pi q})}\Omega_{\lambda} $$ 
where $m$ is an integer, $t_\la$ is a flat section of $L \rightarrow \R \la$ and $\Om_{\la}\in \delta$ is such that $\Om_\la ^2 ( \la) = 1$.   
\end{conj}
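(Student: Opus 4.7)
The general strategy is to combine the Reshetikhin--Turaev description of $Z_k(E_K)$ in a natural orthonormal basis of $\Hilb^{\alt}_k$ with the explicit closed-form expressions for the colored Jones polynomial available for the figure eight and torus knots, and then extract the asymptotics of the resulting oscillatory sum at the point $q\la$. Concretely, let $(\psi_n^{(k)})_{1\leq n \leq k-1}$ be the standard odd-theta basis of $\Hilb^{\alt}_k$, so that under $I_k$ one may write
$$Z_k(E_K) \;=\; \sum_{n=1}^{k-1} J_n(K,e^{i\pi/k})\, \psi_n^{(k)},$$
where $J_n$ is the $n$-colored Jones polynomial; this expansion is functorial, each $\psi_n^{(k)}$ corresponding to capping $E_K$ with a solid torus carrying a Wilson loop in the $n$-th irreducible representation of $\su$.

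Evaluation at $x=q\la$ is favorable because the longitude spans a real isotropic direction: an explicit computation of theta functions gives $\psi_n^{(k)}(q\la) = c_k\, t_\la^k(q\la)\, \sin(2\pi n q)\, \Omega_\la$ for an explicit $k$-dependent prefactor $c_k$. Thus the problem reduces to the asymptotic analysis of
$$S_k(q) \;:=\; \sum_{n=1}^{k-1} J_n(K, e^{i\pi/k})\, \sin(2\pi n q).$$
For the figure eight and the torus knots, the colored Jones admits a Habiro-type cyclotomic, resp. a short finite-sum, expression. After evaluation at $e^{i\pi/k}$ each summand takes the shape $e^{ik\,\Phi_j(n/k)}\,a_j(n/k)$ up to negligible terms, so that rescaling $u=n/k$ converts $S_k$ into a Riemann sum for an oscillatory integral $\int_0^1 e^{ik\Phi(u,q)}\,a(u,q)\,du$ amenable to stationary phase.

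The critical points of the total phase (including the $\pm 2\pi k u q$ coming from $\sin(2\pi n q)$) lie on the set $u=\pm 2q \pmod 1$, which is precisely where the rescaled Jones variable $Q=e^{i\pi u}$ satisfies $Q^2 = e^{\pm 4i\pi q}$; this is the signature of the abelian locus in the characteristic variety of the $q$-difference operator annihilating $J_n$. Computing the amplitude at these critical points yields, after the standard simplification of the cyclotomic or finite-sum expression in the limit $q\to 1$, $q^n\to e^{4i\pi q}$, a value proportional to $\sin(2\pi q)/\Delta_K(e^{4i\pi q})$, which is the Melvin--Morton--Rozansky phenomenon manifesting itself here as an \emph{output} of the computation. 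The Hessian of $\Phi$ at the two symmetric critical points is non-degenerate (thanks to $\Delta_K(e^{4i\pi q})\neq 0$), and their coherent combination supplies the $\sqrt{2}$; the signature of the Hessian, combined with the normalization phases absorbed into $I_k$ and into the choice of $t_\la^k$, produces the integer Maslov exponent $m$ in $e^{im\pi/4}$.

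The main obstacle will be controlling the error in the stationary phase reduction uniformly in $k$ and showing that no other stationary contribution survives at leading order. Two issues must be handled: first, the hypothesis $\Delta_K(e^{4i\pi q})\neq 0$ is exactly what rules out coalescence of critical points with poles of the Jones amplitude; second, one must exclude contributions from non-abelian representations, which is where the assumption that a neighborhood of $\pi(q\la)$ avoids $r(\mo^{\ir})$ enters, together with a localized version of the microsupport Conjecture~\ref{conj:microsupport_intro} that ensures negligibility of any piece of $S_k(q)$ coming from frequencies whose stationary points would correspond to irreducible representations. Assembling these estimates and identifying the prequantum and half-form factors $t_\la^k(q\la)\,\Omega_\la$ yields a Lagrangian state in the sense of \cite{l3} with precisely the announced principal symbol.
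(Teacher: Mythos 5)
Your reduction breaks down at the very first quantitative step: the claimed evaluation $\Psi_n(q\la)=c_k\,t_\la^k(q\la)\sin(2\pi nq)\,\Om_\la$ with an $n$-independent prefactor is false in the holomorphic model used here. By the theta-series construction and Proposition \ref{sec:base_lagrangian_states}, each basis vector $\Psi_n$ (hence each $e_n=\tfrac{1}{\sqrt2}(\Psi_n-\Psi_{-n})$) is a Lagrangian state concentrated on the lines $-\tfrac{n}{2k}\la+\R\mu$ modulo $R$; its value at the point $q\la$ carries Gaussian factors $e^{-2\pi\beta k(q+\frac{n}{2k}+m)^2}$ and is $O(e^{-k/C})$ unless $n\equiv \mp 2kq$ up to $O(\sqrt k)$ modulo $2k$. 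There is no clean $\sin(2\pi nq)$ law (that would be the character/real-polarization heuristic, not the pointwise value of a theta basis vector), so your sum $S_k(q)=\sum_n J_n\sin(2\pi nq)$ is not what $Z_k(E_K)(q\la)$ equals. The correct evaluation is a Gaussian-localized sum over colors $n$ near $\pm 2kq$, i.e.\ exactly the Melvin--Morton--Rozansky regime $n/2k\to q$; to extract the amplitude $\sin(2\pi q)/\De_K(e^{4i\pi q})$ from it you would need uniform MMR-type asymptotics of $J_n(-e^{i\pi/2k})$ in that regime, with error control good enough to survive the Gaussian averaging. You do not supply this: you assert it appears as an output of stationary phase on the Habiro/finite-sum formulas, but in the abelian regime the Habiro sum is not governed by nondegenerate stationary phase (it converges termwise, geometrically), and in the paper the MMR statement (Theorem \ref{theo:MMR}) is a \emph{consequence} of this conjecture, not an available input -- so your logic is close to circular. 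The same problem affects your appeal to ``a localized version of the microsupport conjecture'' (Conjecture \ref{sec:conj_microsupport}) to kill other contributions: for the knots in question that statement is itself only established by machinery you have not set up.

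The paper's proof (for torus knots and the figure eight, Theorems \ref{prop:torus_ab_preuve} and \ref{sec:8_theo}) runs differently and avoids any asymptotic input on the Jones polynomials themselves: the $q$-difference relations of Propositions \ref{sec:recurrence-relations-torique} and \ref{sec:recurrence-relations-system} are converted, via $Z_k(Mf)=T^*_{\mu/2k}Z_k(f)$, $Z_k(Lf)=T^*_{-\la/2k}Z_k(f)$ and Theorem \ref{sec:Toeplitz_operators}, into Toeplitz-operator identities of the form $P\,Z_k(E_K)=T\,Z_k^0$, where $Z_k^0$ is the explicit state shown in Theorem \ref{sec:ground_state} to be a Lagrangian state along $\R\la$. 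Wherever the principal symbol of $P$ is invertible one inverts microlocally, and the symbolic calculus both bounds the microsupport and computes the principal symbol of the knot state on $\R\la$; the Alexander polynomial then appears algebraically from the symbols of the recursion operators (e.g.\ $\si^{ab}(\si^a-\si^{-a})(\si^b-\si^{-b})/(1-\si^{2ab})=-(\si-\si^{-1})/\De_{a,b}(\si^2)$), with the half-form factor $\Om_\la$, the flat section $t_\la^k$, the $\sqrt2$, and the eighth-root-of-unity phase all coming from Theorem \ref{sec:ground_state} rather than from a Hessian/Maslov computation. If you want to salvage your route, you must (i) replace your evaluation formula by the genuine Gaussian coherent-state pairing, and (ii) prove, independently, a uniform MMR expansion with the stated range and error -- at which point you would essentially be reproving the harder half of the paper by other means.
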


As for Conjecture \ref{conj:microsupport_intro}, this statement is a simplified version. The actual conjecture  gives the asymptotic behavior of the knot state uniformly on the neighborhood of $q \la$ for any $q$ satisfying the above assumption, cf. Conjecture \ref{conj:abelian}. So it describes the transition between the oscillatory sequence of Conjecture \ref{conj:abelien_intro} and the $O(k^{-\infty})$ sequence of Conjecture \ref{conj:microsupport_intro}. 

 In Theorems \ref{prop:torus_ab_preuve} and \ref{sec:8_theo}, we prove Conjecture 5.2 for the torus knots and the figure eight knot.

The scalar product of two Lagrangian states whose microsupports intersect transversally can be estimated with a pairing formula. From this we obtain a new proof of the asymptotic formula of Jeffrey \cite{je} for the WRT-invariants of the lens spaces. Furthermore we derive an analytic version of the Melvin-Morton-Rozansky conjecture from Conjecture 5.1 and 5.2. 
More precisely, we show that there exists $\delta >0$ such that for any $k$ and $\ell$ satisfying $ 1 \leqslant \ell \leqslant k  \delta$,    
 $$\tilde {J}_{\ell}^K(e^{-2i\pi/k})=\frac{1}{\Delta_K (e^{2i\pi\ell/k})}  + O( \ell^{-1})   $$
with a $O$ uniform with respect to $k$ and $\ell$. 
In this formula $\tilde J_\ell^K$ is the sequence of colored Jones polynomials normalized so that it equals $1$ for the unknot (see Section \ref{sec:jones}).
Our proof has the advantage over the previously known \cite{glmmr} in that it gives the optimal bound for $\delta$, it is any real number less than 
$$ \inf \bigl\{ q; \;  \Delta_K (e^{2i\pi q }) = 0  \text{ or } \exists \rho\in \mo^{\ir}(E_K)\text{ such that }\tr(\rho(\mu))=2\cos(2\pi q) \bigr\}$$
Here $\mu$ is a meridian of $K$. For instance, for the figure eight knot and the trefoil, this infimum is $\frac{1}{6}$. As we will see in \cite{LJ2}, the asymptotic behavior changes drastically when we overpass this bound.

In Section 2, we recall the basic ingredients of TQFT needed in this article, discuss the geometric quantization of the torus and relate it to TQFT. Section 3 is devoted to the semi-classical properties of curve operators and basis vectors. We introduce the knot state in Section 4 and provide formulas for the colored Jones functions of the figure eight and torus knot together with old and new q-difference relations. In Section 5, we state our main conjectures and prove them in the case of figure eight and torus knots. In the last section, we state the pairing formula and apply it to the WRT invariants of the lens spaces and to Melvin-Morton-Rozansky theorem. 
The paper ends with an appendix on Toeplitz operators.

{\bf Acknowledgements:}

We would like to thank Fr{\'e}d{\'e}ric Faure for his help in obtaining the figures presented in the introduction and his interest in this work. We also thank Gregor Masbaum and the ANR team "Quantum Geometry and Topology" for their interest and valuable discussions. The second author was supported by the French ANR project ANR-08-JCJC-0114-01.

\section{Geometrical versus topological constructions}\label{geomtopol}

\subsection{Topological quantum field theories (TQFT)}\label{tqft}

In this paper we work with the topological quantum field theory for the group $\su$ constructed in \cite{bhmv} from the Kauffman bracket. This theory provides a family $(V_k, Z_k), \; k \in \Z_{>0}$ of functors from cobordism categories to the category of complex vector spaces. The integer $k$ is called the level. The cobordism category at level $k$ is defined as follows: 
\begin{itemize}
\item[-] objects are pairs $(\Sigma,\nu)$ where $\Sigma$ is a closed oriented surface and $\nu$ is a linear Lagrangian subspace of $H_1(\Sigma,\R)$.
\item[-] a morphism between two objects $(\Sigma_0,\nu_0)$ and $(\Sigma_1,\nu_1)$ is a 4-tuple $(M,L,c,n)$ where $M$ is an oriented 3-manifold whose boundary is identified with $-\Sigma_0\amalg\Sigma_1$, $L$ is a banded link in $M$ whose components are assigned a color $c:\pi_0(L)\to\{1,\ldots,k-1\}$ and $n$ is an integer. 
\end{itemize}

The composition of two morphisms $(M,L,c,n):(\Sigma_0,\nu_0)\to(\Sigma_1,\nu_1)$ and $(M',L',c',n'):(\Sigma_1,\nu_1)\to(\Sigma_2,\nu_2)$ is given by $(M\cup_{\Sigma_1} M',L\amalg L',c\amalg c',n'')$ where 
\begin{gather} \label{eq:comp_maslov}
n''=n+n'-\mu(\nu_M \circ \nu_0,\nu_1,\nu_2 \circ \nu_{M'}).
\end{gather}
In this formula,  $\nu_M$ is the kernel of the map $H_{1} (\Sigma_0) \oplus H_1( \Sigma_1) \rightarrow H_1(M)$ induced by the inclusion, $\nu_{M'}$ is defined similarly and    
$\mu$ is the Maslov index of a triple of Lagrangians in $H_1(\Sigma_1)$. 
Here instead of using the notion of $p_1$-structure, we followed the approach of Walker to resolve anomalies. We refer to \cite{gm} for a detailed discussion.

The TQFT associates to any object $(\Sigma,\nu)$ a Hermitian vector space $V_k(\Sigma,\nu)$ and to any morphism $(M,L,c,n)$ between $(\Sigma_0,\nu_0)$ and $(\Sigma_1,\nu_1)$ a morphism 
$$Z_k(M,L,c,n)\in\textrm{Hom}(V_k(\Sigma_0,\nu_0),V_k(\Sigma_1,\nu_1)).$$
 This correspondence is functorial, it maps disjoint unions to tensor products and orientation reversal to complex conjugation. 

By construction one has for any morphism $(M,L,c,n)$ and integer $n$ the equality 
\begin{gather} \label{eq:def_tauk} 
Z_k(M,L,c,n)=\tau_k^nZ_r(M,L,c,0) \qquad \text{with } \qquad \tau_k= e^{\frac{3i\pi}{4}- \frac{3i\pi}{2k}}.
\end{gather}
As we are interested in the asymptotics $k\to\infty$, we see that ignoring the anomaly will cause at first order an indeterminacy in  the group of 8-th roots of unity.

In the sequel, we mainly work with a torus $\Sigma$. Choose a Lagrangian $\nu \in H^1( \Si)$ and let $V_k ( \Si ) = V_k ( \Si , \nu)$.  Fix an oriented diffeomorphism $\phi:\Sigma\to S^1\times S^1$. 
Then the manifold $D^2\times S^1$, the banded link $x=[0,1/2]\times S^1$ and the color $\ell\in\{1,\ldots,k-1\}$ gives a vector 
$$e_\ell :=Z_k(D^2 \times S^1, x,\ell,0) \in V_k ( \Si) .$$ 
For any curve $\gamma$ of $\Si$, define the endomorphism $Z_k ( \gamma)$ of $V_k ( \Si)$ by  
$$Z_k ( \ga) := Z_k(\Si\times [0,1],\gamma\times[1/3,2/3],2,0) \in \operatorname{End} V_k ( \Si) .$$ In particular, with $\mu = \phi^{-1} ( S^1 \times \{1 \})$ and $\la = \phi^{-1} ( \{1 \} \times S^1 )$ we obtain two operators 
$$Z_k (\mu), Z_k ( \la) \in \End(V_k ( \Si)) .$$
Finally for any $g \in \sldeux$, consider the mapping cylinder $M_g = S^1 \times S^1  \times [0,1]$ where the boundary $- (S^1 \times S^1) \cup   S^1 \times S^1$ is identified with $- \Si \cup \Si$ through $\phi \cup \phi \circ g$.  We set  
$$ Z_k (g) := Z_k ( M_g, \emptyset, \emptyset, 0 ) \in \operatorname{End} (V_k( \Si) ) $$
Consider the set of generators of $\sldeux$ 
\begin{gather}  \label{eq:def_T_S}
T=\begin{pmatrix} 1&1\\ 0&1\end{pmatrix}, \qquad S=\begin{pmatrix} 0&-1\\ 1&0\end{pmatrix}.
\end{gather}

\begin{theo}[\cite{bhmv}]  \label{basetqft}
The family $(e_\ell)_{ \ell =1 , \ldots , k-1}$ is an orthonormal basis of $V_k( \Si)$. For any $\ell = 1, \ldots , k-1$,
$$Z_k(\mu)e_\ell=-2\cos \Bigl( \frac{\pi \ell}{k} \Bigr) e_\ell,\qquad Z_k(\la)e_\ell=- (e_{\ell-1}+ e_{\ell+1})$$
where $e_0=e_k=0$. Furthermore there exists two integers $n$ and $n'$ which does not depend on $k$ such that 
$$Z_k(T)e_\ell= \tau_k^{n}  e^{\frac{i\pi(\ell^2-1)}{2k}}e_\ell,\qquad Z_k(S)e_\ell= \tau_k^{n'} \sqrt{\frac{2}{k}}\sum_{\ell'=0}^{k-1}\sin \Bigl( \frac{\pi \ell\ell'}{k} \Bigr) e_{\ell'}$$
where $\tau_k$ is given in (\ref{eq:def_tauk}).
\end{theo}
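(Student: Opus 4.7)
The plan is to derive every claim from the Kauffman-bracket skein theoretic construction of $V_k$ in \cite{bhmv}. Each identity reduces to a standard skein computation; the only subtlety is the bookkeeping of the anomaly factors $\tau_k^n$ arising from our use of Lagrangian labels (rather than $p_1$-structures) in combination with the composition formula (\ref{eq:comp_maslov}).

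The orthonormality of $(e_\ell)$ follows from functoriality applied to orientation reversal. By definition, $\langle e_\ell, e_{\ell'}\rangle$ is the $Z_k$-invariant of the closed 3-manifold obtained by gluing $-D^2\times S^1$ to $D^2\times S^1$ along $\Si$ via the identity, carrying the two colored cores. This manifold is $S^2\times S^1$ with two parallel colored circles, and a Dehn surgery presentation together with the skein-theoretic formula $Z_k(S^2\times S^1,\ell\cup\ell')=\delta_{\ell\ell'}$ gives the result. One checks that the Maslov correction in (\ref{eq:comp_maslov}) vanishes for this identity gluing, so no anomaly enters.

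For the curve operators, $Z_k(\mu)$ is obtained by stacking $\Si\times[0,1]$ carrying $\mu$ on top of $D^2\times S^1$, producing a solid torus in which $\mu$ bounds a disk; the encirclement identity for an $\ell$-colored Jones--Wenzl strand inside a color-$2$ loop gives the scalar $-2\cos(\pi\ell/k)$. The operator $Z_k(\la)$ is computed similarly, except $\la$ is parallel to the core, and the cabling together with the fusion rule $f_{\ell-1}\otimes f_1\cong f_{\ell-2}\oplus f_\ell$ and the Kauffman sign convention yields $-(e_{\ell-1}+e_{\ell+1})$, with the truncation $e_0=e_k=0$ imposed by the level-$k$ cutoff. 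Again the cobordism is a cylinder, so no $\tau_k$ factor appears.

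For the modular generators, $Z_k(T)$ acts on $e_\ell$ by the twist coefficient of the $\ell$-colored Jones--Wenzl idempotent; collecting the $\ell$-dependence yields $e^{i\pi(\ell^2-1)/(2k)}$, while the $\ell$-independent sign and framing constants are absorbed into $\tau_k^n$. The operator $Z_k(S)$ is derived from a surgery presentation of the mapping cylinder of $S$, and the standard skein evaluation of the Hopf link matrix produces $\sqrt{2/k}\sin(\pi\ell\ell'/k)$ up to an $\ell$-independent factor $\tau_k^{n'}$. The only delicate point is tracking $n,n'$: they are determined by Maslov indices and the signature of the surgery link relative to the Lagrangian $\nu$, and since these quantities are topological invariants of the underlying cobordisms they are manifestly independent of the level $k$, which is all the theorem asserts. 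Explicit values can be extracted from \cite{gm} but are not needed for our asymptotic applications.
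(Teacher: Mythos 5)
Your sketch is correct and is essentially the argument the paper points to: Theorem \ref{basetqft} is quoted from \cite{bhmv} without its proof, and the ingredients you list (orthonormality via the gluing giving $Z_k(S^2\times S^1,\ell\cup\ell')=\delta_{\ell\ell'}$ with vanishing Maslov correction, the encirclement and twist coefficients, the Hopf-link evaluation for $S$, and the fact that the residual anomaly exponents are Maslov/signature data and hence independent of $k$) are exactly those of the skein-theoretic derivation in that reference. The one point to tighten is the sign bookkeeping for the curve operators: the fusion rule by itself gives $+(e_{\ell-1}+e_{\ell+1})$ in the Jones--Wenzl basis, and the minus sign in the stated formula comes from the normalization of Section \ref{sec:jones}, where colors enter through the polynomials $T_\ell$, equal to $(-1)^{\ell-1}$ times the Jones--Wenzl cablings, so the appeal to ``the Kauffman sign convention'' should be replaced by this explicit comparison of normalizations.
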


To compare later with the geometric construction, observe that the spaces $V_k( \Si, \nu)$ do not depend on $\nu$ in the following sense: for any two Lagrangians $\nu_0$ and $\nu_1$, we have a family of isomorphisms 
$$  Z_k(\Sigma\times[0,1],\emptyset,\emptyset,n) : V_k( \Si, \nu_0) \rightarrow V_k( \Si, \nu_1) , \qquad n \in \Z$$
We could consider only $n=0$ but the identifications between three spaces $V_k( \Si, \nu_i), i =0,1,2$, defined in this way would not be mutually compatible because of Equation (\ref{eq:comp_maslov}).

\subsection{Geometric quantization of the torus}  \label{sec:torus-quantization}

The geometric quantization of the torus can be made very explicit by using Theta series and representation of the finite Heisenberg group. A good reference for this material is \cite{mumford}, chapter I.3. Here we follow closely \cite{l2}. The only original point in our treatment is the use of half-form lines.

Consider a real two dimensional symplectic vector space $(E, \om)$. Let $\al \in
\Om^1 (E, \C)$ be given by $\al_x(y) = \frac{1}{2} \om (x,y)$. Denote by $L$
the trivial complex line bundle over $E$ endowed with the connection
$d + \frac{1}{i} \al$.  

Let $k$ be any positive integer called the
level. Choose a linear complex structure $j$ of $E$ compatible with $\om$. 
Then the $k$-th power
of $L$ has a unique holomorphic structure compatible with the
connection. The holomorphic sections of $L^k$ are the sections satisfying the
Cauchy-Riemann equation 
$$ Z.\Psi + \frac{k}{i} \al (Z) \Psi = 0$$
for any anti-holomorphic vector $Z$ of $E$. 

Let $K_j$ be the canonical line of $(E,j)$, $K_j = \{ \al \in E^* \otimes \C / \al ( j \cdot) = i \al \}$.  Choose a half-form line of $(E, j)$, that is a complex line $\delta$ together with an isomorphism $\varphi : \delta ^{\otimes 2} \rightarrow K_j$. The line $K_j$ has a natural scalar product such that the square of the norm of $\al$ is $i \al \wedge \bar{\al} / \om$. We endow $\delta$ with the scalar product $\langle \cdot, \cdot \rangle_\delta$ making $\varphi$ an isometry. In the following we denote also by $\delta$ the trivial holomorphic Hermitian line bundle with base $E$ and fiber $\delta$.

The Heisenberg group at level $k$ is $E \times U(1)$ with
the product
\begin{gather} \label{eq:prod_Heisenberg} 
 (x,u).(y,v) = \Bigl( x+y, uv \exp \Bigl( \frac{ik}{2} \om (x,y)\Bigr)\Bigr).
\end{gather}
The same formula where $(y,v) \in L^k$ defines an action of the Heisenberg group on $L^k$. We let the Heisenberg group act trivially on $\delta$. For any $x\in E$, we denote by $T_x^*$ the pull-back by the action of $(x, 1)$, explicitly 
\begin{gather} \label{eq:pull_back}
 (T_x^* \Psi )(y) = \exp \Bigl( - i\frac{k}{2} \om (x, y) \Bigr) \Psi ( x+ y)
\end{gather}
for any section $\Psi$ of  $L^k \otimes \delta$.

Let $R$ be a lattice of $E$ with volume $4\pi$. Then $ R \times \{ 1
 \}$ is a subgroup of the Heisenberg group at level $k$. The space $\Hilb_k(j, \delta) $
 of $R$-invariant holomorphic section of $L^k \otimes \delta$ has dimension $2k$. It
 has to be considered has the space of holomorphic sections of the line
 bundle $ L^k \otimes \delta/R\times \{ 1\}$ over the torus $  E/R$. We define the scalar product of
 $\Psi_1, \Psi_2 \in \Hilb_k (j, \delta)$ by 
\begin{gather} \label{eq:scalar_product}
 \langle \Psi_1 , \Psi_2 \rangle  = \int_{ D} \langle \Psi_1 (x), \Psi_2 (x) \rangle_{\delta} \; |\om | (x)
\end{gather}
where $D$ is any fundamental domain of $R$.

The commutator subgroup of $R\times \{1 \}$ in the Heisenberg group at level $k$
is $\frac{1}{2k} R \times U(1)$. It acts on $\Hilb_k(j, \delta)$. This representation has canonical basis. 

\begin{theo} \label{sec:rep_Heisenberg}
Let $(\mu, \la)$ be a basis of $R$ such that $\om (\mu , \lambda) = 4 \pi$. Let $\Om_\mu$ be a vector of $\delta$ such that $\varphi(\Om_\mu^{\otimes 2}) (\mu) =1$. Then there exists a unique orthonormal basis $(\Psi_{\ell})_{\ell \in \Z / 2k \Z }$ of $\Hilb_k(j, \delta)$ such that
\begin{gather} \label{eq:rep_heis}
 T_{\mu/2k}^* \Psi_\ell = e^ {i \ell  \frac{\pi}{k}} \Psi _\ell , \qquad T^*_{\lambda / 2 k }
\Psi_\ell = \Psi_{\ell +1 }.
\end{gather}
and
\begin{gather} \label{eq:normalisation}
\Psi_0 ( 0 ) = \Bigl( \frac{k}{2 \pi} \Bigr) ^{1/4} \Om_\mu \sum_{n \in \Z} e^{2 i \pi k n^2 \tau}
\end{gather} 
where  $\tau = \al + i \be$, with $\al$ and $\be$ determined by the condition $\lambda = \alpha \mu + \beta j \mu$.
\end{theo}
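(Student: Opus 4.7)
The plan is to exploit the action of the finite Heisenberg group on $\Hilb_k(j,\delta)$: the spectral decomposition of $T^*_{\mu/2k}$ will produce a basis uniquely up to a global phase, and that last phase will be fixed by the explicit theta series value (\ref{eq:normalisation}).

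First I would establish the Heisenberg commutation relation. A direct computation from (\ref{eq:pull_back}) gives $T^*_x\circ T^*_y = e^{ik\omega(x,y)/2}\,T^*_{x+y}$; specializing to $x=\mu/2k$, $y=\lambda/2k$ with $\omega(\mu,\lambda)=4\pi$ yields
$$ T^*_{\mu/2k}\,T^*_{\lambda/2k} = e^{i\pi/k}\,T^*_{\lambda/2k}\,T^*_{\mu/2k}. $$
Iteration gives $(T^*_{\mu/2k})^{2k}=T^*_\mu$ and $(T^*_{\lambda/2k})^{2k}=T^*_\lambda$, which act as the identity on $R$-invariant sections; as both operators are unitary for the scalar product (\ref{eq:scalar_product}), their spectra lie in $\{e^{i\ell\pi/k}: \ell\in\Z/2k\Z\}$. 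The commutation relation then implies that $T^*_{\lambda/2k}$ sends the $e^{i\ell\pi/k}$-eigenspace of $T^*_{\mu/2k}$ isomorphically onto the $e^{i(\ell+1)\pi/k}$-eigenspace, so these $2k$ eigenspaces have the same dimension; combined with $\dim\Hilb_k(j,\delta)=2k$ each must be a line. Picking a unit eigenvector $\Psi_0$ of eigenvalue $1$ and setting $\Psi_\ell := (T^*_{\lambda/2k})^\ell \Psi_0$ gives an orthonormal basis obeying (\ref{eq:rep_heis}), unique up to a single global unit scalar.

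The remaining task, and the main technical point, is to pin down that scalar by matching (\ref{eq:normalisation}). I would realize $\Psi_0$ explicitly as a theta function: choosing a Heisenberg-equivariant holomorphic frame of $L^k\otimes\delta$ based at $0$ and using coordinates $(u,v)$ dual to $(\mu,\lambda)$, the joint conditions of $R$-invariance and $T^*_{\mu/2k}$-invariance translate into the classical quasi-periodicity that forces $\Psi_0$ to be a scalar multiple of $\Omega_\mu\sum_{n\in\Z}\exp(2i\pi k n^2 \tau + 2i\pi k n v)$. At $z=0$ the linear term in $v$ disappears, leaving the sum in (\ref{eq:normalisation}). The main calculation is the $L^2$-norm of this candidate over a fundamental domain of $R$: the squared norm expands as a double sum, the $u$-integration kills the off-diagonal terms by Fourier orthogonality, and the $v$-integration reduces to a Gaussian whose square root produces exactly the prefactor $(k/2\pi)^{1/4}$. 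Requiring this unit-norm normalization thus rigidifies the phase of $\Psi_0$ and hence of the whole family $(\Psi_\ell)$, proving existence and uniqueness at once.
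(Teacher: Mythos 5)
Your proposal is correct, and its essential ingredients --- the commutation relation $T^*_{\mu/2k}T^*_{\lambda/2k}=e^{i\pi/k}T^*_{\lambda/2k}T^*_{\mu/2k}$, the explicit theta series, the norm computation, and the fact that $\dim\Hilb_k(j,\delta)=2k$ --- are the same as in the paper, but you organize them differently. The paper is purely constructive: it writes $\Psi_0=(k/2\pi)^{1/4}\,\Om_\mu\otimes\sum_n T^*_{n\la}t^k$ with $t=\exp(2i\pi(p+\tau q)q)$, checks $T^*_{\mu/2k}\Psi_0=\Psi_0$ and $\|\Psi_0\|=1$ (quoting Theorem 5.1 of \cite{l2} for the norm of the theta series), defines $\Psi_\ell=(T^*_{\la/2k})^\ell\Psi_0$, and gets the basis property from the $2k$ distinct eigenvalues together with $\dim\Hilb_k(j,\delta)=2k$. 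You instead first run the abstract finite-Heisenberg argument ($(T^*_{\mu/2k})^{2k}=T^*_{\mu}=\id$ on $R$-invariant sections, the ladder action of $T^*_{\la/2k}$ on eigenspaces, hence each eigenspace is a line), which makes existence and uniqueness up to one global phase transparent, and you use the explicit theta series only to fix that phase via (\ref{eq:normalisation}); you also redo the norm by Fourier orthogonality in the $\mu$-direction plus a Gaussian integral instead of citing \cite{l2}, which is indeed how that norm is computed. Your route buys a cleaner uniqueness statement; its cost is that you must reproduce the explicit construction anyway for the normalization. Three small points to tighten: in the frame $t^k$ the quasi-periodic series should read $\sum_n\exp(4i\pi k n(p+\tau q)+2i\pi k n^2\tau)$, so your exponent is off by the choice of frame (its value at $0$ is nevertheless the right one); fixing the phase by (\ref{eq:normalisation}) uses that $\sum_n e^{2i\pi k n^2\tau}\neq 0$, which follows from the Jacobi triple product since $\be>0$; and the prefactor $(k/2\pi)^{1/4}$ only comes out after including the half-form factor $\|\Om_\mu\|_\delta^2=(\be/2\pi)^{1/2}$ in the norm computation, so the $\be$-dependence cancels.
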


The result is standard, except for the normalization with half-form. Observe that $\Om_\mu$ is determined up to a sign by the condition $\varphi(\Om_\mu^{\otimes 2}) (\mu) =1$. So for any positively oriented basis $( \mu, \la)$ of $R$, there exists exactly two basis satisfying the conditions of Theorem \ref{sec:rep_Heisenberg}. 
\begin{proof} 
The vectors $\Psi_\ell$ may be explicitly computed with theta series as follows. Let $p,q: E \rightarrow \R$ be the linear coordinates dual to $\mu, \lambda$. Define $\tau$ as in the statement. Then $p + \tau q$ is a holomorphic coordinate of $(E,j)$. Furthermore 
\begin{gather} \label{eq:def_t} 
t := \exp( 2 i \pi (p + \tau q)q)
\end{gather}
is a holomorphic section of $L$. It verifies $T^*_{\mu /  2k} t^k = t^k$. Consider the series
\begin{xalignat}{2} \label{eq:ground_state}
 \Psi_0 = & \Bigl( \frac{k}{2 \pi} \Bigr)^{1/4}  \Om_\mu \otimes \sum_{n \in \Z} T^*_{n \lambda } t^k  \\ = & \Bigl( \frac{k}{2 \pi} \Bigr)^{1/4}  \Theta(p + \tau q, \tau) \;\Om_\mu \otimes t^k  \label{eq:ground_state_2}
\end{xalignat}
where $\Theta$ is the theta function 
\begin{gather} \label{eq:theta}
\Theta (z, \tau) = \sum_{n \in \Z} e^{ 4 i \pi  z   k n + 2 i \pi k n^2 \tau}.
\end{gather}
This series converges because $\tau$ has a positive imaginary part.  Indeed $\om ( \mu , \la ) = 4 \pi$ implies that $\be = 4 \pi / \om ( \mu, j \mu)$ which is positive.
Using again that $\om ( \mu, \lambda) = 4 \pi$, we deduce from (\ref{eq:pull_back}) that 
\begin{gather} \label{eq:commutation} T^*_{\mu /2k} T^*_{\lambda/2k} = e^{i \pi /k} T^*_{\lambda /2k} T^*_{\mu /2k}.
\end{gather}
Hence $\Psi_0$ belongs to $\Hilb_k(j, \delta)$ and satisfies $T^*_{\mu/2k} \Psi_0 = \Psi_0$. Furthermore $\Psi_0$ has norm 1. Indeed, $\|\Om_\mu ^2 \|^2 = \be/2 \pi$ and a standard computation shows that the norm of $\Theta ( p + \tau q, \tau) t^k$ is $(2 \pi ) ^{1/2} k^{-1/4} \beta ^{- 1 /4} $ (cf. as instance Theorem 5.1 of \cite{l2}).  
 Once $\Psi_0$ is known, the $\Psi_\ell$ for $\ell \neq 0$ are determined by the second equation of (\ref{eq:rep_heis}) and the first equation of (\ref{eq:rep_heis}) follows from (\ref{eq:commutation}). That the dimension of $\Hilb_k( j , \delta)$ is $2k$ is a well-known fact, so the $\Psi_\ell$'s form a basis.  
\end{proof}

We can identify the various Hilbert spaces $\Hilb_k(j, \delta)$ obtained by varying the complex structure and half-form line as follows. Denote by $\Psi_0 ( \mu,\lambda, j, \delta)$ the vector in the line $\Hilb_k ( j, \delta) \cap \ker ( T^*_{\mu/ 2k} - \id)$ satisfying (\ref{eq:normalisation}). $\Om_\mu$ being only defined up to a sign, the same holds for  $\Psi_0 ( \mu,\lambda, j, \delta)$. 
 Given two pairs $(j_1, \delta_1)$ and $(j_2 , \delta_2)$, there exists by Theorem  \ref{sec:rep_Heisenberg} a unitary map $U$  from $ \Hilb_k(j_1, \delta_1)$ to $\Hilb_k(j_2, \delta_2)$ unique up to a phase which is a morphism of the Heisenberg group representation. We can assume that for some basis $(\mu, \lambda)$ of $R$, $$U(\Psi_0 ( \mu, \lambda ,j_1, \delta_1)  ) = \pm \Psi_0 ( \mu,\lambda, j_2, \delta_2).$$ Such a $U$ is unique up to a sign. The important point is that the unitary map defined by this condition does not depend on the choice of the basis $(\mu, \lambda)$.

\begin{theo} \label{sec:geo_isomorphism}
For any positively oriented basis $(\mu' , \lambda')$ of $R$,  $$U(\Psi_0 ( \mu', \lambda', j_1, \delta_1)  ) = \pm \Psi_0 ( \mu', \lambda', j_2, \delta_2) .$$  
\end{theo}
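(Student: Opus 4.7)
The plan is to combine the Heisenberg intertwining property of $U$ with the generation of $\sldeux$ by two elementary matrices, reducing the claim to two concrete verifications.

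First, since $U$ intertwines the Heisenberg representations on $\Hilb_k(j_1,\delta_1)$ and $\Hilb_k(j_2,\delta_2)$, and by Theorem \ref{sec:rep_Heisenberg} each subspace $\ker(T^*_{\mu'/2k}-\id)$ is one-dimensional, there exists $c(\mu',\lambda')\in U(1)$ with
\[
U(\Psi_0(\mu',\lambda',j_1,\delta_1)) = c(\mu',\lambda')\,\Psi_0(\mu',\lambda',j_2,\delta_2).
\]
We must show $c=\pm 1$. The positively oriented bases of $R$ with $\om(\mu',\lambda')=4\pi$ form a principal homogeneous space for $\sldeux$, a group generated by $S$ and $T$ of \eqref{eq:def_T_S}. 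It therefore suffices to check that $c=\pm 1$ is preserved under the two elementary moves $(\mu,\lambda)\mapsto(\mu,\mu+\lambda)$ (a $T$-type move) and $(\mu,\lambda)\mapsto(-\lambda,\mu)$ (an $S$-type move).

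For the first move, I would verify directly that $\Psi_0(\mu,\mu+\lambda,j,\delta)=\pm\Psi_0(\mu,\lambda,j,\delta)$ inside each Hilbert space. Indeed, writing $\mu+\lambda=(1+\alpha)\mu+\beta\,j\mu$, the new modular parameter is $\tau'=\tau+1$ while $\Omega_{\mu'}=\pm\Omega_\mu$, and $e^{2i\pi k n^2\tau'}=e^{2i\pi k n^2\tau}$ for $k,n\in\Z$. Both vectors then lie in the same one-dimensional $T^*_{\mu/2k}$-eigenspace and share the same value at the origin (up to the sign of $\Omega_\mu$) via \eqref{eq:normalisation}, and this value is nonzero by the classical non-vanishing of the theta null on the upper half-plane. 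The two vectors hence coincide up to sign, so $c(\mu,\mu+\lambda)=\pm 1$.

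For the second move, I expand
\[
\Psi_0(-\lambda,\mu,j,\delta) = \sum_\ell A_\ell(j,\delta)\,\Psi_\ell(\mu,\lambda,j,\delta).
\]
By \eqref{eq:rep_heis}, the eigenvector condition $T^*_{-\lambda/2k}\Psi_0(-\lambda,\mu,j,\delta)=\Psi_0(-\lambda,\mu,j,\delta)$ is a linear equation on $(A_\ell)$ whose coefficients do not depend on $(j,\delta)$; its one-dimensional solution space is therefore a fixed subspace of $\C^{2k}$, so $A_\ell(j,\delta)=\kappa(j,\delta)\,a_\ell$ with $a_\ell$ a $(j,\delta)$-independent sequence (explicitly $a_\ell\propto e^{i\pi\ell^2/(2k)}$). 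Applying $U$ to this expansion and using that $U\,\Psi_\ell(\mu,\lambda,j_1,\delta_1)=\pm\,\Psi_\ell(\mu,\lambda,j_2,\delta_2)$ with a uniform sign (a consequence of the intertwining and the hypothesis on $\Psi_0$) yields $c(-\lambda,\mu)=\pm\,\kappa(j_1,\delta_1)/\kappa(j_2,\delta_2)$.

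The main obstacle is to verify $\kappa(j_1,\delta_1)/\kappa(j_2,\delta_2)=\pm 1$, i.e.\ that $\kappa$ is, up to a global sign, independent of $(j,\delta)$. This is established by evaluating both sides of the expansion at the origin: the left-hand side is prescribed by \eqref{eq:normalisation} with $\mu'=-\lambda$ and involves $\Omega_{-\lambda}$ together with a theta series in the new modular parameter $\tau'=-1/\tau$, while the right-hand side is an explicit sum involving $\Omega_\mu$ and theta series in $\tau$. The equality then reduces to Jacobi's modular inversion $\Theta(0,-1/\tau)=(-i\tau)^{1/2}\Theta(0,\tau)$ combined with the relation between $\Omega_{-\lambda}$ and $\Omega_\mu$ as square roots inside $K_j$. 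The delicate point is that both the theta transformation and the passage between the two half-form vectors contribute phases depending on $(j,\delta)$; the content of the theorem is that these contributions cancel precisely, leaving only a sign ambiguity that is itself $(j,\delta)$-independent.
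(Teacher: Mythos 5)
Your argument is essentially the paper's own proof: reduce to the two generating moves, settle the $T$-move by comparing theta nulls inside the one-dimensional eigenline of $T^*_{\mu/2k}$, and settle the $S$-move by expanding the new $\Psi_0$ in the old basis $(\Psi_\ell)$ and pinning down the scalar by evaluating at the origin. Two concrete points need fixing before it is complete. First, the parenthetical ``explicitly $a_\ell\propto e^{i\pi\ell^2/(2k)}$'' is wrong: since $T^*_{-\lambda/2k}\Psi_\ell=\Psi_{\ell-1}$, the eigenvalue-$1$ condition forces the coefficients $A_\ell$ to be \emph{constant} in $\ell$ (the quadratic phase is the eigenvalue pattern of the $T$-move, not of this expansion); this slip is harmless for the structure of your argument, which only uses $(j,\delta)$-independence of $(a_\ell)$, but it would corrupt the evaluation at the origin if carried along. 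Second, the decisive cancellation is asserted (``the content of the theorem is that these contributions cancel'') rather than verified; it is, however, exactly the one-line computation in the paper's proof of (\ref{eq:trans_S}): any element of $K_j$ taking the value $1$ on $\mu$ takes the value $\tau$ on $\lambda$, so $\Omega_\mu^{2}=\tau\,\Omega_{\lambda}^{2}$, i.e.\ $\Omega_\mu=\pm\tau^{1/2}\Omega_{\lambda}$, while Poisson summation gives
$$(2k)^{-1/2}\sum_{n\in\Z}\exp\Bigl(\frac{i\pi\tau}{2k}n^2\Bigr)=\Bigl(\frac{i}{\tau}\Bigr)^{1/2}\sum_{n\in\Z}\exp\Bigl(-\frac{2i\pi k}{\tau}n^2\Bigr),$$
so the factor $\tau^{1/2}$ from the half-forms cancels against $(i/\tau)^{1/2}$ from the theta inversion and the proportionality constant $\kappa(j,\delta)$ is a fixed constant of modulus $(2k)^{-1/2}$, independent of $(j,\delta)$ up to sign. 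With the coefficient corrected and this line carried out, your outline coincides with the paper's argument.
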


\begin{proof} Since $\sldeux$ is generated by the matrices $T$ and $S$ defined in (\ref{eq:def_T_S}), it is sufficient to prove the result with $(\mu',\la')$ equal to $(\mu , \mu + \la)$ or $(\la, - \mu)$. Observe that the  parameter $\tau$ depends on $j$, $\mu$ and $\la$. We have that $$\tau ( j, \mu, \mu + \la) = \tau ( j, \mu,\la) +1 .$$
The theta function (\ref{eq:theta}) satisfies $\Theta ( 0, \tau +1 ) = \Theta ( 0, \tau)$. It follows that
\begin{gather} \label{eq:trans_T}
 \Psi_0 ( \mu, \mu + \la, j , \delta ) = \pm \Psi_0 ( \mu, \la, j , \delta )
\end{gather}
which shows the result for $(\mu' , \la') = ( \mu , \mu + \la)$.  

Let us prove that 
\begin{gather} \label{eq:trans_S}
 \Psi_0 ( \la, -\mu , j , \delta ) = \pm e^{-i \pi /4} (2k ) ^{-1/2} \sum_{\ell \in \Z/ 2 k \Z} T^*_{\ell \lambda /2k} \Psi_0 ( \mu, \la, j , \delta )
\end{gather}
By (\ref{eq:rep_heis}), $\sum_{\ell \in \Z/ 2 k \Z} T^*_{\ell \lambda /2k} \Psi_0 ( \mu, \la, j , \delta )$ is an eigenstate of $T^* _{\la /2 k}$ with eigenvalue 1. By (\ref{eq:def_t}) and (\ref{eq:pull_back}), its value at $0$ is:
\begin{gather*} 
\Bigl( \frac{k}{2 \pi} \Bigr)^{1/4}  \Om_\mu \sum_{n \in \Z} \exp \Bigl( \frac{i \pi \tau}{2 k } n^2 \Bigr) 
\end{gather*} 
with $\tau = \tau ( j , \mu, \la)$. 
To conclude we use that $\Om_\mu^2 = \tau \Om_\lambda^2$,  $\tau ( j, \la,  - \mu ) = -1/ \tau $ and the following identity 
$$ (2k ) ^{-\frac{1}{2}} \sum_{n \in \Z} \exp{\Bigl(  \frac{i\pi \tau}{2k} n^2 \Bigr) } = \Bigl( \frac{i}{\tau} \Bigr)^{1/2} \sum_{n \in \Z} \exp \Bigl( {- \frac{2i \pi k}{\tau} n^2} \Bigr). $$
which follows from Poisson summation formula. So 
$$     \Om_\mu \sum_{n \in \Z} \exp \Bigl( \frac{i \pi \tau}{2 k } n^2 \Bigr) = \pm (2k)^{\frac{1}{2}}e^{i \pi /4}    \Theta ( 0, - 1 /\tau)  \Om_{\la}
$$
and (\ref{eq:trans_S}) follows. Using that $U$ is a morphism of Heisenberg group representations, we obtain the final result for $(\mu' , \la') = (\la,  - \mu )$.    
\end{proof}

In the sequel we consider the subspace $\Hilb_k^{\alt} ( j, \delta)$ of $\Hilb_k (j , \delta)$ consisting of the sections $\Psi$ satisfying
$$ \Psi ( -x) = - \Psi (x ), \qquad \forall x \in E. $$
We call these sections the alternating sections. Introduce as previously a base $( \mu, \la)$ of $R$ and the associated orthonormal basis $( \Psi_\ell)$. Then the family 
$$ \frac{1}{\sqrt 2} \bigl( \Psi_{\ell} - \Psi_{-\ell} \bigr), \qquad \ell = 1, \ldots , k-1 $$ 
is an orthonormal  basis of $\Hilb_k ^{\alt} ( j , \delta)$. Observe furthermore that the unitary map $U$ introduced above restricts to a unitary map between $\Hilb_k^{\alt}  (j_1, \delta_1)$ and $\Hilb_k ^{\alt} ( j_2, \delta_2)$. 

\subsection{Equivalence between the geometric and topological constructions} \label{sec:equiv}

Let $\Sigma$ be a closed surface with genus 1. On one hand, choosing a Lagrangian subspace $\nu$ in $H^1( \Si)$ we define the Hermitian space $V_k ( \Sigma, \nu)$. On the other hand, consider the vector space $E = H_1 ( \Si, \R)$ endowed with the symplectic product 
\begin{gather} \label{eq:prod-symp-E} 
 \om (x,y) = 4 \pi( x \cdot y) 
\end{gather}
where the dot stands for the intersection product.  Then $R = H_1 ( \Si, \Z)$ is a lattice of $E$ with volume $4 \pi$.  Introduce a linear complex structure $j$ on $E$ together with a half-form line $\delta$ and define $\Hilb_k ( j, \delta)$ as in Section \ref{sec:torus-quantization}. 

The spaces $V_k ( \Sigma, \nu)$ and $\Hilb_k ^{\alt}( j, \delta)$ have the same dimension. Let us define a preferred class of isomorphisms between them. Let $\phi$ be a diffeomorphism between $\Sigma$ and the standard torus $S^1 \times S^1$. Then on the topological side, by Theorem  \ref{basetqft}, we have an orthonormal basis $(e_\ell)_{\ell=1, \ldots , k-1}$ of $V_k ( \Sigma, \nu)$. 

On the geometric side, let $ \mu$ and $\la$ be the homology classes of  $\phi^{-1}( S^1 \times \{ 1 \})$ and $\phi^{-1}( \{ 1 \} \times S^1)$ respectively, so that $(\mu , \la )$ is a positively oriented basis of $R$. Choose one of the two associated basis $(\Psi_\ell)$ of $\Hilb_k ( j, \delta)$ given in  Theorem \ref{sec:rep_Heisenberg}. We define the isomorphisms $I_k$ from $V_k ( \Si, \nu)$ to $\Hilb_k ^{\alt}( j, \delta)$ by 
$$ I_k(e_\ell) = \frac{1}{\sqrt 2} \bigl( \Psi_\ell - \Psi_{-\ell} \bigr)$$ for $\ell = 1, \ldots, k-1$. 

\begin{theo} \label{sec:equiv-top-geom}
Let $\phi$ and $\phi'$ be two oriented diffeomorphisms $\Si \rightarrow S^1 \times S^1$. Denote by $I_k, I_k' : V_k ( \Si, \nu) \rightarrow \Hilb^{\alt}_k ( j , \delta)$, $k \in \Z_{>0}$ the associated isomorphisms. Then there exists two integers $n$ and $n'$ such that for any $k$, we have
$$  I_k'  =  e^{i\pi (\frac{n}{4}+\frac{n'}{2k})} I_k .$$
\end{theo}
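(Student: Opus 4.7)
The plan is to reduce the statement to a finite check on generators of $\sldeux$ and then read off the phase discrepancies on both sides.

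First I reduce to the case $\phi' = g \circ \phi$ with $g \in \sldeux$. Indeed, any two oriented diffeomorphisms $\Si \to S^1 \times S^1$ differ, up to isotopy, by post-composition with a linear automorphism of $S^1 \times S^1$, whose isotopy class is classified by an element $g$ of $\sldeux$ (the mapping class group of the torus). Both the topological basis $(e_\ell)$ and the geometric basis $(\Psi_\ell)$ depend only on the isotopy class of $\phi$---the former through the solid torus glued along $\phi$, the latter through the ordered basis $(\mu,\la) = (\phi^{-1}(S^1 \times \{1\}), \phi^{-1}(\{1\} \times S^1))$ of $R$. The conclusion is also multiplicative: phases of the form $e^{i\pi(n/4 + n'/2k)}$ are closed under multiplication, so if the theorem holds for two consecutive pairs with such phases it holds for their composition. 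Hence it suffices to prove the theorem when $g$ is equal to one of the two generators $T$ or $S$ from \eqref{eq:def_T_S}.

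For each generator, I compute both changes of basis explicitly and compare. On the topological side, $(e'_\ell)$ is expressed in terms of $(e_\ell)$ by the TQFT operator $Z_k(g)$ of Theorem \ref{basetqft} (up to a Maslov-type $\tau_k^n$ anomaly coming from \eqref{eq:comp_maslov}); in particular the $\ell$-dependent phases are $e^{i\pi(\ell^2-1)/(2k)}$ for $T$ and $\sqrt{2/k}\sin(\pi\ell\ell'/k)$ for $S$, both multiplied by a power of $\tau_k = e^{i\pi(3/4 - 3/(2k))}$. On the geometric side, $(\Psi'_\ell)$ is obtained by replacing $(\mu,\la)$ by $g(\mu,\la)$ while keeping $j$ and $\delta$ fixed; the transformation of the ground state is given by \eqref{eq:trans_T} when $g = T$ and by \eqref{eq:trans_S} when $g = S$, and the rest of the new basis is determined by the Heisenberg action \eqref{eq:rep_heis}. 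After applying the alternating symmetrization $I_k(e_\ell) = (\Psi_\ell - \Psi_{-\ell})/\sqrt 2$, one verifies that the $\ell$-dependent coefficients of the two transition matrices agree, so that the two isomorphisms differ by a global scalar independent of $\ell$ but possibly depending on $k$.

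The main obstacle is the bookkeeping: one must collect the anomaly factor $\tau_k^n$ from the topological side together with, on the geometric side, the $e^{-i\pi/4}$ and the factor $(2k)^{-1/2}$ produced by the Poisson summation identity used in \eqref{eq:trans_S}, the sign ambiguity in $\Om_\mu$, and the $e^{i\pi(\ell^2-1)/(2k)}$-type corrections arising from iterating $T^*_{\la/2k}$ on the theta-series expression of $\Psi_0$. The task is to check that all fixed-modulus phases combine into an eighth root of unity $e^{i\pi n/4}$ and all $k$-dependent phases into a factor $e^{i\pi n'/(2k)}$ with integers $n, n'$ independent of $k$. Once this verification is completed for $T$ and $S$, the general case follows by induction on the length of a word in $T, S$ representing $g$.
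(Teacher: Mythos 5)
Your proposal follows essentially the same route as the paper's proof: reduce to the generators $T$ and $S$ of $\sldeux$, use Theorem \ref{basetqft} for the change of the basis $(e_\ell)$ on the TQFT side, and use (\ref{eq:trans_T}), (\ref{eq:trans_S}) together with the Heisenberg action (\ref{eq:rep_heis}) to transform the $(\Psi_\ell)$ on the geometric side, then match the $\ell$-dependent coefficients. The only difference is that you leave the final comparison (namely $\Psi'_\ell = \pm e^{i\pi \ell^2/2k}\Psi_\ell$ for $T$ and the sine-kernel formula for $S$, after alternating symmetrization) as a bookkeeping exercise, which is precisely the computation the paper carries out.
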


\begin{proof} Since any oriented diffeomorphism of the torus $S^1 \times S^1$ is isotopic to a linear map and  $\sldeux$ is generated by $S$ and $T$, we can assume that $ \phi' \circ \phi^{-1} =g$ with $g=T$ or $S$. Let $(e_\ell)$ and $(e'_\ell)$ be the two basis of $V_k ( \Si, \nu)$ defined from $\phi$ and $\phi'$ respectively. Then  
$$e_\ell' = \tau_k^m Z_k (g) e_\ell , \qquad \ell =1 , \ldots , k-1 $$ 
where $Z_k (g)$ is the morphism associated to the mapping cylinder of $g$ as in Section \ref{tqft}. Here $m$ is an integer independent of $k$ coming from the composition rule (\ref{eq:comp_maslov}). The endomorphisms $Z_k(T)$ and $Z_k(S)$ are given explicitly in Theorem \ref{basetqft}. 

Let $(\mu, \la)$ and $(\mu' , \la')$ be the two basis of $R$ defined from $\phi$ and $\phi'$ respectively. If $g= T$, then  $(\mu' , \la')= (\mu + \la , \la)$. So by equation (\ref{eq:trans_T}) in the proof of Theorem  \ref{sec:geo_isomorphism}, we have that $ \Psi_0' = \varepsilon \Psi_0 $ with $\varepsilon \in \{ 1, -1 \} $. Consequently
\begin{gather*} \Psi_\ell'  =    (T^*_{(\la + \mu )/2k  })^\ell \Psi_0'  =  \varepsilon e^ {\frac{i \pi}{2 k} \ell^2 }  \Psi_\ell
\end{gather*}
and comparing with the formula of Theorem \ref{basetqft}, we conclude. For $g=S$, we have $(\mu' , \la') = (\la, -\mu )$. So we deduce from (\ref{eq:trans_S}) 
\begin{gather*}  \Psi_{\ell} ' = (T^*_{-\mu /2k })^\ell \Psi_0'  =  \ep e^{- i \pi /4} \frac{1}{\sqrt{2k}} \sum_{\ell' \in \Z / 2k \Z} e^{- \frac{i \pi}{k} \ell  \ell'}  \Psi_{\ell'}
\end{gather*}
with $\ep \in \{1 , -1\}$, so
$$ ( \Psi'_\ell - \Psi_{-\ell}')  =  \ep i  e^{- i \pi /4} \sqrt{\frac{2}{k}} \sum_{\ell'=1}^{k-1} \sin \Bigl( \frac{\pi\ell\ell'}{k} \Bigr) \bigl( \Psi_{\ell'} - \Psi_{-\ell'} \bigr) $$ 
This formula agrees with the formula for $Z_k(S)e_\ell$, which ends the proof. 

\end{proof}

For any curve $\gamma$ of $\Si$, we defined in Section \ref{tqft} an endomorphism $Z_k ( \gamma)$ on $V_k ( \Si)$. Orient $\ga$ and define
$$T_k( \gamma) = - (T_{\ga / 2k}^* + T_{ -\gamma/2k}^*)$$ 
which  preserves the subspace of alternating sections of $\Hilb_k$. 

\begin{prop} 
For any curve $\gamma$ of $\Si$, the isomorphisms $I_k$ intertwine $Z_k ( \ga)$ and $T_k( \gamma).$
\end{prop}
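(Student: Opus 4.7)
The plan is to verify the intertwining identity first for the two generators $\mu$ and $\la$ of the homology lattice $R$, and then extend to an arbitrary essential simple closed curve by a change-of-marking argument.

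For $\ga=\mu$, Theorem \ref{sec:rep_Heisenberg} gives $T^*_{\pm\mu/2k}\Psi_\ell=e^{\pm i\pi\ell/k}\Psi_\ell$, so $T_k(\mu)\Psi_\ell=-2\cos(\pi\ell/k)\Psi_\ell$. Since this eigenvalue is invariant under $\ell\mapsto-\ell$, the same scalar multiplies $\Psi_\ell-\Psi_{-\ell}$, matching the action of $Z_k(\mu)$ on $e_\ell$ given in Theorem \ref{basetqft}. For $\ga=\la$, the formula $T^*_{\pm\la/2k}\Psi_\ell=\Psi_{\ell\pm 1}$ yields, after a one-line computation,
$$T_k(\la)(\Psi_\ell-\Psi_{-\ell})=-(\Psi_{\ell+1}-\Psi_{-(\ell+1)})-(\Psi_{\ell-1}-\Psi_{-(\ell-1)}),$$
which is precisely $\sqrt{2}\,I_k(Z_k(\la)e_\ell)$. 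The boundary cases $\ell=1$ and $\ell=k-1$ are handled by the trivial identities $\Psi_0-\Psi_0=0=e_0$ and $\Psi_k-\Psi_{-k}=0=e_k$ (using $k\equiv-k\pmod{2k}$).

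For a general essential simple closed curve $\ga$, I would invoke the transitivity of the mapping class group on oriented primitive homology classes of $\Si$. Pick an oriented diffeomorphism $\phi':\Si\to S^1\times S^1$ such that $(\phi')^{-1}(S^1\times\{1\})$ is isotopic to $\ga$, and let $(e_\ell')$, $(\Psi_\ell')$, and $I_k'$ be the topological basis, geometric basis, and isomorphism attached to $\phi'$ as in Section \ref{sec:equiv}. By naturality of the TQFT construction, $Z_k(\ga)$ coincides with the meridian operator associated to the marking $\phi'$; on the geometric side, $[\ga]$ is exactly the first basis vector $\mu'$ used to define $(\Psi_\ell')$. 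The $\ga=\mu$ computation carried out in the primed coordinates thus gives $I_k'\circ Z_k(\ga)=T_k(\ga)\circ I_k'$, and Theorem \ref{sec:equiv-top-geom} says $I_k'$ and $I_k$ differ only by a global phase, so the intertwining descends to $I_k$.

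The only point requiring a little care is the identification in the reduction step, namely that the intrinsic topological curve operator $Z_k(\ga)$ matches the meridian operator of any marking $\phi'$ sending the first standard circle to $\ga$; this follows from the functorial axioms of the BHMV theory and is a matter of bookkeeping rather than a substantive obstacle. Orientation ambiguity in the choice of $\ga$ causes no trouble since both $Z_k(\ga)$ and $T_k(\ga)=-(T^*_{\ga/2k}+T^*_{-\ga/2k})$ are manifestly symmetric in $\ga\leftrightarrow-\ga$.
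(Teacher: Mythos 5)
Your proof is correct and follows essentially the same route as the paper: reduce to the case where $\ga$ is the meridian of the chosen marking via Theorem \ref{sec:equiv-top-geom} (the phase ambiguity of $I_k$), then compare the explicit actions on $(\Psi_\ell-\Psi_{-\ell})$ and $(e_\ell)$. The separate verification for $\ga=\la$ is correct but superfluous, since the change-of-marking step already handles every essential curve once the meridian case is done.
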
 

\begin{proof} 
By Theorem \ref{sec:equiv-top-geom}, we may choose a diffeomorphism $\phi:\Si\to S^1\times S^1$ such that $\varphi(\gamma)= S^1 \times \{ 1\} $ and use it to construct the isomorphisms $I_k$. We are reduced to compare $Z_k(\ga )$ and $T_k(\gamma)$.
  We have by Formula (\ref{eq:rep_heis}):
$$  T_k(\gamma)  ( \Psi_\ell - \Psi_{- \ell} ) = - 2\cos \Bigl(  \frac{\ell \pi}{k} \Bigr) ( \Psi_\ell - \Psi_{- \ell} ).$$ 
We recover the action of $Z_k ( \gamma)$ on the basis $(e_\ell)$.
\end{proof}

\section{Asymptotic properties of curve operators and basis vectors}\label{sec:asymptotic_curve_and_basis}

\subsection{Curve operators as Toeplitz operators} \label{sec:tore_Toeplitz_operators}

Consider as in Section \ref{sec:torus-quantization} a 2-dimensional symplectic vector space $(E, \om)$ with a lattice $R$ of volume $4 \pi$. Introduce a linear complex structure $j$ and a half-form line $\delta$ and let $\Hilb_k = \Hilb_{k} ( j,\delta)$. 
Let $\Hilb_k^2$ be the space of $R$-invariant sections of $L^k \otimes \delta$ which are locally of class $L^2$. This is a Hilbert space where the scalar product is defined by (\ref{eq:scalar_product}). Furthermore $\Hilb_k$ is a (closed) finite dimensional subspace of $\Hilb_k^2$. We let $\Pi_k$ be the orthogonal projector of $\Hilb^2_k$ onto $\Hilb_k$. If $f$ is a $R$-invariant function on $E$, we denote by $M(f)$ the multiplication operator by $f$ of $\Hilb_k^2$. 

A {\em Toeplitz operator} is a family $(T_k \in \End ( \Hilb_k), \; k \in \Z_{>0})$ of the form
$$ T_k =  \Pi_k M(f(\cdot, k ) ) + R_k : \Hilb_k \rightarrow \Hilb_k , \qquad k=1,2, \ldots $$
where 
\begin{itemize} 
\item[-] $(f( \cdot, k))_k$ is a sequence of $\Ci_R ( E )$ which admits an asymptotic expansion of the form $f_0 + k^{-1} f_1 + \cdots $ for the $\Ci$ topology, with coefficients $f_0, f_1, \ldots \in \Ci_R ( E)$. 
\item[-] the family $(R_k \in \End ( \Hilb_k), \; k \in \Z_{>0}) $ is a $O(k^{-\infty})$, i.e. for any $N$, there exists a positive $C_N$ such that for every $k$, $ \| R_k \| \leqslant C_N k^{-N}$. Here $\|\cdot \|$ is the uniform norm of operators. 
\end{itemize}
As a result, the coefficients $f_0$, $f_1$, \ldots are uniquely determined by the family $(T_k)_k$. We call the formal series $f_0 + \hbar f_1 + \hbar^2 f_2 +  \cdots $ the total symbol of $(T_k)$, $f_0$ the principal symbol of $(T_k)$ and $f_1 -\frac{1}{2} \Delta f_0$ the subprincipal symbol of $(T_k)$. As another basic result, the product $(S_k T_k)_k$ of two Toeplitz operators $(S_k)$ and $( T_k)$ is a Toeplitz operator whose principal and subprincipal symbol are 
$$ f_0 g_0 , \quad f_1 g_0 + f_0 g_1 + \frac{1}{2i} \{ f_0 ,  g_0 \} $$
where $f_0$, $f_1$ (resp. $g_0$, $g_1$) are the principal and subprincipal symbols of $(S_k)$ (resp. $(T_k)$).

\begin{theo} \label{sec:Toeplitz_operators}
For any vector $\nu \in R$, the sequence $(T^*_{\nu/k}: \Hilb_k \rightarrow \Hilb_k)_k$ is a Toeplitz operator with principal symbol $\si \in \Ci_R (E)$ given by $$\si( x)  = \exp ( i \om (x, \nu))$$ and vanishing subprincipal symbol.  
\end{theo}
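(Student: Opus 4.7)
The plan is to reduce, via the Heisenberg composition law, to the case where $\nu$ is one of the two generators of the lattice $R$, and then to carry out that case by a direct calculation in the holomorphic theta trivialization used in the proof of Theorem~\ref{sec:rep_Heisenberg}.

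First, applying \eqref{eq:prod_Heisenberg} to the pair $(\nu_1/k,1)$, $(\nu_2/k,1)$ one obtains the Weyl-type relation
\begin{gather*}
T^*_{\nu_1/k}\,T^*_{\nu_2/k} \;=\; \exp\!\left(\tfrac{i}{2k}\,\om(\nu_1,\nu_2)\right)\,T^*_{(\nu_1+\nu_2)/k}.
\end{gather*}
If the theorem is already known for $\nu_1$ and $\nu_2$, the product rule for Toeplitz operators recalled just before the theorem shows that the left hand side is a Toeplitz operator with principal symbol $e^{i\om(\cdot,\nu_1+\nu_2)}$ and subprincipal symbol $\tfrac{1}{2i}\{e^{i\om(\cdot,\nu_1)},e^{i\om(\cdot,\nu_2)}\}$, and a direct Poisson bracket computation (noting that the Hamiltonian vector field of $e^{i\om(\cdot,\nu)}$ is $-i\nu\,e^{i\om(\cdot,\nu)}$) identifies this subprincipal symbol with $\tfrac{i}{2}\om(\nu_1,\nu_2)\,e^{i\om(\cdot,\nu_1+\nu_2)}$. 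On the right hand side, expanding the scalar factor $\exp(\tfrac{i}{2k}\om(\nu_1,\nu_2))=1+\tfrac{i}{2k}\om(\nu_1,\nu_2)+O(k^{-2})$ produces exactly the same additional subprincipal contribution, so matching forces the subprincipal symbol of $T^*_{(\nu_1+\nu_2)/k}$ to vanish. Hence it is enough to verify the claim on the two generators $\mu,\la$ of $R$; the case of $-\nu$ then follows by unitarity.

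To treat $\nu=\mu$ (the case $\nu=\la$ being analogous), I follow the notation of Theorem~\ref{sec:rep_Heisenberg} and write an arbitrary $\Psi\in\Hilb_k$ as $\Psi=g(z)\,t^k\otimes\Om_\mu$, where $z=p+\tau q$, $t=\exp(2i\pi zq)$ and $g$ is entire. The translation $y\mapsto y+\mu/k$ shifts $p$ and $z$ by $1/k$ and fixes $q$, so formula \eqref{eq:pull_back} gives
\begin{gather*}
(T^*_{\mu/k}\Psi)(y) \;=\; e^{\frac{i}{2}\om(y,\mu)}\,\Psi(y+\mu/k).
\end{gather*}
Expanding $g(z+1/k)=\sum_{n\geq 0}(n!\,k^n)^{-1}g^{(n)}(z)$ and tracking the explicit transformation of $t^k$ yields an asymptotic expansion of $T^*_{\mu/k}\Psi$ in powers of $1/k$. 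The leading term simplifies to $e^{i\om(y,\mu)}\Psi(y)$, i.e.\ multiplication by $\si(x)=e^{i\om(x,\mu)}$, which equals $\Pi_k M(\si)\Psi$ modulo $\Hilb_k^{\perp}$; this identifies the principal symbol.

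It remains to interpret the higher-order terms as a Toeplitz operator and to check that the subprincipal symbol is zero. The $O(1/k)$ term involves $g'(z)$, a holomorphic derivative of the trivializing coefficient rather than a multiplication on $\Psi$. The cleanest way to extract its symbol is to test against the orthonormal basis $(\Psi_\ell)$ of Theorem~\ref{sec:rep_Heisenberg}: by \eqref{eq:rep_heis}, $T^*_{\mu/k}\Psi_\ell=e^{2i\pi\ell/k}\Psi_\ell$, while $\langle\Pi_k M(e^{i\om(\cdot,\mu)})\Psi_\ell,\Psi_{\ell'}\rangle$ can be computed from the explicit theta series \eqref{eq:ground_state_2} by a Gaussian integral. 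Expanding both to order $1/k$ and invoking the convention that the subprincipal symbol equals $f_1-\tfrac12\De f_0$ then forces $f_1=\tfrac12\De f_0$, hence vanishing subprincipal symbol. The main obstacle is precisely this last matching: the Laplacian correction $-\tfrac12\De\si$ built into the definition of the subprincipal symbol, together with the contribution of the half-form factor $\Om_\mu$, must balance exactly against the first-order Taylor term coming from $g'(z)$, and pinning this down requires the explicit Gaussian/theta-function calculation rather than any purely abstract argument.
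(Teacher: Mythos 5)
Your reduction to the lattice generators via the Heisenberg relation $T^*_{\nu_1/k}T^*_{\nu_2/k}=\exp(\frac{i}{2k}\om(\nu_1,\nu_2))T^*_{(\nu_1+\nu_2)/k}$ is sound (granted the sign convention for the Poisson bracket under which $\{e^{i\om(\cdot,\nu_1)},e^{i\om(\cdot,\nu_2)}\}=-\om(\nu_1,\nu_2)e^{i\om(\cdot,\nu_1+\nu_2)}$, which is what makes the scalar factor cancel the bracket term), but it does not touch the real difficulty, and your base case contains a genuine gap. First, the claimed leading term is wrong: writing $\Psi=g(z)\,t^k\otimes\Om_\mu$ with $z=p+\tau q$ and $t=\exp(2i\pi zq)$, the prefactor $e^{\frac{i}{2}\om(y,\mu)}=e^{-2i\pi q}$ cancels exactly against $t^k(y+\mu/k)=e^{2i\pi q}t^k(y)$, so that $T^*_{\mu/k}\Psi=g(z+1/k)\,t^k\otimes\Om_\mu$: in this trivialization the operator is a pure shift in the holomorphic coordinate, and nothing resembling multiplication by $e^{i\om(y,\mu)}=e^{-4i\pi q}$ appears pointwise. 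The symbol only emerges after composing with the projector $\Pi_k$, i.e.\ at the level of the Schwartz kernel, which is precisely what has to be computed. Second, the expansion $g(z+1/k)=\sum_n (n!\,k^n)^{-1}g^{(n)}(z)$ is not an asymptotic expansion in $k$: $g$ is the $k$-dependent coefficient of a section of $L^k$ (for the basis vectors it is a theta series with terms $e^{4i\pi zkn+\cdots}$), so each derivative costs a factor of order $k$ and all the ``terms'' are of comparable size; the scheme of reading off $f_0,f_1$ from this Taylor series collapses.

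Third, the step you yourself flag as the main obstacle --- identifying $T^*_{\mu/k}$ with $\Pi_k M(f(\cdot,k))+O(k^{-\infty})$ and checking $f_1=\frac12\De f_0$ --- is the entire content of the theorem and is not carried out; matching matrix elements $\langle\cdot\,\Psi_\ell,\Psi_{\ell'}\rangle$ to order $k^{-1}$ would not suffice anyway, since the Toeplitz property requires a full expansion with operator-norm control of the remainder, not agreement of finitely many orders. The paper's proof proceeds differently: it invokes the kernel characterization of Toeplitz operators (Theorem \ref{theo:schw-kern-toepl}), notes that the Schwartz kernel of $T^*_{\nu/k}$ is $\frac{k}{2\pi}T^*_{\nu/k}F^k$ with the translation acting on the first factor, computes $T^*_{\nu/k}t^k$ explicitly to put this kernel in the required form $\frac{k}{2\pi}F^k\,\si(\zeta_1,\bar\zeta_2)+O(k^{-\infty})$ near the diagonal for every $\nu\in R$ at once (so no reduction to generators is needed), and then applies the rule ``subprincipal symbol $=f_1-\frac12\De f_0$'' to the explicit $\si$, where the Gaussian correction of order $1/k$ is exactly killed by the Laplacian term. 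Some computation of this kind --- an exact kernel or stationary-phase argument replacing your deferred ``Gaussian/theta-function calculation'' --- is what your proposal is missing.
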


The result is certainly well-known to specialists. We provide a proof in the appendix. As a consequence the family of curve operators  $(T_k(\gamma): \Hilb_k \rightarrow \Hilb_k)_k$ is a Toeplitz operator with principal symbol 
$$ x \rightarrow - 2 \cos ( \om ( x, \ga /2)) $$
and vanishing subprincipal symbol.

\subsection{Basis vectors as Lagrangian states} \label{sec:lagrangian-states}

Consider a positively oriented basis $(\mu, \la )$ of $R$ and choose $\Om_\mu \in \delta$ such that   $\varphi(\Om_\mu^{\otimes 2}) (\mu) =1$. Then for any positive integer $k$, we have an orthonormal basis $(\Psi_\ell)$ of $\Hilb_k$ by Theorem \ref{sec:rep_Heisenberg}. As $k$ tends to infinity, these vectors concentrate on the fibers of the affine fibration of $E/R$ directed by $\mu$. More precisely, $\Psi_{\ell} $ is a Lagrangian state supported by the circle 
$$\{  x \in E ; \; \om(x, \mu) = 2 \pi \ell/ k \}/ (\mu \Z) =  \Bigl( -\frac{\ell }{2k} \lambda + \R \mu \Bigr) /  \mu \Z $$ 
This follows from general properties of Toeplitz operators because each $\Psi_\ell$ is an eigenstate of $T^*_{\mu/2k}$ with eigenvalue $\exp(i \ell \pi/k )$ and the symbol of $T^*_{\mu/2k}$ is $\exp ( i \om ( x,\mu)/2)$. But it is actually simpler to check this property directly from the construction of the $\Psi_\ell$ as we do in the following proposition.

\begin{prop} \label{sec:base_lagrangian_states}
For any $\delta \in (0,1)$, there exists a positive $C$ such that for any positive integer $k$  we have 
\begin{itemize} 
\item for all  $x \in \frac{1}{2}\la + \frac{1}{2}[-\delta, \delta ]\la + \R \mu$ modulo $R$,
$$\bigl| \Psi_0 ( x)  \bigl| \leqslant C e^{-k/C} .$$
\item for all  $x \in  [- \delta , \delta] \la  + \R \mu$, 
$$ \bigl| \Psi_0 ( x) - \Bigl( \frac{k}{2\pi} \Bigr)^{1/4} t^k (x) \otimes \Om_{\mu}  \bigl| \leqslant C e^{-k/C} $$ 
where $t$ is the holomorphic section of $L$ whose restriction to the line $ \R \mu $ is constant equal to 1.
\end{itemize}
\end{prop}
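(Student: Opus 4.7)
The plan is to exploit the explicit theta-series expression (\ref{eq:ground_state}) and estimate each summand $T^*_{n\la} t^k$ separately; since these are Gaussian peaks in the $\la$-direction of width $O(1/\sqrt{k})$ centered on $-n\la + \R\mu$, both assertions will follow from bounding the tails of this Gaussian sum. First, I introduce coordinates $p, q : E \to \R$ dual to $(\mu, \la)$, so that $x = p\mu + q\la$, and write $\tau = \al + i\be$ with $\be > 0$. From $t = \exp(2i\pi(p+\tau q)q)$ one reads off $|t(x)| = \exp(-2\pi\be q^2)$. Combining this with the pullback rule (\ref{eq:pull_back}) and $\om(\mu, \la) = 4\pi$ (so that $\om(n\la, p\mu+q\la) = -4\pi np$) gives the pointwise identity
$$ |(T^*_{n\la} t^k)(x)| = \exp\bigl(-2\pi k\be(q+n)^2\bigr), \qquad n \in \Z, $$
which is independent of $p$. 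Observe also that the $n=0$ summand in (\ref{eq:ground_state}) equals $(k/2\pi)^{1/4}\,t^k(x) \otimes \Omega_\mu$, which is exactly the main term appearing in the second assertion.

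For the second assertion, separating this $n=0$ term yields
$$ \Psi_0(x) - \bigl(\tfrac{k}{2\pi}\bigr)^{1/4} t^k(x) \otimes \Omega_\mu = \bigl(\tfrac{k}{2\pi}\bigr)^{1/4}\, \Omega_\mu \otimes \sum_{n \neq 0} (T^*_{n\la} t^k)(x). $$
For $x$ with $q \in [-\delta, \delta]$ and $\delta \in (0,1)$, every nonzero integer $n$ satisfies $|q+n| \geq |n| - \delta \geq 1 - \delta$, so the key estimate bounds the remainder in fiber norm by $(k/2\pi)^{1/4}\|\Omega_\mu\|\cdot 2\sum_{n \geq 1}\exp(-2\pi k\be(n-\delta)^2)$. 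A standard geometric-series argument controls this by $C\exp(-2\pi k \be(1-\delta)^2)$, and the polynomial prefactor is absorbed into a bound of the form $C e^{-k/C}$.

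For the first assertion, the $R$-invariance $T^*_\nu \Psi_0 = \Psi_0$ for $\nu \in R$, which is built into the definition of $\Hilb_k$, lets me reduce to $x = p\mu + q\la$ with $q \in [\tfrac{1-\delta}{2}, \tfrac{1+\delta}{2}]$. Since $\delta < 1$, every integer $n$ now satisfies $|q+n| \geq \tfrac{1-\delta}{2} > 0$, so each summand decays at rate at most $-2\pi k\be(\tfrac{1-\delta}{2})^2$; the same Gaussian summation yields $|\Psi_0(x)| \leq C e^{-k/C}$ uniformly in $p$.

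There is no serious obstacle here: the argument is entirely elementary once the key pointwise estimate for $|T^*_{n\la}t^k|$ is in hand. The formula $|t(x)| = \exp(-2\pi\be q^2)$ is moreover consistent with the $L^2$-normalization of $\Psi_0$ already checked in the proof of Theorem \ref{sec:rep_Heisenberg}. Uniformity of the bounds along the $\R\mu$-direction is automatic because the right-hand side of the key estimate depends only on $q$.
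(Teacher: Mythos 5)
Your proof is correct and follows essentially the same route as the paper: it works from the theta-series expression (\ref{eq:ground_state}), uses $|t(x)|=e^{-2\pi\beta q^2(x)}$ to get $|T^*_{n\la}t^k(x)|=e^{-2\pi k \beta (q(x)+n)^2}$, isolates the $n=0$ term as the leading term, and bounds the remaining Gaussian tail (after reducing modulo $R$ for the first assertion) by $Ce^{-k/C}$. No discrepancies to report.
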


\begin{proof} 
We start from equation (\ref{eq:ground_state}) defining  $\Psi_0$. The section $t$ given in (\ref{eq:def_t}) is holomorphic and its restriction to $\mu \R = \{ q =0 \}$ is constant equal to $1$. It is characterized by these conditions. By (\ref{eq:def_t}), we have  
$$   \bigl| t (x) \bigr| = e^{ -2 \pi \be q^2(x)   }, \qquad \forall x \in E $$
where $\be>0$ is the imaginary part of $\tau$ and $q$ the linear coordinate of $E$ such that $q(\la) =1$ and $q( \mu ) =0$.  We have 
$$  \bigl| T^*_{n\lambda} t (x) \bigr| =  e^{ -2 \pi  \be [q(x) +n ]^2 }$$
To conclude it suffices to prove that there exists $C$ such that for any real $s$ and any positive $k$, one has  
$$ |s | \leqslant \delta \quad \Rightarrow \quad \sum_{ n \in \Z, \; n \neq 0 } e ^{-k d  ( s - n ) ^2} \leqslant C e^{-k/C} $$
and 
$$  |s| \leqslant \frac{\delta}{2}  \quad \Rightarrow \quad \sum_{ n \in \Z  } e ^{-k d  ( s+\frac{1}{2}  - n ) ^2} \leqslant C e^{-k/C} $$
with $d = 2 \pi \beta$. 
\end{proof}

Using that $\Psi_{\ell} = T^*_{\ell \la / 2k} \Psi_0$, we deduce a uniform description of the $\Psi_\ell$'s. For instance, for any $\ell$, we have   
$$ \bigl| \Psi_\ell ( x) - \Bigl( \frac{k}{2\pi} \Bigr)^{1/4} T^*_{\ell \la / 2k } t^k (x) \otimes \Om_{\mu}  \bigl| \leqslant C e^{-k/C} $$ 
 for all $ x \in -\frac{\ell }{2k} \lambda + [- \delta, \delta] \lambda + \R \mu$. Furthermore the holomorphic section $T^*_{\ell \la / 2k } t^k$ is determined up to a phase by the condition that its restriction to $-\frac{\ell }{2k} \lambda + \R \mu$ is flat with a unitary pointwise norm equal to 1.

\subsection{Solid torus state}  \label{sec:solid-torus-state}

As a first application, we can describe the asymptotic behavior of 
$$ Z_k ( N) := Z_k ( N, \emptyset , \emptyset , 0) \in V_k (\Si, \nu ) $$ 
where $N$ is a solid torus with boundary identified with $\Si$. Denote by  $F$ the kernel of the natural map $ H_1(\Si) \rightarrow H_1 (N)$. As in Section \ref{sec:equiv}, we identify the space $V_k( \Si, \nu)$ with $\Hilb_k^{\alt} \subset \Hilb_k$, where $\Hilb_k$ is obtained by quantizing the torus $E/R$ with $E = H_1(\Si, \R)$ and $R =  H_1 ( \Si, \Z)$.
\begin{theo} \label{theo:solid-torus-state}
For any $x \in E \setminus ( F + R)$, there exists a neighborhood $U $ of $x$ in $E$ and a positive $C$ such that for any integer $k>0$ and for all $y \in U$  
$$ | Z_k(N) (y) | \leqslant C e^{-k/C} $$
 Furthermore there exists a neighborhood $V$ of $F$ and a positive $C$ such that for any $y \in V$ and positive $k$, 
$$ \Bigl| Z_k (N) (y) -   e^{\frac{i \pi}{4} m} \theta_k  \Bigl( \frac{k}{2 \pi} \Bigr)^{1/4} t^k(y) \otimes \sigma(y)  \Bigr| \leqslant C e^{-C/k} $$
where $m$ is an integer,  $t$ is the holomorphic section of $L$ which is equal to $1$ on $F$ and  $\si$ is the holomorphic section of $\delta$ whose restriction to $F$ is 
given by $$ \si ( s \gamma ) = \sqrt 2 \sin ( 2 \pi s ) \Om_{\gamma}, \qquad s \in \R $$ 
with $\gamma $ a generator of $R \cap F$.  Finally, $\theta_k $ is a complex sequence admitting an asymptotic expansion of the form $1+ a_1 k^{-1} + a_2 k^{-2} + \ldots$. 
\end{theo}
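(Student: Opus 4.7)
The plan is to use the isomorphism $I_k:V_k(\Si,\nu)\to\Hilb_k^{\alt}$ of Theorem \ref{sec:equiv-top-geom} to reduce the statement to an explicit computation with the basis $(\Psi_\ell)$ of $\Hilb_k$. I would choose an oriented diffeomorphism $\phi:\Si\to S^1\times S^1$ such that $\mu=\phi^{-1}(S^1\times\{1\})$ bounds a disk in $N$; then $F=\R\mu$, and the generator $\ga$ of $R\cap F$ appearing in the statement can be taken to be $\mu$. In the Kauffman bracket skein convention, the color $\ell=1$ corresponds to the Jones--Wenzl idempotent $f_0$, i.e.\ the empty skein, so $Z_k(N)=e_1$. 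Under one choice of $I_k$ one therefore has
$$I_k(Z_k(N))=\frac{1}{\sqrt 2}(\Psi_1-\Psi_{-1}),$$
and any other choice differs by a factor $e^{i\pi(n/4+n'/2k)}$ as allowed by Theorem \ref{sec:equiv-top-geom}. The problem thus reduces to the large-$k$ analysis of $\Psi_{\pm 1}=T^*_{\pm\la/2k}\Psi_0$.

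For the first assertion, I would apply Proposition \ref{sec:base_lagrangian_states} to see that $\Psi_0$ is exponentially small outside a thin neighborhood of $F+R=\R\mu+\Z\la$. Since $T^*_{\pm\la/2k}$ preserves pointwise moduli by (\ref{eq:pull_back}), the sections $\Psi_{\pm 1}$ are exponentially small outside neighborhoods of $\mp\la/(2k)+F+R$. For any $x\notin F+R$, a sufficiently small open neighborhood $U$ of $x$ is disjoint from $F+R$, and since the translates $\pm\la/(2k)+F+R$ converge to $F+R$, $U$ is also disjoint from these translates for $k\ge k_0$ with $k_0$ large enough. The exponential bound of Proposition \ref{sec:base_lagrangian_states} then yields $|Z_k(N)(y)|\le C e^{-k/C}$ on $U$ for $k\ge k_0$, while the finitely many smaller values of $k$ are absorbed into $C$ by enlarging it.

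For the second assertion, the uniform second estimate of Proposition \ref{sec:base_lagrangian_states} gives
$$\Psi_{\pm 1}(y)=\Bigl(\frac{k}{2\pi}\Bigr)^{1/4}T^*_{\pm\la/2k}t^k(y)\otimes\Om_\mu+O(e^{-k/C})$$
uniformly for $y$ in a tubular neighborhood of $F$. Let $p,q$ denote the linear coordinates dual to $(\mu,\la)$. A direct calculation using (\ref{eq:def_t}) and (\ref{eq:pull_back}) gives
$$T^*_{\pm\la/2k}t^k(y)=t^k(y)\,e^{\pm 2i\pi(p+\tau q)}\,e^{i\pi\tau/(2k)},$$
so that
$$\frac{1}{\sqrt 2}(\Psi_1-\Psi_{-1})(y)=i\,e^{i\pi\tau/(2k)}\,\Bigl(\frac{k}{2\pi}\Bigr)^{1/4}t^k(y)\otimes\sqrt 2\sin(2\pi(p+\tau q))\Om_\mu+O(e^{-k/C}).$$
Since $\sqrt 2\sin(2\pi(p+\tau q))\Om_\mu$ is the unique holomorphic extension of $s\mu\mapsto\sqrt 2\sin(2\pi s)\Om_\mu$ off $F$, it coincides with $\si(y)$. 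Combining with the $I_k$-indeterminacy and writing $i=e^{i\pi\cdot 2/4}$, we get the desired asymptotic with $m=n+2$ and $\theta_k=e^{i\pi(n'+\tau)/(2k)}$, which admits the expansion $1+i\pi(n'+\tau)/(2k)+O(k^{-2})$.

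The main delicate point is the bookkeeping of phases, in particular verifying that the $y$-dependence of the computed expression splits cleanly as $t^k(y)\otimes\si(y)$ times a sequence depending only on $k$. The explicit calculation above secures this, since $\tau$ depends only on $(j,\mu,\la)$. All other ingredients are direct applications of Proposition \ref{sec:base_lagrangian_states} and Theorem \ref{sec:rep_Heisenberg}, and I do not anticipate a substantive obstacle.
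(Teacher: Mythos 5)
Your proposal is correct and follows essentially the same route as the paper: choose $\phi$ so that $\mu$ generates $F\cap R$, identify $Z_k(N)$ with $\tfrac{1}{\sqrt 2}(\Psi_1-\Psi_{-1})$ via $I_k$ (using Theorem \ref{sec:equiv-top-geom} to absorb the change of $\phi$ into the phases $m$ and $\theta_k$), and then apply Proposition \ref{sec:base_lagrangian_states} together with the explicit computation $T^*_{\pm\la/2k}t^k=e^{\frac{i\pi\tau}{2k}\pm 2i\pi(p+\tau q)}t^k$, which is exactly the paper's argument. Your additional remarks (why color $1$ gives the empty skein so $Z_k(N)=e_1$, the absorption of finitely many small $k$ into the constant, and the normalization $\Om_\mu=\Om_\gamma$, which matches the statement and Proposition \ref{sec:base_lagrangian_states}) only make explicit points the paper leaves implicit.
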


\begin{proof} 
By Theorem \ref{sec:equiv-top-geom}, one can choose the diffeomorphism $\phi : \Si \rightarrow S^1 \times S^1$ entering in the definition of the isomorphisms $V_k ( \Si, \la) \rightarrow \Hilb_k^{\alt}$ in such a way that $\phi^{-1}( S^1 \times \{1\})$ is a generator of $F\cap R $.  Then introducing as previously the basis $(\Psi_\ell)$ associated to $( \mu, \la)$, we have  
$$Z_k(N) = \frac{1}{\sqrt {2}} ( \Psi_1 - \Psi_{-1})  .$$ 
By (\ref{eq:pull_back}) and (\ref{eq:def_t}), we have
$$ T^*_{\ell \la /2k } t^k  = e^{ \frac{i \pi }{2k} \tau \ell ^2 + 2 i \pi \ell ( p + \tau  q)  } t^k $$ 
with $p$, $q$ the linear coordinates of $E$ associated to $\mu$, $\la$. So by Proposition \ref{sec:base_lagrangian_states} we have on a neighborhood of $F$ the following inequality
$$ \Bigl|  (\Psi_{1}  - \Psi_{-1}) - \Bigl( \frac{k}{2 \pi} \Bigr)^{1/4} t^k \otimes \sigma(k) \Bigr| \leqslant C k^{-C/k} $$
where $\sigma(k)= 2i e^{ \frac{i \pi}{2k } \tau} \sin ( 2 \pi ( p + \tau q)) \Om_\la.$ 
\end{proof}

\section{Knot state}

\subsection{Jones polynomials}\label{sec:jones}

Let $M$ be a compact oriented 3-manifold. We will call banded link in $M$ the embedding of a finite disjoint union of copies of $S^1\times [0,1]$. We denote by $S(M,t)$ the $\C[t^{\pm 1}]$-module generated by isotopy classes of banded links modulo the Kauffman relations shown if Figure \ref{fig:kauffman}.

\ifpdf

\begin{figure}[width=8cm,height=3cm]
\begin{center}
\begin{pspicture}(-2,0)(3,3)
\includegraphics{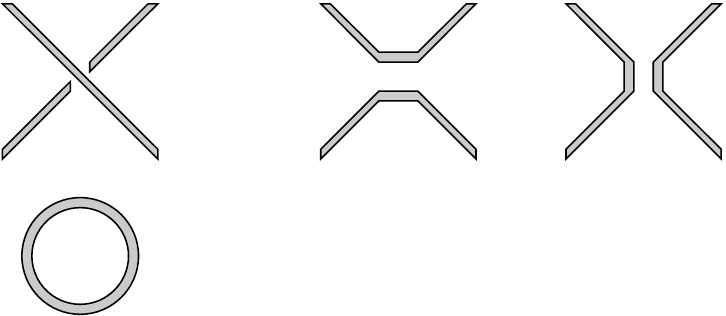}
\put(-5.2,2.3){$=A$}
\rput(-1.9,2.5){$+A^{-1}$}
\rput(-4,0.7){$=(-A^2-A^{-2})\quad \emptyset$}
\end{pspicture}
\caption{Kauffman bracket skein relations}\label{fig:kauffman}
\end{center}
\end{figure}

\fi

It is a consequence of Reidemeister theorem that $S(S^3,t)\simeq \C[t^{\pm 1}]$ via a map
$\langle\cdot\rangle$ called Kauffman Bracket, normalized in such a
way that the bracket of the empty knot is 1. For a banded knot $K$ in $S^3$, $\langle K\rangle$ is a version of the Jones polynomial.
More generally, consider the sequence of polynomials defined by the recursion relation 
$$T_0=0\quad T_1=1 \quad T_{\ell+1}+xT_\ell+T_{\ell-1}=0.$$
Given a banded knot $K$ in a 3-manifold $M$, one can cut the annulus into $\ell$ parallel strands. We denote this banded link by $K^\ell$. For any polynomial $P=\sum_i a_i x^i$ we set $P(K)=\sum_i a_i K^i \in S(M,t)$. Using this construction, we define the $\ell$-th colored Jones polynomial of a knot in $S^3$ by 
$$ J_\ell^K=\langle T_\ell(K)\rangle \in\C[t^{\pm 1}].$$
This sequence satisfies $J_{-\ell}^K=-J_\ell^K$ and for any positive integer $k$ and $\ell \in \Z$
$$ J_{\ell+2k}^K( -e^{i \pi /2k} ) =  J_{\ell}^K( -e^{i \pi /2k} )$$
There are many distinct normalizations and this one is not the most familiar. The most standard normalization $\tilde{J}_\ell^K$ is related to this one by the formula
\begin{equation}\label{eq:normalization}
J_\ell^K(t)=\tilde{J}_\ell^K(t^{-4})\frac{t^{2\ell}-t^{-2\ell}}{t^2-t^{-2}}.
\end{equation}
An exhaustive list of values of $\tilde{J}_\ell^K$ for small $\ell$ and small $K$ is available in the Knot Atlas at {\tt http://katlas.math.toronto.edu/wiki/Main\_Page}. Unfortunately, except for very few knots $K$, we do not have a closed formula for $J_\ell^K$ taking into account all values of $\ell$. 

The Jones polynomials are relevant in this article because of the formula
\begin{gather} \label{eq:jones}
Z_k(S^3,K,\ell,0)=\sqrt{\frac{2}{k}}   \sin \Bigl( \frac{\pi}{k} \Bigr)  J_\ell^K (- e^{i\pi/2k})
\end{gather}
This is merely a definition in the construction of TQFT given in \cite{bhmv}.

\subsection{Knot state} \label{sec:knot-state}

Consider a banded knot $K$ in $S^3$. Let $N_K$ be a closed tubular neighborhood of
$K$, $\Sigma$ be the
oriented boundary of $N_K$ and $E_K$ be the complement of the interior of $N_K$ in $S^3$. 
Let $\mu, \la$ be the isotopy classes
of oriented simple
closed curves in
the peripheral torus $\Si$ such that the linking number of $\la$ and $K$ is zero in $S^3$, $\mu$ bounds a disc in $N_K$ and intersects $\la$ once. 

For any level $k$, since the three manifold $E_K$ bounds the torus $\Si$, it induces a vector 
$$Z_k(E_K) = Z_k (E_K, \emptyset, \emptyset, 0) \in V_k(\Si, \nu ) ,$$ 
called the state of the knot $K$ at level $k$. Here we choose $\nu$ as the line generated by the longitude $\la$. Introduce a diffeomorphism between $\Si$ and $S^ 1 \times S ^1$ such that  $\mu$ and $\la$ are respectively sent to $ S^1\times \{ 1 \}$ and $ \{1\}\times S ^1$. Consider the associated basis $(e_\ell) _{\ell = 1, \ldots, k-1}$ as in Section \ref{tqft}. Then the coefficients in this basis of the knot state are given by the Jones polynomials of $K$:
\begin{gather} \label{eq:ks} 
 Z_k (E_K) =  \sqrt\frac{2}{k} \sin \Bigl( \frac{\pi}{k} \Bigr) \sum_{\ell =1 }^{k-1} J_\ell ^K(-e^{i\pi/2k}) e_\ell  
\end{gather}
This formula is obtained by the following argument. Since the basis $(e_\ell)$ of $V_k(\Si , \nu)$ is orthonormal, the coefficient of $e_\ell$ in $Z_k(E_K)$ is the scalar product $ \langle Z_k(E_K), e_\ell \rangle$. By definition $e_\ell = Z_k ( N_K, K, \ell , 0 )$ where we thickened $K$ in such a way that its two boundary components are unlinked. So 
$$  \langle Z_k(E_K), e_\ell\rangle =  Z_k( S^3,K,\ell,0) $$
because the composition of $(E_K, \emptyset, \emptyset, 0)$  with $ (-N_K,K, \ell , 0 )$ is $(   S^3,K,\ell,0)$. Here we used that the line $\nu$ is the kernel of $H_1( \Si ) \rightarrow H_1( E_K)$ and that the Maslov index in (\ref{eq:comp_maslov}) vanishes when two of the Lagrangians are equal. Finally (\ref{eq:jones}) implies (\ref{eq:ks}). 

As explained in Section \ref{sec:equiv}, we may also consider $Z_k(E_K)$ as a vector in the space quantizing the torus $H_1 (\Si, \R)/ H_1 (\Si,  \Z)$:  
\begin{gather} \label{eq:etat_noeud}
 Z_k(E_K) = \frac{\sin(\pi/k)}{\sqrt{k}}\sum_{\ell \in \Z / 2k
  \Z} J_\ell ^K(-e^{i\pi/2k}) \Psi_\ell 
\end{gather}
where $(\Psi_\ell)$ is the canonical basis associated to $(\mu,\la)$. 
\subsection{Recurrence relations}\label{sec:recurrence-relations}

Let $\sP$ be the set of sequences of polynomials $f:\Z\to \C[t^{\pm 1}]$ satisfying for all $n\in\Z$ and $k\in\Z_{>0}$ the equality $$f_{n+2k}(-e^{i\pi/2k})=f_n(-e^{i\pi/2k}).$$ 
For any sequence $f \in \sP$, we define a family $(Z_k(f)\in \Hilb_k, \; k \in \Z_{>0})$ by the formula 
$$ Z_k(f)=\frac{\sin(\pi/k)}{\sqrt{k}}\sum_{\ell\in\Z/2k\Z} f_\ell(-e^{i\pi/2k})\Psi_\ell.$$
By Formula (\ref{eq:etat_noeud}), for any knot $K$ we have $Z_k(E_K)=Z_k(J^K)$. We can define two operators on $\sP$:
$$(M f)_n=q^n f_n, \qquad (L f)_n = f_{n+1}$$ where we set $q=t^2$. These operators satisfy the equation:
\begin{equation}\label{representation}
Z_k(M f)=T^*_{\mu/2k} Z_k(f) \quad \text{ and } \quad Z_k(L f)=T^*_{-\lambda/2k} Z_k(f).
\end{equation}
The operators $M$ and $L$ generate the quantum torus algebra: 
$$\sT=\C[q^{\pm 1} ] \langle M^{\pm 1},L^{\pm 1}| LM=q ML\rangle .$$ 
We say that a sequence $f$ satisfies a {\em q-differential relation} if there is an element $P\in \sT$ such that  $Pf=0$. More generally, we will call {\em non-homogeneous q-differential relation} an equation of the form $$Pf=R$$ 
for some $P\in \sT$ and $R\in \sP$. It was shown in \cite{gl} that the colored Jones polynomials always satisfy $q$-difference relations. As we will see, the Jones polynomials of the torus knots and the figure eight knot satisfy such equations where $R$ has the form $R_n  = R' (t,t^{n} )$ with $R'$ a rational expression in two variables. 
Thanks to Equation \eqref{representation}, such relations for the Jones polynomial will produce families of linear equations on the corresponding knot state.

\subsubsection{Torus knots}

Let $a,b$ be two relatively prime integers. We will denote by $J_\ell^{a,b}$ the $\ell$-th colored Jones polynomial of the torus knot with parameters $(a,b)$. In particular, $J_\ell^{3,2}$ is the Jones polynomial of the right-handed trefoil. In \cite{morton}, we find the following formula:

$$J^{a,b}_\ell=\frac{t^{ab(1-\ell^2)}}{t^2-t^{-2}}
\sum_{r=(1-\ell)/2}^{(\ell-1)/2}\big(t^{4abr^2-4(a+b)r+2}-t^{4abr^2-4(a-b)r-2}\big).$$
The following proposition comes from a direct computation which was first obtained by Hikami in \cite{hikami}.
\begin{prop} \label{sec:recurrence-relations-torique}
For any pair $a,b$, the colored Jones polynomials of the torus knot with parameters $a,b$ denoted by $J_\ell^{a,b}$ satisfy the following non-homogeneous recurrence equation:
$$J_\ell^{a,b}=P_l^{a,b}+t^{4ab(1-\ell)}J^{a,b}_{\ell-2}$$
where $$P_\ell^{a,b}=\frac{t^{2(ab-a-b)(1-\ell)}}{1-t^{-4}}\big( 1-t^{4a(1-\ell)-4}-t^{4b(1-\ell)-4}+t^{4(a+b)(1-\ell)}\big)$$
\end{prop}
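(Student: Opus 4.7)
The plan is to derive the recurrence by a direct computation from Morton's explicit formula displayed just before the proposition. To keep the bookkeeping manageable, set $c=ab$, $u=1-\ell$, and write
$$S_\ell = \sum_{r=-(\ell-1)/2}^{(\ell-1)/2}\bigl(t^{4cr^2-4(a+b)r+2}-t^{4cr^2-4(a-b)r-2}\bigr),$$
so that $J_\ell^{a,b} = t^{c(1-\ell^2)}(t^2-t^{-2})^{-1}S_\ell$.

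\textbf{Step 1: Reduce to a difference of sums.} A short check of exponents gives $4c(1-\ell)+c(1-(\ell-2)^2) = c(1-\ell^2)$, so
$$J_\ell^{a,b} - t^{4c(1-\ell)}J_{\ell-2}^{a,b} = \frac{t^{c(1-\ell^2)}}{t^2-t^{-2}}\bigl(S_\ell - S_{\ell-2}\bigr).$$
Hence it suffices to identify $t^{c(1-\ell^2)}(t^2-t^{-2})^{-1}(S_\ell-S_{\ell-2})$ with $P_\ell^{a,b}$.

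\textbf{Step 2: Extract the boundary terms.} The sum $S_{\ell-2}$ runs over the same indices as $S_\ell$ except for $r=\pm(\ell-1)/2$, and the summand depends only on $r$ (not on $\ell$). Therefore $S_\ell - S_{\ell-2}$ is precisely the contribution of these two boundary values of $r$. Substituting $r=(1-\ell)/2$ and $r=(\ell-1)/2$ (noting $4cr^2 = c(1-\ell)^2 = cu^2$ in both cases) yields
$$S_\ell - S_{\ell-2} = t^{cu^2}\Bigl( t^{2-2(a+b)u} + t^{2+2(a+b)u} - t^{-2-2(a-b)u} - t^{-2+2(a-b)u} \Bigr).$$

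\textbf{Step 3: Match with $P_\ell^{a,b}$.} Using $c(1-\ell^2)+cu^2 = cu(1+\ell)+cu^2 = 2cu$, the expression from Step 1 becomes
$$\frac{t^{2cu}}{t^2-t^{-2}}\Bigl( t^{2-2(a+b)u} + t^{2+2(a+b)u} - t^{-2-2(a-b)u} - t^{-2+2(a-b)u} \Bigr).$$
On the other hand, rewriting $1/(1-t^{-4}) = t^2/(t^2-t^{-2})$ in the definition of $P_\ell^{a,b}$ and expanding the four exponents
$$2(c-a-b)u+2,\quad 2(c-a-b)u+4au-2,\quad 2(c-a-b)u+4bu-2,\quad 2(c-a-b)u+4(a+b)u+2$$
produces exactly the same four monomials, with matching signs. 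This proves the identity.

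The computation is essentially bookkeeping of exponents; the only genuine point is the observation in Step 2 that $S_\ell-S_{\ell-2}$ telescopes down to its two boundary terms, which is what forces the recurrence to be only a two-step, first order non-homogeneous $q$-difference equation rather than a higher-order one.
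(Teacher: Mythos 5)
Your computation is correct, and it is exactly the argument the paper has in mind: the paper offers no written proof, stating only that the proposition "comes from a direct computation" from Morton's formula (first carried out by Hikami), which is precisely what you have done — the telescoping of $S_\ell-S_{\ell-2}$ to the two boundary terms $r=\pm(\ell-1)/2$ and the matching of the four resulting monomials with $P_\ell^{a,b}$ all check out.
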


Suppose that we have $a=2$ and $b$ is odd. Then, the colored Jones polynomial of the torus knot with parameters $(2,b)$ can be written as 

$$J_\ell^{2,b}=\frac{q^{b(1-\ell^2)}}{q-q^{-1}}\sum_{r=(1-\ell)/2}^{(\ell-1)/2}\Big( q^{4br^2-2(b+2)r+1}-q^{4br^2-2(2-b)r-1}\Big)$$
Putting $\langle r\rangle=q^{4br^2-2(b+2)r+1}$, we find that the second term in the parenthesis can be written as $\langle r+1/2\rangle$. Hence, we have the formula:
$$J_\ell^{2,b}=\frac{q^{b(1-\ell^2)}}{q-q^{-1}}\sum_{r=1-\ell}^{\ell} (-1)^{\ell+r+1} \langle r/2\rangle.$$

This expression gives the following recurrence relation obtained also by Hikami in \cite{hikami}.
\begin{prop} \label{sec:recurrence-relations-torique2}
For any odd integer $b$, the colored Jones polynomials of the torus knot with parameters $2,b$ satisfy the following non-homogeneous recurrence equation:

$$J_\ell^{2,b}=-q^{b(1-2\ell)}J^{2,b}_{\ell-1}+q^{b(1-\ell)}\frac{q^{2\ell-1}-q^{1-2\ell}}{q-q^{-1}}.$$
\end{prop}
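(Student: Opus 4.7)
The plan is to derive the recurrence directly from the explicit formula
$$J_\ell^{2,b}=\frac{q^{b(1-\ell^2)}}{q-q^{-1}} S_\ell, \qquad S_\ell := \sum_{r=1-\ell}^{\ell}(-1)^{\ell+r+1}\langle r/2\rangle$$
displayed just before the statement, where $\langle r/2\rangle = q^{br^2-(b+2)r+1}$. The observation that drives everything is that the sign $(-1)^{\ell+r+1}$ flips parity when $\ell$ is replaced by $\ell-1$, so $S_\ell$ and $S_{\ell-1}$ combine by addition rather than subtraction, producing a telescoping in which every interior term cancels.

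Concretely, I would first write
$$S_\ell+S_{\ell-1}=\sum_{r=1-\ell}^{\ell}(-1)^{\ell+r+1}\langle r/2\rangle+\sum_{r=2-\ell}^{\ell-1}(-1)^{\ell+r}\langle r/2\rangle.$$
On the range $2-\ell\le r\le \ell-1$ the two signs are opposite, so the overlap cancels and only the boundary terms $r=1-\ell$ and $r=\ell$ survive, giving
$$S_\ell+S_{\ell-1}=\langle(1-\ell)/2\rangle-\langle\ell/2\rangle.$$

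Next I would plug in $\langle r/2\rangle=q^{br^2-(b+2)r+1}$ at $r=\ell$ and $r=1-\ell$. A short calculation yields
$$S_\ell+S_{\ell-1}=q^{b\ell^2+(2-b)\ell-1}-q^{b\ell^2-(b+2)\ell+1}.$$
Then I would substitute $S_\ell=(q-q^{-1})q^{b(\ell^2-1)}J_\ell^{2,b}$ and the analogous expression for $S_{\ell-1}$, divide both sides by $q^{b(\ell^2-1)}$, and note that $b((\ell-1)^2-\ell^2)=b(1-2\ell)$. Factoring $q^{b(1-\ell)}$ out of the right-hand side leaves exactly
$$J_\ell^{2,b}+q^{b(1-2\ell)}J_{\ell-1}^{2,b}=q^{b(1-\ell)}\frac{q^{2\ell-1}-q^{1-2\ell}}{q-q^{-1}},$$
which is the claimed identity.

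There is no serious obstacle here: once one spots that the parity of $(-1)^{\ell+r+1}$ makes the two sums reinforce rather than cancel interior terms, the rest is bookkeeping with exponents. The only place to be careful is to keep the ranges $\{1-\ell,\dots,\ell\}$ and $\{2-\ell,\dots,\ell-1\}$ straight so that precisely the two boundary terms at $r=\ell$ and $r=1-\ell$ are picked out.
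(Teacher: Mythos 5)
Your proof is correct and follows essentially the same route as the paper: the paper derives the alternating-sum expression $J_\ell^{2,b}=\frac{q^{b(1-\ell^2)}}{q-q^{-1}}\sum_{r=1-\ell}^{\ell}(-1)^{\ell+r+1}\langle r/2\rangle$ and then states that the recurrence follows from it (crediting Hikami), which is exactly the telescoping of $S_\ell+S_{\ell-1}$ that you carry out explicitly. Your exponent bookkeeping at the boundary terms $r=\ell$ and $r=1-\ell$ checks out, so you have simply supplied the details the paper leaves implicit.
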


\subsubsection{Figure eight knot}
Let $q=t^2$ and for any integer $n$, set $\{n\}=q^n-q^{-n}$ and $\{n\}!=\prod_{i=1}^n\{i\}$. The following formula for the colored Jones polynomial of the figure eight knot is due to Habiro, see \cite{habiro,masbaum}.

$$J_n^8=\sum_{m=0}^{\infty}\frac{\{n+m\}!}{\{1\}\{n-m-1\}!}.$$
In order to prove various recursion formulas, we define for any polynomial $P\in \C[q,M,x]$ and integer $n$ the bracket 
$$\langle P\rangle_n=\sum_{m=0}^{\infty}\frac{\{n+m\}!}{\{1\}\{n-m-1\}!}P(q,q^n,q^{2m}).$$

\begin{prop}\label{rec1}
For all $P\in \C[q,M,x]$, the following equations hold:
$$\langle P\rangle =  \bigl\langle (M^2+M^{-2}-q^2 x-q^{-2} x^{-1})P(q,M, q^2x) \bigr\rangle +  ( M-M^{-1})R $$
\begin{xalignat*}{2}
L\langle P\rangle = & \bigl\langle (q^3 M^2 x +q^{-3}M^{-2}x^{-1}-q-q^{-1})P(q, M, q^2x) \bigr\rangle \\ &  +     ( qM-q^{-1}M^{-1})R  
\end{xalignat*}
\begin{xalignat*}{2}
L^{-1}\langle P\rangle = & \bigl\langle (q^3 M^{-2} x +q^{-3}M^{2}x^{-1}-q-q^{-1})P(q, M , q^2x) \bigr\rangle \\ & +  (q^{-1}M-qM^{-1}) R 
\end{xalignat*}
where  $R=\frac{P(q,M,1)}{q - q^{-1}} $.
\end{prop}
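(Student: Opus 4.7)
My plan is to prove all three identities by a uniform shift-and-factor computation on the defining sum
\[\langle P\rangle_n = \sum_{m \geq 0} a_{n, m}\, P(q, q^n, q^{2m}), \qquad a_{n, m} := \frac{\{n+m\}!}{\{1\}\{n-m-1\}!}.\]
In each case I would split off the $m = 0$ summand (which will furnish the boundary $R$-term) and re-index the tail by $m \mapsto m+1$. This reduces every identity to recognizing a single ratio $a_{\ast, m+1}/a_{n, m}$ as the quadratic polynomial of $M^{\pm 1}, x^{\pm 1}$ that appears inside the corresponding bracket on the right.

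The algebraic heart of the proof consists of three explicit ratio computations, each immediate from $\{a\}\{b\} = q^{a+b} + q^{-a-b} - q^{a-b} - q^{b-a}$:
\begin{align*}
a_{n, m+1}/a_{n, m} &= \{n+m+1\}\{n-m-1\} = q^{2n} + q^{-2n} - q^{2m+2} - q^{-2m-2},\\
a_{n+1, m+1}/a_{n, m} &= \{n+m+1\}\{n+m+2\} = q^{2n+2m+3} + q^{-2n-2m-3} - q - q^{-1},\\
a_{n-1, m+1}/a_{n, m} &= \{n-m-1\}\{n-m-2\} = q^{2n-2m-3} + q^{-2n+2m+3} - q - q^{-1}.
\end{align*}
Under the identification $M = q^n$, $x = q^{2m}$, these are precisely the three polynomials $M^2 + M^{-2} - q^2 x - q^{-2} x^{-1}$, $q^3 M^2 x + q^{-3} M^{-2} x^{-1} - q - q^{-1}$ and $q^3 M^{-2} x + q^{-3} M^2 x^{-1} - q - q^{-1}$ featured in the right-hand sides of the proposition.

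With these ratios the three identities drop out. For the first, the decomposition
\[\langle P\rangle_n = a_{n, 0}\, P(q, q^n, 1) + \sum_{m \geq 0} a_{n, m+1}\, P(q, q^n, q^{2m+2})\]
converts the tail, via the first ratio, into $\bigl\langle (M^2 + M^{-2} - q^2 x - q^{-2} x^{-1})\, P(q, M, q^2 x)\bigr\rangle_n$, while $a_{n, 0} = (q^n - q^{-n})/(q - q^{-1})$ reproduces $(M - M^{-1}) R$ evaluated at $n$. The second and third identities are proved in exactly the same way, starting now from $L^{\pm 1}\langle P\rangle_n = \langle P\rangle_{n \pm 1}$, splitting off the $m = 0$ term $a_{n \pm 1, 0}\, P(q, q^{n \pm 1}, 1) = \frac{q^{n \pm 1} - q^{-n \mp 1}}{q - q^{-1}}\, P(q, q^{n \pm 1}, 1)$, and applying the second or third ratio to the tail. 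The one step deserving genuine care is the bookkeeping of how the argument $q^n$ of $P$ is shifted when one rewrites $\langle P\rangle_{n \pm 1}$ as a sum weighted by $a_{n, m}$: the induced shift $M \mapsto q^{\pm 1} M$ inside $P$ must be reconciled with the convention used to define $R$ from $P(q, M, 1)$, so that the boundary identifications with $(q^{\pm 1} M - q^{\mp 1} M^{-1}) R$ go through as stated.
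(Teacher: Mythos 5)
Your proposal is correct and takes essentially the same route as the paper's proof, which likewise splits off the $m=0$ term, re-indexes $m\mapsto m+1$, and recognizes the ratio of coefficients (e.g. $\{n+m+1\}\{n-m-1\}$) as the bracketed polynomial; the paper only writes out the first identity and declares the other two analogous. The bookkeeping point you flag at the end is genuine: for the $L^{\pm 1}$ identities the statement holds verbatim when $P$ is independent of $M$ (the only cases used later, $P=1$ and $P=x$), while for general $P$ the re-indexing produces $P(q,q^{\pm1}M,q^2x)$ and correspondingly shifted boundary terms, a subtlety the paper passes over silently.
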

\begin{proof}
The proof consists in re-indexing the sum. Let us show the first equation.  Replace $m$ by $m+1$ in the definition of $\langle P\rangle_n$. We obtain 
\begin{gather} \label{eq:1}
\langle P\rangle_n=\frac{\{n\}}{\{1\}}P(q,q^n,1)+\sum_{m=0}^{\infty}\frac{\{n+m+1\}!}{\{1\}\{n-m-2\}!}P(q,q^n,q^{2m+2}).
\end{gather}
The first term in the right hand side is $ R (M- M^{-1})$. 
The integrand of the sum in the right hand side can be written as 
$$\frac{\{n+m\}!}{\{1\}\{n-m-1\}!}\{n+m+1\}\{n-m-1\}P(q,q^n,q^{2m+2}).$$ 
We have
$$\{n+m+1\}\{n-m-1\} =  q^{2n} + q^{-2n} - q^{2 (m+1)} - q^{-2(m+1)}$$
So the sum in the right-hand side of (\ref{eq:1})  may be expressed as 
$$\bigl\langle (M^2+M^{-2}-q^2 x-q^{-2} x^{-1})P(q, M , q^2x)  \bigr\rangle.$$
The two other equations are proved in the same way.
\end{proof}

As an application of these formulas, we prove two recursion formulas, the first one being already obtained using the computer by S. Garoufalidis and X. Sun in \cite{gs}.
\begin{prop} \label{sec:recurrence-relations-single}
The function $J_n^8$ satisfies the identity $QJ^8_n=R$ where
\begin{xalignat*}{2}
Q&=(q^{-1}M^2-qM^{-2})L+(qM^2-q^{-1}M^{-2})L^{-1}\\
&+(M^2-M^{-2})(-M^4-M^{-4}+M^2+M^{-2}+q^2+q^{-2}), \\
R& =D(M^5+M^{-5}+M^3+M^{-3}-(q^2+q^{-2})(M+M^{-1}))
\end{xalignat*}
and $D=(q-q^{-1})^{-1}$.
\end{prop}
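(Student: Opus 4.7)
The plan is to apply Proposition \ref{rec1} with $P=1$ to the identity $J^8_n = \langle 1 \rangle_n$, producing three bracket identities for $J^8_n$, $L J^8_n$ and $L^{-1} J^8_n$, and then to take a $\C[q^{\pm 1}, M^{\pm 1}]$-linear combination that eliminates the unwanted $x$- and $x^{-1}$-dependent terms in the integrand.

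Concretely, setting $P = 1$ in the three formulas of Proposition \ref{rec1} gives
\begin{align*}
J^8_n &= \langle M^2 + M^{-2} - q^2 x - q^{-2} x^{-1} \rangle_n + (M - M^{-1})D, \\
L J^8_n &= \langle q^3 M^2 x + q^{-3} M^{-2} x^{-1} - q - q^{-1} \rangle_n + (qM - q^{-1}M^{-1})D, \\
L^{-1} J^8_n &= \langle q^3 M^{-2} x + q^{-3} M^2 x^{-1} - q - q^{-1} \rangle_n + (q^{-1}M - qM^{-1})D,
\end{align*}
so that any combination $\alpha_1 J^8_n + \alpha_2 L J^8_n + \alpha_3 L^{-1} J^8_n$ equals the single bracket $\langle \alpha_1 A_1 + \alpha_2 A_2 + \alpha_3 A_3 \rangle_n$ plus the associated combination of boundary terms. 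The goal is to choose $\alpha_i \in \C[q^{\pm 1}, M^{\pm 1}]$ making the integrand independent of $x$, so that the bracket collapses back to a multiple of $J^8_n$.

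The ansatz is dictated by the shape of the claimed $Q$: take $\alpha_2 = q^{-1}M^2 - qM^{-2}$ and $\alpha_3 = qM^2 - q^{-1}M^{-2}$. A short calculation shows that the coefficient of $x$ in $\alpha_2 A_2 + \alpha_3 A_3$ is $q^2(M^4 - M^{-4})$ and that of $x^{-1}$ is $q^{-2}(M^4 - M^{-4})$; since $A_1$ contributes $-q^2$ and $-q^{-2}$ to these coefficients, the single choice $\alpha_1 = M^4 - M^{-4}$ kills both simultaneously. One is then left with a bracket $\langle c(M) \rangle_n = c(M) J^8_n$ where $c(M)$ is the constant-in-$x$ part of $\alpha_1 A_1 + \alpha_2 A_2 + \alpha_3 A_3$, namely $\alpha_1 (M^2+M^{-2}) - (q+q^{-1})(\alpha_2+\alpha_3)$.

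It remains to rearrange: moving $c(M) J^8_n$ to the left side, the coefficient of $J^8_n$ becomes $\alpha_1 - c(M)$, which a direct expansion identifies with $(M^2-M^{-2})(-M^4-M^{-4}+M^2+M^{-2}+q^2+q^{-2})$; this yields the operator $Q$. In parallel, expanding $\alpha_1 B_1 + \alpha_2 B_2 + \alpha_3 B_3$ and collecting powers of $M$ reproduces the stated $R = D(M^5+M^{-5}+M^3+M^{-3}-(q^2+q^{-2})(M+M^{-1}))$. The argument is entirely conceptual in its use of Proposition \ref{rec1}; the only obstacle is bookkeeping the Laurent polynomials in $M$ and $q$, and verifying that the two matchings (for the coefficient of $J^8_n$ and for the right-hand side $R$) hold as identities in $\C[q^{\pm 1}, M^{\pm 1}]$.
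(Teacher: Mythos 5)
Your proposal is correct and follows essentially the same route as the paper: apply Proposition \ref{rec1} with $P=1$ and take the $\C[q^{\pm1},M^{\pm1}]$-linear combination prescribed by $Q$ so that the $\langle x\rangle$ and $\langle x^{-1}\rangle$ terms cancel, leaving the stated $R$. Your version merely makes the choice of coefficients constructive (solving for $\alpha_1=M^4-M^{-4}$ after fixing the $L^{\pm1}$ coefficients), and the bookkeeping you describe checks out.
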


\begin{proof} 
Apply the three formulas of Proposition \ref{rec1} to $P=1$. This gives:
\begin{xalignat}{3}\label{jones-decale}
\langle 1\rangle= &(M^2+M^{-2})\langle 1\rangle-q^2\langle x\rangle-q^{-2}\langle x^{-1}\rangle+D(M-M^{-1})\\
L\langle 1\rangle= & -(q+q^{-1})\langle 1\rangle+q^3M^2\langle x\rangle+q^{-3}M^{-2}\langle x^{-1}\rangle +D(qM-q^{-1}M^{-1})\notag\\
L^{-1}\langle 1\rangle= & -(q+q^{-1})\langle 1\rangle+q^3M^{-2}\langle x\rangle+q^{-3}M^2\langle x^{-1}\rangle +D(q^{-1}M-q M^{-1})\notag
\end{xalignat}
Computing the linear combination of $L\langle1\rangle,L^{-1}\langle1\rangle$ and $\langle1\rangle$ given by $Q\langle 1 \rangle$, all the brackets cancel and we obtain $R$.
\end{proof}

\begin{prop} \label{sec:recurrence-relations-system}
Let $J_\ell=\langle 1\rangle=J_\ell^8$ and $I_\ell=\langle x\rangle$. Then  $(J_\ell,I_\ell)$ satisfies the following $q$-difference equation:
\begin{eqnarray*}
\begin{pmatrix} J_{\ell+1}\\ I_{\ell+1} \end{pmatrix}&=&\begin{pmatrix}
q^{-1}M^{-4}-q^{-1}M^{-2}-q & q^3 M^2 -q M^{-2}\\
q^{-1}M^{-2}-qM^2& q^3M^4-qM^2-q
\end{pmatrix}\begin{pmatrix} J_\ell\\ I_\ell \end{pmatrix}\\
&&+D\begin{pmatrix}qM-q^{-1}M^{-3}\\ qM^3-q^{-1}M^{-1}\end{pmatrix}
\end{eqnarray*}
where $D=(q-q^{-1})^{-1}$.
\end{prop}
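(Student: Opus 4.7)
The plan is to derive the two rows of the claimed matrix identity separately, each time by applying Proposition~\ref{rec1} and eliminating an unwanted auxiliary bracket using one of its ``unshifted'' consequences. For the first row I would follow exactly the pattern of the proof of Proposition~\ref{sec:recurrence-relations-single}: the three identities \eqref{jones-decale} (obtained by applying Proposition~\ref{rec1} with $P=1$) express $L\langle 1\rangle = J_{\ell+1}$ as a linear combination of $\langle 1\rangle$, $\langle x\rangle$ and $\langle x^{-1}\rangle$ (the second line of \eqref{jones-decale}), while the first line of \eqref{jones-decale} is a non-homogeneous linear relation between those same three brackets. Solving this relation for $q^{-3}M^{-2}\langle x^{-1}\rangle$ and substituting into the expression for $L\langle 1\rangle$ should give
$$ L\langle 1\rangle = (q^{-1}M^{-4}-q^{-1}M^{-2}-q)\,\langle 1\rangle + (q^{3}M^{2}-qM^{-2})\,\langle x\rangle + D(qM-q^{-1}M^{-3}),$$
which is the first row of the system.

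For the second row I would apply Proposition~\ref{rec1} once more but now with $P=x$, so that $P(q,M,q^{2}x)=q^{2}x$. The second equation of Proposition~\ref{rec1} then reads
$$ L\langle x\rangle = q^{5}M^{2}\langle x^{2}\rangle + q^{-1}M^{-2}\langle 1\rangle - (q^{3}+q)\langle x\rangle + D(qM-q^{-1}M^{-1}),$$
which involves the unwanted bracket $\langle x^{2}\rangle$. The first equation of Proposition~\ref{rec1} with $P=x$, namely
$$ \langle x\rangle = q^{2}(M^{2}+M^{-2})\langle x\rangle - q^{4}\langle x^{2}\rangle - \langle 1\rangle + D(M-M^{-1}),$$
is precisely the algebraic relation needed to eliminate $\langle x^{2}\rangle$. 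Substituting the resulting expression for $q^{5}M^{2}\langle x^{2}\rangle$ should collapse the coefficients to those of the second row, $(q^{-1}M^{-2}-qM^{2})\,\langle 1\rangle + (q^{3}M^{4}-qM^{2}-q)\,\langle x\rangle + D(qM^{3}-q^{-1}M^{-1})$.

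The proof is thus a direct computation: no new algebraic input is needed beyond Proposition~\ref{rec1}. The only real work is bookkeeping. The main pitfall I anticipate is sign and power-of-$q$ mistakes when multiplying elements of the quantum torus algebra through the brackets; in particular one must remember that although the scalar $q$ commutes with $M$, the shift operator $L$ does not, so the identities must be derived as equalities of elements of $\sP$ before any re-indexing in $L$ is performed. Once these two eliminations are carried out carefully the claim follows by writing the two resulting scalar equations as a single matrix identity.
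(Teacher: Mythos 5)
Your proposal is correct and follows essentially the same route as the paper: the first row is obtained exactly as in the paper by solving the first identity of \eqref{jones-decale} for $\langle x^{-1}\rangle$ and substituting into the expression for $L\langle 1\rangle$, and the second row by applying the first two formulas of Proposition \ref{rec1} with $P=x$ and eliminating $\langle x^2\rangle$, which is precisely what the paper indicates ("the second line can be proved similarly"). Your explicit coefficient computations check out against the stated matrix.
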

\begin{proof}
The first line of the system follows from the two first formulas of (\ref{jones-decale}). From the first one, we deduce that
$$\langle x^{-1}\rangle=q^2(M^2+M^{-2}-1)\langle 1\rangle -q^4\langle x\rangle +q^2D(M-M^{-1}).$$
The second one implies 
\begin{xalignat*}{2} 
L \langle 1 \rangle = & q^3 M^2 \langle x \rangle + q^{-3} M^{-2} \langle x^{-1} \rangle - ( q + q^{-1} ) \langle 1 \rangle + D( q M - q^{-1} M^{-1}) \\
= & q^3 M^2 \langle x \rangle + q^{-1} ( 1 + M^{-4} - M^{-2} ) \langle 1 \rangle - q M^{-2} \langle x \rangle + q^{-1} D (M ^{-1} -  M^{-3})\\  & -  ( q + q^{-1} ) \langle 1 \rangle + D( q M - q^{-1} M^{-1}) \\
\intertext{substituting the previous expression for $\langle x^{-1} \rangle$} 
= & (q^{-1} M^{-4} - q^{-1} M^{-2} - q ) \langle 1 \rangle + (q^3 M^2 - q M^{-2} ) \langle x \rangle\\ &  + D ( q M - q^{-1} M^{-3}) 
\end{xalignat*} 
This is the first line in the system of the proposition. The second line can be proved similarly by applying the two first formulas of Proposition \ref{rec1} to $P=x$. 
\end{proof}

The sequences $J_n$ and $I_n$ will be used for constructing knot states. It is then necessary that they belong to $\sP$ which is the purpose of the following lemma.
\begin{lem} \label{sec:exist-I}
For all $k>2$, one has $J_{n+2k}(-e^{i\pi/2k})=J_n(-e^{i\pi/2k})$ and the same formula holds for $I_n$.
\end{lem}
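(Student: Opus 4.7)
The plan is to exploit the two elementary facts that hold at the specialization $t=-e^{i\pi/2k}$, namely that $q:=t^2=e^{i\pi/k}$ satisfies $q^{2k}=1$, so $\{j\}:=q^{j}-q^{-j}$ is periodic in $j$ with period $2k$ and vanishes precisely when $k$ divides $j$. In particular $\{0\}=0$ while $\{1\}=2i\sin(\pi/k)\ne 0$ (the hypothesis $k>2$ is more than enough for this; it would fail only at $k=1$).

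The first step is to rewrite Habiro's summand in the telescoped form
$$\frac{\{n+m\}!}{\{1\}\{n-m-1\}!}=\frac{1}{\{1\}}\prod_{i=-m}^{m}\{n+i\}\qquad (0\leq m\leq n-1),$$
and to observe, with the standard convention that $1/\{-j\}!=0$ for $j\geq 1$, that the sum in $J_n$ has only finitely many nonzero terms. Equivalently, extending the product expression beyond $m=n-1$, the factor $\{n+(-n)\}=\{0\}=0$ appears as soon as $m\geq n$, so those ``extra'' contributions automatically vanish.

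The second step, which is the heart of the argument, is to apply the same formula to $J_{n+2k}$:
$$J_{n+2k}(-e^{i\pi/2k})=\frac{1}{\{1\}}\sum_{m=0}^{n+2k-1}\prod_{i=-m}^{m}\{n+2k+i\}.$$
The periodicity $\{n+2k+i\}=\{n+i\}$ lets me replace every factor, producing the same product that appears in $J_n$. For $m$ in the extended range $[n,\,n+2k-1]$ the set $\{-m,\dots,m\}$ contains $-n$, so the product has a factor $\{0\}=0$ and these terms vanish; what survives is exactly the sum for $J_n(-e^{i\pi/2k})$, proving the equality.

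The argument for $I_n$ is identical because the additional weight $q^{2m}$ in the summand does not depend on $n$, so the shift $n\mapsto n+2k$ only affects the factorial piece. The main (minor) obstacle is purely bookkeeping: one must be careful about what the sum ``from $0$ to $\infty$'' means at the root of unity, but the product form above makes the vanishing of the tail transparent, so no further subtlety enters. Extension of the two sequences to $n\leq 0$ can be done either via Habiro's convention (which gives zero) or via the recurrence of Proposition \ref{sec:recurrence-relations-system}; both are compatible with the periodic behavior we have just established.
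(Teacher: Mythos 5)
Your core computation is correct, and it takes a genuinely different route from the paper. The paper disposes of the $J$-part by quoting a general fact (every sequence of colored Jones polynomials lies in $\sP$, see \cite{LJ2}, Proposition 1.1) and then treats $I_n=\langle x\rangle_n$ indirectly: from the three identities \eqref{jones-decale} and the $\sT$-module structure of $\sP$ it deduces that $(1-q^2M^4)\langle x\rangle$ and $(1-q^2M^{-4})\langle x\rangle$ belong to $\sP$, and eliminates; this elimination is exactly where the hypothesis $k>2$ (i.e.\ $q^4\neq 1$) enters. You instead prove both statements directly from Habiro's sum: rewriting the summand as $\frac{1}{\{1\}}\prod_{i=-m}^{m}\{n+i\}$, using the periodicity $\{j+2k\}=\{j\}$ at $q=e^{i\pi/k}$, and killing the extra terms through the factor $\{0\}=0$, the weight $q^{2m}$ being untouched since it does not involve $n$. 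This is self-contained (no appeal to \cite{LJ2}), treats $J_n$ and $I_n$ on the same footing, and needs only $\{1\}\neq 0$, so it even covers $k=2$; what it loses is generality, since the paper's argument for the $J$-part works for an arbitrary knot and its $I$-argument uses only the recursion, not the explicit formula.

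One claim in your closing sentence is wrong and should be removed: extending $J_n$ and $I_n$ by zero for $n\leq 0$ (the convention $1/\{j\}!=0$ for $j<0$) is \emph{not} compatible with the periodicity you proved. Indeed $J_1(\zeta)=I_1(\zeta)=1$ at $\zeta=-e^{i\pi/2k}$, whereas the zero extension gives $J_{1-2k}=I_{1-2k}=0$, so the relation fails at $n=1-2k$. Since the lemma is used precisely to guarantee $J,I\in\sP$, and $\sP$ consists of sequences indexed by all of $\Z$, the negative indices are not a cosmetic matter. The fix is the convention you have in effect already introduced: define the bracket for every $n\in\Z$ by the product form $\langle P\rangle_n=\frac{1}{\{1\}}\sum_{m\geq 0}\bigl(\prod_{i=-m}^{m}\{n+i\}\bigr)P(q,q^n,q^{2m})$, which is a finite sum because every term with $m\geq |n|$ contains the factor $\{0\}$. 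This yields the odd extension $J_{-n}=-J_n$, $I_{-n}=-I_n$, consistent with $J^K_{-\ell}=-J^K_\ell$ from the skein-theoretic definition, and your term-by-term argument then proves the periodicity verbatim for every $n\in\Z$ (alternatively, the recurrence of Proposition \ref{sec:recurrence-relations-system} can be run backwards, but then its compatibility with the established periodicity is something to check, not to assert).
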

\begin{proof}
This equation is automatically satisfied for sequences of Jones polynomials (see \cite{LJ2}, Proposition 1.1 for a proof). Hence, we have to prove it only for $I_n$. From the fact that $\sP$ is a module over $\sT$ and that $J_n=\langle 1\rangle$ belong to $\sP$, we obtain that the three terms of equations \eqref{jones-decale} belong to $\sP$. As constant terms also belong to $\sP$ we obtain that the three expressions  
\begin{gather*}
q^2\langle x\rangle+q^{-2}\langle x^{-1}\rangle,\qquad q^3M^2\langle x\rangle+q^{-3}M^{-2}\langle x^{-1}\rangle, \\   q^3M^{-2}\langle x\rangle+q^{-3}M^2\langle x^{-1}\rangle
\end{gather*}
belong to $\sP$. Eliminating $\langle x^{-1}\rangle$, we find that $(1-q^2M^4)\langle x\rangle$ and $(1-q^2M^{-4})\langle x\rangle$ belong to $\sP$. This shows that $I_n$ satisfies the equation $$I_{n+2k}(-e^{i\pi/2k})=I_n(-e^{i\pi/2k})$$ unless one has $q^{2+4n}=1=q^{2-4n}$ for $q=e^{i\pi/k}$. This is impossible if $q^4\ne 1$ which is assumed by the hypothesis $k>2$. 
\end{proof}

\section{Microsupport and abelian representations}\label{sec:micro_abelian}

\subsection{Moduli spaces} \label{sec:moduli-spaces}

For any compact manifold $X$, we define the moduli space
$\mo (X)$ as the set of conjugacy classes of representations $\rho :
\pi_1 (X) \rightarrow \su$. In particular, given a knot $K$, we consider the moduli spaces of the exterior $E_K$ and of the peripheral torus $\Si$. Since $\Si$ is the boundary of $E_K$ there is a natural restriction map 
$$  r : \mo ( E_K) \rightarrow \mo ( \Si)$$
As in the previous sections let $E = H_1(\Si, \R)$ and $R= H_1 ( \Si, \Z)$. There is a natural projection  $ \pi : E \rightarrow \mo ( \Si)$ defined by: 
\begin{gather} \label{eq:coordonnees-mo-Sigma}
 \pi (x) (\gamma) = \exp \bigl( (\gamma \cdot x ) D \bigr) , \qquad x \in E, \; \gamma \in H_1( \Si, \Z)
\end{gather}
where the $\cdot$ stands for the intersection product and  $D$ is the diagonal matrix with coefficient $2i\pi$, $-2i \pi$. The map $\pi$ is onto and induces a bijection between $\mo ( \Si)$ and the quotient of $E$ by the group $R \rtimes \Z_2$. 

$\mo (E_K)$ is the disjoint union of  $\mo^{\ab}$ and $\mo ^{\ir}$, which consist respectively of the abelian and irreducible representations. Since $H_{1}( E_K, \Z) $ is infinitely cyclic,  $\mo^{\ab}$ is homeomorphic to an interval. Let  $\Delta_K\in\Z[t^{\pm 1}]$ be the Alexander polynomial of $K$ normalized in such a way that $$\Delta_K(1)=1 \quad  \text{ and } \quad \Delta_K(t^{-1})=\Delta_K(t). $$  We say that a representation $\rho \in \mo ^{\ab}$ is regular if $\Delta_K( u ) \neq 0$ for any eigenvalue $u$ of $\rho (\mu)^2$.

Since $H_{1}( E_K, \Z) $ is generated by $\mu$, we can parameterize $\mo^{\ab}$ by $[0,1]$ by sending $t \in [0,1]$ into the representation $\rho\in \mo^{\ab}$ such that $$ \rho ( \mu ) = \exp ( t D/2)$$ with $D$ the same matrix as above. The restriction of $r$ to $\mo^{\ab}$ is injective. It sends the the representation $\rho$ parameterized by $t$ into $\pi ( t \la /2)$. We say that a point $x \in r ( \mo ^{\ab})$ is regular if $x$ is the restriction of a regular abelian representation and  a neighborhood of $x$ in $\mo (\Si)$ does not meet $r( \mo ^{\ir})$.  

\subsection{Examples} 

Let us consider first the torus knot with parameter $(a,b)$. Its Alexander polynomial is 
$$ \Delta_{a,b} ( t) = t^{\frac{1}{2} ( a + b  -ab -1)} \frac{ (t-1) (t^{ab} -1)}{( t^a -1 ) ( t^b -1 )} .$$
It has $2N$ roots with $N= (a-1)(b-1)/2$. $\mo (E_K)$ consists of $N$ open intervals of irreducible representations attached  to $\mo ^{\ab}$ as follows
$$ \mo (E_K) = \Bigr( \mo^{\ab} \amalg  \coprod_{i=1,\ldots, N}  I_i \Bigl)/\Phi  $$ 
Here $I_i$ is a closed interval, $\mo^{\ab} \simeq [0,1]$ as in the previous section and $\Phi$ is a bijection from  $\coprod_{ i = 1 , \ldots, N} \partial I_i$ to 
$$ X= \{ t \in [0,1]/  \Delta_{a,b}( e^{2i\pi t}) =0 \} \subset \mo^{\ab}.$$ 
For each $i$, the restriction $r$ sends each $I_i$ into $\pi( (ab \mu - \la ) \R)$ or $\pi ( (ab \mu - \la ) \R + \frac{1}{2ab} \la )$. But $r$ is not injective. Any point of $r \bigl( \mo ^{\ab} \setminus \{ \ell / ab  ;\; \ell =0, \ldots, N\} \bigr)$ is regular. The points $r(\ell / ab)$ with $\ell$ a multiple of $a$ or $b$ may or may not be regular. For instance, for the trefoil knot, $r(0)$ is regular whereas $r(1/2)$ is not.   

For the figure eight knot, the Alexander polynomial is 
$$\Delta_8( t)  = 3 - t - t^{-1}$$
It has no zero in the unit circle. The set $\mo^{\ir}$ of irreducible representations is a circle. The restriction of $r$ to $\mo^{\ir}$ is an immersion with one double point. Its image is $\pi (X_8)$ with 
\begin{xalignat}{2} \label{eq:X_8}
  X_8 = & \bigl\{  p \mu + q \la / \; \cos ( 2 \pi p ) + 1 = \cos ( 8 \pi q ) - \cos ( 4 \pi q) \bigr\} \\ & \quad \setminus \bigl( \mu/2 + \mu \Z  + \la/2 \Z \bigr)  \notag
\end{xalignat}
Furthermore $r( \mo^{\ab} )\cap r( \mo^{\ir}) = \{ \pi( \frac{1}{4} \la) \}$. So all the points of $r( \mo^{\ab})$ are regular except $ \pi( \frac{1}{4} \la)$.

\subsection{Microsupport}

Our first conjecture says that the knot state $(Z_k)$ concentrates on the image $r(\mo (E_K))$ as the level $k$ becomes large. Let us first recall the definition of microsupport.  Consider a family $( \xi_k \in \Hilb_k; \; k \in \Z_{>0})$. We say $(\xi_k)$ is {\em admissible} if there exists a positive $C$ and an integer $M$ such that 
$$\bigl\|  \xi_k  \bigr\|_{\Hilb_k} \leqslant C k^{M} .$$ 
We say that $(\xi_k)$ is a $O(k^{-\infty})$ on an open set $U$ of $E$ if for any $x \in U$, there exists a a neighborhood $V$ of $x$ and a sequence $(C_M)$ of positive numbers such that for any $M$ and any $k$, 
$$  | \xi_k (y) | \leqslant C_M k^{-M}, \qquad \forall y \in V $$ 
The {\em microsupport} of $(\xi_k)$ is the smallest closed subset $F$ of $E$ such that $(\xi_k)$ is a $O(k^{-\infty})$ on $E \setminus F$. We denote it by $\MS( \xi_k)$. The microsupport is a closed $R$-invariant subset of $E$. If the family $(\xi_k)$ consists of alternating sections, its microsupport is also invariant by $-\id_E$. 

Consider the knot state $Z_k(E_K) \in V_k( \Si, \nu)$  as in Section \ref{sec:knot-state} and identify $V_k(\Si,\nu)$ with the subspace of alternating sections of $\Hilb_k( j, \delta)$ as in Section \ref{sec:equiv}.  

\begin{conj} \label{sec:conj_microsupport} 
The microsupport of the knot state $(Z_k(E_K))_k$ satisfies
$$ \pi  \bigl( \MS ( Z_k(E_K )) \bigr) \subset  r ( \mo (E_K)).$$
\end{conj}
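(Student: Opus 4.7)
The plan is to exploit the fact that the colored Jones polynomial always satisfies a $q$-difference equation (by Garoufalidis--Le) together with the correspondence \eqref{representation}, which sends the generators $M, L$ of the quantum torus algebra $\sT$ to the operators $T^*_{\mu/2k}$ and $T^*_{-\la/2k}$. By Theorem \ref{sec:Toeplitz_operators}, each $T^*_{\nu/2k}$ is a Toeplitz operator whose principal symbol is $x \mapsto \exp(i \om(x, \nu/2)) = \exp(2i\pi (x \cdot \nu))$. Consequently any Laurent polynomial $P(q, M, L) \in \sT$ acts on the family $(Z_k(E_K))$ as a Toeplitz operator $\widehat{P}_k$, and its principal symbol $\sigma_P \in \Ci_R(E)$ is obtained by substituting $q \to 1$, $M \to \exp(2i\pi (x \cdot \mu))$, $L \to \exp(-2i\pi (x \cdot \la))$ in $P$. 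Thanks to Section \ref{sec:recurrence-relations}, a recursion $P J^K = R$ (possibly non-homogeneous, with $R$ of strictly lower order) for the Jones polynomial translates into an operator identity $\widehat{P}_k Z_k(E_K) = \widehat{R}_k Z_k(E_K) + O(k^{-\infty})$ on the knot state.

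Next I would invoke (or prove, in the style of the appendix) the following microlocal principle, which is essentially Proposition \ref{prop:micro_supp_Toeplitz}: if $(\xi_k)$ is admissible in $\Hilb_k$ and $\widehat{P}_k$ is a Toeplitz operator with principal symbol $\sigma_P$, and if $\widehat{P}_k \xi_k$ is $O(k^{-\infty})$ on an open set $U \subset E$, then $\MS(\xi_k) \cap U \subset \{\sigma_P = 0\} \cap U$. The proof is standard: at any $x \in U$ with $\sigma_P(x) \ne 0$, divide by $\sigma_P$ on a neighborhood and compose with a parametrix for $\widehat{P}_k$ built from the symbolic calculus recalled in Section \ref{sec:tore_Toeplitz_operators}; this recovers $\xi_k$ up to $O(k^{-\infty})$ near $x$. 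Applying this to the recursion yields $\pi(\MS(Z_k(E_K))) \subset \pi(\{\sigma_P = 0\})$.

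The crucial remaining step is to match the zero locus of $\sigma_P$ with the image $r(\mo(E_K))$. A character $\pi(x) \in \mo(\Si)$ extends to a representation of $\pi_1(E_K)$ precisely when the pair of eigenvalues $(\exp(2i\pi (x \cdot \mu)), \exp(-2i\pi (x \cdot \la)))$ lies on the $A$-polynomial curve of $K$ (together with the abelian component $\{x \cdot \mu = 0\}$). The classical limit of the AJ conjecture predicts that the principal symbol $\sigma_P$ of any element of the recursion ideal vanishes on this curve, so the inclusion $\pi(\{\sigma_P = 0\}) \subset r(\mo(E_K))$ is conjecturally available. The main obstacle is precisely this identification: a general proof requires the AJ conjecture, which is open. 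For the weaker microsupport inclusion, however, one only needs, at each $x \notin \pi^{-1}(r(\mo(E_K)))$, a single element of the recursion ideal whose symbol does not vanish at $x$. This is how I would approach the two cases treated in the paper: for the torus knots, use the order-one recurrence of Proposition \ref{sec:recurrence-relations-torique} (the symbol factorizes through the Alexander polynomial and an abelian contribution), and for the figure eight, use the order-two operator $Q$ of Proposition \ref{sec:recurrence-relations-single} together with the auxiliary system of Proposition \ref{sec:recurrence-relations-system} to handle the non-homogeneous remainder $R$. In both cases one checks by direct computation that the zero set of the principal symbol coincides with $\pi^{-1}(r(\mo(E_K)))$ described in the examples subsection.
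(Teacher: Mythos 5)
First, note that the statement you are proving is stated in the paper as a conjecture: the paper itself only establishes it for the figure eight knot (Theorem \ref{sec:8_theo}) and a strictly weaker inclusion for torus knots (Theorem \ref{prop:torus_ab_preuve}, with the full case deferred to \cite{Ctoric}). Your overall strategy --- turn $q$-difference relations for $J^K$ into Toeplitz operator identities via \eqref{representation} and Theorem \ref{sec:Toeplitz_operators}, then use a parametrix/symbolic-calculus argument to bound $\MS(Z_k(E_K))$ by the zero locus of the symbols --- is exactly the paper's strategy, and your admission that the general case hinges on (a version of) the AJ conjecture is honest. The genuine gap is in your last step: you assert that a point of $\mo(\Si)$ lies in $r(\mo(E_K))$ precisely when its eigenvalue pair lies on the $A$-polynomial curve (plus the abelian component), and that ``in both cases one checks by direct computation that the zero set of the principal symbol coincides with $\pi^{-1}(r(\mo(E_K)))$''. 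Both claims are false in general: lying on the real points of the deformation/characteristic variety is necessary but not sufficient for being the restriction of an $\su$-representation (this is precisely the strict inclusion \eqref{eq:reel_comp}, and the paper stresses that even the proved polynomial AJ relations only give the weaker bound \eqref{eq:micro_AJ_conjecture}). Concretely, for the torus knots the symbol of $\id-\tilde M^{-2ab}L^{-2}$ vanishes on the full lines $D_{ab}\cup(D_{ab}+\tfrac{1}{2ab}\la)$, whereas $r(\mo^{\ir}(E_{a,b}))$ is only a finite union of arcs inside them; so the computation you propose yields only the weaker statement of Theorem \ref{prop:torus_ab_preuve}, not Conjecture \ref{sec:conj_microsupport}.

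Two further points need repair. For a non-homogeneous relation $PJ^K=R$ with $R_n=R'(t,t^n)$, the right-hand side does not act on the knot state: the correct translation is $\widehat P_k Z_k(E_K)= T_k Z^0_k$ with $Z^0_k$ the state \eqref{eq:def_ground_state} concentrated on $D_0=\R\la+\Z\mu$, so the conclusion is $\MS(Z_k(E_K))\subset\{\sigma_P=0\}\cup D_0$ (harmless, since $D_0=\pi^{-1}(r(\mo^{\ab}))$, but your operator identity $\widehat P_k Z_k=\widehat R_k Z_k+O(k^{-\infty})$ is wrong as written and would lead to the wrong zero set). For the figure eight knot, the determinant of the $2\times 2$ system of Proposition \ref{sec:recurrence-relations-system} also vanishes at the points $\mu/2+\mu\Z+\la/2\Z$, which do not belong to $\pi^{-1}(r(\mo(E_8)))$; the paper excludes them by a propagation-of-singularities argument applied to the single equation $QJ=R$ of Proposition \ref{sec:recurrence-relations-single} (whose symbol vanishes there to first order only, with $R$ microsupported away from these points). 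In your sketch the roles are inverted --- $Q$ is not needed to ``handle the remainder'' of the system, it is needed for these exceptional points --- and without this extra step your argument does not give the stated inclusion even for the figure eight knot.
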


A stronger version of the conjecture is the equality between $r ( \mo (E_K))$ and  $\pi  \bigl( \MS ( Z_k(E_K )) \bigr)$. We do not know any counter-example.

If $K$ is the trivial knot, $\mo( E_K)$ consists only of abelian representation, and it follows from Theorem \ref{theo:solid-torus-state} that $\pi  (\MS( Z_k(E_K)))$ is exactly  $r ( \mo (E_K)) $. 

\subsection{Abelian representations}  \label{sec:conj_abel_repr}

Our second conjecture describes the knot state on a neighborhood of any regular abelian representation. Denote by $\mu, \la \in R$ the classes of the meridian and longitude respectively. Recall that $\mo ^{\ab}$ maps onto $\pi (  \R \la)$. We say that a point $x \in \la \R$ is regular if $\pi( x)$ is regular as defined in Section \ref{sec:moduli-spaces}. It means that a neighborhood of $\pi(x)$ does not meet $r( \mo ^{\ir})$ and $\Delta_K ( e^{4i \pi q }) \neq 0$ if $ x = q \la$. 

\begin{conj}\label{conj:abelian}
Any regular point of $ \R \la$ has an open neighborhood $V$ in $E$ such that $V \cap \R \la$ consists of
regular points and 
$$ Z_k( E_K) (x)   =   e^{im \frac{\pi}{4}} \Bigl( \frac{k}{2 \pi} \Bigr)^{1/4}  t_\la^k(x) \otimes f(x ,k ) \Om_{\la} + O(k^{-\infty}) , \qquad \forall x \in V $$
where $m$ is an integer,  
\begin{itemize} 
\item[-] $t_\la$ is the holomorphic section of $L$ restricting to $1$ on $\R \la$.
\item[-] $f(\cdot , k)$ is a sequence of $\Ci (V)$ admitting an
  asymptotic expansion of the form $f_0 + k^{-1} f_1 +  $ for the
  $\Ci$ topology with holomorphic coefficients $f_i$ defined on $V $ and  $$f_0(q\la) =  \frac{1}{\sqrt{2}}  \frac{ \si - \si^{-1}  }{\Delta_K (\si^2)}  \quad \text{ with } \quad \si = e ^{ 2i \pi q }  $$ for any $q \la \in V$.
\item[-] $\Om_{\la}\in \delta$ is such that $\Om_\la ^2 ( \la) = 1$.   
\end{itemize}
\end{conj}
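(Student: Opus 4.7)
The plan is to combine the $q$-difference recursions derived in Section \ref{sec:recurrence-relations} with the Toeplitz calculus of Section \ref{sec:tore_Toeplitz_operators} and a Lagrangian-state ansatz for $Z_k(E_K)$ on a neighborhood of $\R\la$. The first step is to translate the recursion for $J^K$ into an operator equation on the knot state. Via the identity (\ref{representation}), together with Theorem \ref{sec:Toeplitz_operators}, the operators $M$ and $L$ acting on $\sP$ correspond to Toeplitz operators on $\Hilb_k$ with vanishing subprincipal symbols and with principal symbols $\exp(i\om(\cdot,\mu/2))$ and $\exp(-i\om(\cdot,\la/2))$ respectively. Consequently any non-homogeneous $q$-difference relation $P f = R$ with $P\in\sT$ produces an identity $A_k Z_k(E_K) = Z_k(R)$ where $A_k$ is a Toeplitz operator whose full symbol can be read off from $P$. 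For the torus knot this is the scalar relation of Proposition \ref{sec:recurrence-relations-torique}; for the figure eight knot it is the $2\times 2$ system of Proposition \ref{sec:recurrence-relations-system} applied to the pair $(J_n, I_n)$, which by Lemma \ref{sec:exist-I} both belong to $\sP$.

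Next, on a neighborhood $V$ of a regular point $q_0\la\in \R\la$ I would set up the WKB ansatz
\begin{equation*}
Z_k(E_K)(x) = e^{im\pi/4}\Bigl(\frac{k}{2\pi}\Bigr)^{1/4} t_\la^k(x)\otimes f(x,k)\,\Om_\la + O(k^{-\infty}),
\end{equation*}
with $f(\cdot,k)\sim f_0+k^{-1}f_1+\cdots$ holomorphic, i.e.\ a Lagrangian state supported on $\R\la$ in the sense of \cite{l3}. A direct computation (analogous to Theorem \ref{theo:solid-torus-state}) shows that applying a Toeplitz operator with principal symbol $\si$ and subprincipal symbol $\si_1$ to such a state yields at the two leading orders a Lagrangian state with amplitude $\si|_{\R\la} f_0$ and $\si|_{\R\la} f_1 + \si_1|_{\R\la} f_0 + \tfrac{1}{i}\partial_{\la} f_0$. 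Substituting the ansatz into $A_k Z_k(E_K) = Z_k(R)$, the leading-order equation is algebraic and is automatically satisfied because the symbol of $A_k$ vanishes on $\pi(\R\la)$ (the abelian locus is in the variety cut out by the classical limit of $P$, since $L-1$ factors out there). The subleading equation is a first-order inhomogeneous linear ODE for $f_0$ along $\R\la$, driven by the principal symbol of the Lagrangian-state expansion of $Z_k(R)$, which one computes explicitly from the rational expressions $P_\ell^{a,b}$ or the analogue for the figure eight.

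Solving this transport equation produces $f_0(q\la) = \tfrac{1}{\sqrt 2}(\si-\si^{-1})/\De_K(\si^2)$ with $\si = e^{2i\pi q}$: the denominator $\De_K(\si^2)$ emerges as the integrating factor from the symbol of $A_k$ restricted to $\R\la$ (one verifies directly from the formulas of Section \ref{sec:recurrence-relations} that this restriction is, up to a unit, the Alexander polynomial evaluated on $\R\la$), while the numerator comes from the Lagrangian expansion of the right-hand side. The integer $m$ and the normalization of $f_0$ are fixed at one point of $\R\la$ by matching with the solid-torus asymptotics of Theorem \ref{theo:solid-torus-state}: near $q_0=0$, the knot state differs from $Z_k(N)$ only through irreducible contributions, which are microlocally disjoint by the microsupport conjecture (proved for these knots in Theorems \ref{prop:torus_ab_preuve} and \ref{sec:8_theo}), so one inherits the phase and normalization from there, consistent with $\De_K(1)=1$. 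Higher-order amplitudes $f_j$ are obtained recursively by solving transport equations of the same type, which yields the full $\Ci$ asymptotic expansion.

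The main obstacle is the figure eight case, where one deals with a coupled $2\times 2$ system of Toeplitz operators instead of a scalar equation. The WKB ansatz must be made on the vector $(Z_k(J), Z_k(I))$, and one must microlocally diagonalize the principal symbol matrix on a neighborhood of $\R\la$ to isolate the eigenline corresponding to the abelian branch (as opposed to the eigenline that would support irreducible Lagrangian states, treated in \cite{LJ2}). The Alexander polynomial then arises as the relevant factor of the determinant of the symbol matrix restricted to $\R\la$, and one must check that the diagonalization is smooth throughout $V$---which is exactly the content of the regularity hypothesis $\De_K(e^{4i\pi q})\neq 0$ together with the absence of irreducible representations near $\pi(q_0\la)$.
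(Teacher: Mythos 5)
Your overall starting point (turn the $q$-difference relations into Toeplitz identities via \eqref{representation} and Theorem \ref{sec:Toeplitz_operators}) is the same as the paper's, but the central mechanism you propose is wrong. You claim that the leading-order equation is automatically satisfied ``because the symbol of $A_k$ vanishes on $\pi(\R\la)$'' and that $f_0$ is then determined by a transport ODE with $\De_K$ arising as an integrating factor. For the non-homogeneous recursions actually used (Propositions \ref{sec:recurrence-relations-torique}, \ref{sec:recurrence-relations-torique2}, \ref{sec:recurrence-relations-system}), the principal symbol of the recursion operator does \emph{not} vanish on $\R\la$ at regular points: for the $(a,b)$ torus knot the symbol of $\id-\tilde M^{-2ab}L^{-2}$ restricts on $\R\la$ to $1-\si^{2ab}$, which is nonzero away from $q\in\frac{1}{2ab}\Z$, and for the figure eight the determinant $ad-bc$ of the $2\times 2$ symbol matrix restricted to $\R\la$ is nonzero away from the non-regular point $\pi(\la/4)$ (and $q\in\frac12\Z$). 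It is exactly this \emph{ellipticity} near regular abelian points that the paper exploits: one microlocally inverts the recursion operator (or the $2\times2$ system), writes $Z_k(E_K)=STZ^0_k+O(k^{-\infty})$ near $\R\la$, and reads off $f_0$ \emph{algebraically} as the quotient of the symbol of the inhomogeneous side by the symbol of the recursion operator, restricted to $\R\la$, via Theorem \ref{sec:ground_state}; the Alexander polynomial comes out of this algebra (e.g. $\si^{ab}(\si^{a+b}-\si^{a-b}-\si^{-a+b}+\si^{-a-b})/(1-\si^{2ab})=-(\si-\si^{-1})/\De_{a,b}(\si^2)$), not from solving an ODE. The $(L-1)$ factor you invoke belongs to homogeneous relations in $\sA^K$; if you switch to such a relation you lose the inhomogeneous data that pins down $f_0$, and indeed the paper notes that the single equation $QJ^8=R$ is not sufficient for the figure eight. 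Likewise, since the symbol matrix is invertible on a neighborhood of the regular abelian points, there is no ``eigenline corresponding to the abelian branch'' to diagonalize onto; one simply inverts the system.

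Two further gaps: (i) a WKB ansatz shows at best that a Lagrangian state is \emph{consistent} with the recursion order by order; it does not show that $Z_k(E_K)$ is such a state. The paper avoids this by expressing $Z_k(E_K)$, microlocally near $\R\la$, as an explicit Toeplitz operator applied to $Z^0_k$, which is a genuine Lagrangian state by Proposition \ref{sec:base_lagrangian_states} and Theorem \ref{sec:ground_state}. (ii) Your normalization step — matching with the solid torus state near $q=0$ on the grounds that the knot state ``differs from $Z_k(N)$ only through irreducible contributions'' — is unjustified for a nontrivial knot and essentially assumes part of the conclusion; in the paper the phase $e^{im\pi/4}$ and the factor $\tfrac1{\sqrt2}$ come out of the explicit constants in Theorem \ref{sec:ground_state} (the $e^{3i\pi/4}/\sqrt2$ and the sign ambiguity of $\Psi_0'$), with no matching argument needed.
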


The section $t_\la $ can be explicitly computed in terms of the coordinates $p, q$ dual to the base $(\mu, \la)$. Let  $\tau$ be the complex parameter defined as in Section \ref{sec:torus-quantization}, then 
$$ t_\la = \exp \bigl( - 2 i \pi \bigl( q + \frac{p}{\tau} \bigr) p\bigr).  $$
For the trivial knot, the Alexander polynomial being the constant 1, the conjecture follows from Theorem \ref{theo:solid-torus-state}.

\subsection{Admissibility}

In this section we show that for any knot $K$ the family $(Z_k(E_K), \; k \in \Z_{>0} )$ is admissible, that is there exists a positive $C$ and an integer $M$ such that $||Z_k(E_K)||\le Ck^M$ for any $k$. Using the isomorphism of Theorem \ref{sec:equiv-top-geom}, this norm may be computed in the framework of TQFT. Using the axioms, one has 
$$\langle Z_k(E_K),Z_k(E_K)\rangle=Z_k(E_K\cup(-E_K)).$$ The manifold $E_K\cup(-E_K)$ is a closed 3-manifold called the double of the knot. The admissibility is then a consequence of the following lemma.

\begin{lem}
For any closed 3-manifold $M$, there exists a positive constant $C$ and an integer $N$ such that $|Z_k(M)|\le Ck^N$.
\end{lem}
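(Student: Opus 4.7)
The plan is to use a Heegaard splitting of $M$ together with the TQFT axioms to reduce the question to $Z_k$ of the double of a handlebody, which can be handled via the connected sum formula.

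Every closed oriented 3-manifold $M$ admits a Heegaard splitting $M=H_1\cup_\phi H_2$ of some genus $g$, where $H_1,H_2$ are handlebodies of genus $g$ identified along $\Si=\partial H_1$ by an orientation-reversing diffeomorphism $\phi$. Viewing $H_1$ (as a cobordism $\emptyset\to\Si$), the mapping cylinder $M_\phi$, and $\overline{H_2}$ (as a cobordism $\Si\to\emptyset$) as the successive pieces of $M$ and applying the composition rule (\ref{eq:comp_maslov}), I will write
\begin{gather*}
Z_k(M)=\tau_k^{m}\,\bigl\langle Z_k(\phi)\cdot Z_k(H_1),\; Z_k(H_2)\bigr\rangle_{V_k(\Si)}
\end{gather*}
for some integer $m$ independent of $k$, where $Z_k(\phi):=Z_k(M_\phi,\emptyset,\emptyset,0)$ and $Z_k(H_i)$ stands for $Z_k(H_i,\emptyset,\emptyset,0)$. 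Since $|\tau_k|=1$ and $Z_k(\phi)$ is unitary (being the image of a diffeomorphism under the functor), Cauchy--Schwarz will give
\begin{gather*}
|Z_k(M)|\le \|Z_k(H_1)\|\cdot \|Z_k(H_2)\|.
\end{gather*}

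The next step is to bound each handlebody norm by observing that $\|Z_k(H_i)\|^2=Z_k(D_i)$, where $D_i=H_i\cup_{\id}\overline{H_i}$ is the double of $H_i$, homeomorphic to the connected sum $\#^g(S^1\times S^2)$. By cutting along separating $S^2$'s and using $\dim V_k(S^2)=1$, I will deduce the connected sum formula $Z_k(A\#B)\cdot Z_k(S^3)=Z_k(A)\cdot Z_k(B)$; combined with $Z_k(S^1\times S^2)=\dim V_k(S^2)=1$ and $Z_k(S^3)=\sqrt{2/k}\sin(\pi/k)$ (a special case of (\ref{eq:jones})), this yields
\begin{gather*}
Z_k\bigl(\#^g(S^1\times S^2)\bigr)=Z_k(S^3)^{1-g}=O\bigl(k^{3(g-1)/2}\bigr),
\end{gather*}
so $\|Z_k(H_i)\|=O(k^{3(g-1)/4})$ and hence $|Z_k(M)|=O(k^{3(g-1)/2})$, which is the required bound.

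The main nuisance in carrying this out is the bookkeeping of the anomaly corrections coming from (\ref{eq:comp_maslov}) at each composition (the splitting of $M$, and the decomposition of $D_i$ into two copies of $H_i$). However, since $|\tau_k|=1$, these factors never influence absolute values, so they are a cosmetic difficulty rather than a genuine obstruction.
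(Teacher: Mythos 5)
Your proof is correct and follows essentially the same route as the paper: a Heegaard splitting, unitarity of $Z_k(\phi)$ together with Cauchy--Schwarz, and evaluation of the norm of the handlebody vector through its double $\#^g(S^1\times S^2)$ and the connected sum formula. The only divergence is the value used for $Z_k(S^2\times S^1)$ (you take $\dim V_k(S^2)=1$, the paper uses $k-1$), which merely changes the exponent $N$ and has no bearing on the polynomial bound.
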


This was proved by Garoufalidis in \cite{gar1} but we reproduce the argument for completeness. 

\begin{proof}
Recall that any closed 3-manifold has a Heegard decomposition, that is, it is obtained by gluing two handlebodies along their boundaries. 
Let $H_g$ be a handlebody of genus $g$ and $\Sigma_g$ be its boundary. A Heegard decomposition of $M$ of genus $g$ is an homeomorphism of $M$ with $H_g\cup_{\phi}(-H_g)$ for some diffeomorphism $\phi$ of $\Sigma_g$. Using the axioms of TQFT, this decomposition gives 
$$Z_k(M)=\langle Z_k(H_g),Z_k(\phi)Z_k(H_g)\rangle .$$ 
Using the unitarity of $Z_k(\phi)$ and Cauchy-Schwarz inequality, we get 
$$|Z_k(M)|\le ||Z_k(H_g)||^2=Z_k(H_g\cup(-H_g)).$$ 
On the other hand, the manifold $H_g\cup(-H_g)$ is homeomorphic to a connected sum of $g$ copies of $S^2\times S^1$. This gives $$Z_k(H_g\cup(-H_g))=\frac{Z_k(S^2\times S^1)^{g}}{Z_k(S^3)^{g-1}}.$$ 
As $Z_k(S^2\times S^1)=k-1$ and $Z_k(S^3)=\sqrt{2/k}\sin(\pi/k)$ we get 
$$||Z_k(H_g)||^2\underset{k\to\infty}{\sim} \frac{k^{5g/2-3/2}}{(\sqrt{2}\pi)^{g-1}}$$ which ends the proof.
\end{proof}

\subsection{Some microlocal results} \label{sec:basic-micr-prop}

Let us recall some basic results on microsupport and Toeplitz operators. 
First Toeplitz operators reduce microsupport in the following sense. For any admissible family $(\xi_k)$ and Toeplitz operator $(T_k)$, we have 
\begin{gather} \label{eq:top_micsup}
\MS( T_k \xi_k) \subset \MS ( \xi_k).
\end{gather}
Let $(T_k)$ and $(S_k)$ be two Toeplitz operators of $\Hilb_k$. Assume that $T_k \equiv S_k$ on an open set $U$ of $E$, in the sense that the total symbols of $T_k$ and $S_k$ are the same on $U$. Then for any admissible family  $(\xi_k)$, we have $ T_k \xi_k = S_k \xi_k + O(k^{-\infty})$ on $U$.

Our proof of Conjectures \ref{sec:conj_microsupport} and \ref{conj:abelian} for the torus knots and the figure eight knot starts with the analysis of the state 
\begin{gather} \label{eq:def_ground_state}
 Z^0_k = \frac{1}{2i \sqrt k} \sum_{\ell \in \Z / 2k \Z}   \Psi_\ell 
\end{gather}
of $\Hilb_k$.  Introduce as in the statement of Conjecture \ref{conj:abelian} the section $t_\la$ of $L$ and the vector $\Om_\la$ of $\delta$.  
\begin{theo} \label{sec:ground_state}
Let  $(T_k)$ be a Toeplitz operator of $\Hilb_k$ with principal symbol $f \in \Ci( E)$.
Then the microsupport of $(T_k Z^0_k)$ is contained in $\R \la + \Z \mu$. On a neighborhood of $\R \la$ one has 
\begin{gather} \label{eq:toep_lag}
 T_k Z_k^0 =  \frac{ e^{3i \pi/4}} { \sqrt 2} \Bigl( \frac{k}{2\pi} \Bigr)^{1/4} g( \cdot, k) t_\la^k  \otimes \Om_{\la}  + O( k^{-\infty} ) 
\end{gather}
where $g(\cdot, k)$ is a sequence of $\Ci (E)$ admitting an asymptotic expansion of the form $ g_0 + k^{-1} g_1 + \ldots $ on a neighborhood of $\R \la$. Furthermore $g_0 = f $ on $\R \la$. 
\end{theo}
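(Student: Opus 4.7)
My plan is to identify $Z_k^0$ with a distinguished eigenstate for a well chosen Heisenberg basis, and then import the conclusion from Proposition \ref{sec:base_lagrangian_states}. First, I would observe that reindexing $\ell \mapsto \ell+1$ leaves the sum defining $Z_k^0$ invariant, so by (\ref{eq:rep_heis}) the vector $Z_k^0$ lies in $\ker(T^*_{\la/2k}-\id)$. By Theorem \ref{sec:Toeplitz_operators}, $T^*_{\la/2k}-\id$ is a Toeplitz operator with principal symbol $\exp(i\om(\cdot,\la)/2)-1$ whose zero set, writing $x=p\mu+q\la$ and using $\om(\mu,\la)=4\pi$, equals $\R\la+\Z\mu$. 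Microlocal ellipticity of Toeplitz operators then gives $\MS(Z_k^0)\subset \R\la+\Z\mu$, and the reduction property (\ref{eq:top_micsup}) transfers this bound to $T_kZ_k^0$, proving the first assertion.

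Next, I would work with the positively oriented basis $(\mu',\la')=(\la,-\mu)$ of $R$ and introduce the associated orthonormal basis $(\Psi'_\ell)$ of $\Hilb_k$ furnished by Theorem \ref{sec:rep_Heisenberg}. Since the $2k$ eigenvalues $e^{i\ell\pi/k}$ of $T^*_{\la/2k}$ are pairwise distinct, the eigenspace for the eigenvalue $1$ is one-dimensional and spanned by $\Psi'_0$; hence $Z_k^0$ must be a scalar multiple of $\Psi'_0$. To pin down the scalar I would invoke the transformation formula (\ref{eq:trans_S}) from the proof of Theorem \ref{sec:geo_isomorphism}, which gives $\Psi'_0 = \pm e^{-i\pi/4}(2k)^{-1/2}\sum_\ell \Psi_\ell$. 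Combined with the definition (\ref{eq:def_ground_state}) of $Z_k^0$ and the identity $e^{i\pi/4}/i = e^{-i\pi/4} = -e^{3i\pi/4}$, this yields, after fixing the sign ambiguity of $\Om_\la$,
$$Z_k^0 = \frac{e^{3i\pi/4}}{\sqrt{2}}\Psi'_0.$$

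Then I would apply Proposition \ref{sec:base_lagrangian_states} with the substitution $(\mu,\la)\leftarrow(\la,-\mu)$: the distinguished holomorphic section of $L$ flat along $\R\mu'=\R\la$ is exactly $t_\la$, and $\Om_{\mu'}=\Om_\la$, so on a neighborhood of $\R\la$ one obtains
$$\Psi'_0(x) = \Bigl(\frac{k}{2\pi}\Bigr)^{1/4} t_\la^k(x)\otimes\Om_\la + O(e^{-k/C}).$$
Multiplying by the constant from Step 2 gives the statement of the theorem in the special case $T_k=\id$ (with $g\equiv 1$, so that $g_0=f$ on $\R\la$ trivially holds).

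Finally, for a general Toeplitz operator $T_k$ with principal symbol $f$, I would invoke the symbolic calculus for the action of Toeplitz operators on Lagrangian states (the standard result of \cite{l3}, see also the appendix): applied to the rank-one Lagrangian state $(\frac{k}{2\pi})^{1/4} t_\la^k\otimes\Om_\la$ supported on the Lagrangian $\R\la$, the operator $T_k$ produces a Lagrangian state on the same Lagrangian whose amplitude $g(\cdot,k)$ has a $\Ci$ asymptotic expansion $g_0+k^{-1}g_1+\cdots$ with $g_0=f$ on $\R\la$. Composing with Steps 2--3 gives (\ref{eq:toep_lag}). I expect the main obstacle to be the careful bookkeeping of the phase $e^{3i\pi/4}$ in Step 2, which rests on the Poisson summation identity underlying (\ref{eq:trans_S}); by contrast, Steps 1 and 4 are routine applications of microsupport propagation and of the symbolic calculus for Toeplitz operators.
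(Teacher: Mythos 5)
Your proposal is correct and follows essentially the same route as the paper: identify $Z_k^0$ (up to the constant $e^{3i\pi/4}/\sqrt{2}$ and the sign of $\Om_\la$) with the ground state $\Psi_0(\la,-\mu,j,\delta)$ of the basis associated to $(\la,-\mu)$, describe it near $\R\la$ via Proposition \ref{sec:base_lagrangian_states}, and conclude with the action of Toeplitz operators on Lagrangian states from \cite{l3}, \cite{l4}. The only cosmetic differences are that you fix the scalar directly from the exact relation (\ref{eq:trans_S}) rather than by evaluating $\sum_\ell\Psi_\ell$ at $0$, and you add an (equivalent) ellipticity argument for the microsupport bound.
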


 Observe also that the points $x \in \R \la$ such that $g_0 (x) \neq 0$ belong to the microsupport of $T_k Z^0_k$.  

\begin{proof} 
In the proof of Proposition \ref{sec:geo_isomorphism}, we showed that the value of $\frac{1}{\sqrt{2k}}\sum   \Psi_\ell $ at $0$ is 
$$   e^{i \pi/4}\Bigl( \frac{k}{2\pi} \Bigr)^{1/4}  \Theta ( 0, -1/\tau) \Om_{\la}  =   e^{i \pi/4}\Bigl( \frac{k}{2\pi} \Bigr)^{1/4}  \Om_{\la} +   O( k^{-\infty} )
$$
Furthermore $\sum   \Psi_\ell $ is an eigenstate of $T_{\la/2k} ^* $ with eigenvalue $1$. So if we let $( \Psi'_\ell)$ be one of the two basis of $\Hilb_k$ associated to the basis   $(\la, - \mu)$ of $R$ as in Proposition \ref{sec:rep_Heisenberg}, we have that 
$$ Z^0_k = \pm \frac{ e^{3i \pi/4}} { \sqrt 2} \Psi'_0 +  O( k^{-\infty} )$$
Now by Proposition \ref{sec:base_lagrangian_states}, the microsupport of $(Z_k^0)$ is $\R \la + \Z \mu$ and we have 
$$  Z_k^0  = \frac{ e^{3i \pi/4}} { \sqrt 2} \Bigl( \frac{k}{2\pi} \Bigr)^{1/4} t_\la^k  \otimes \Om_{\la}  + O( k^{-\infty} ) $$ 
on a neighborhood of $\R \la$. Since Toeplitz operators reduce microsupport, the  microsupport of $(T_k Z^0_k)$ is contained in $\R \la + \Z \mu$. The second part of the result follows from  Proposition 2.7 of \cite{l3} and Theorem 3.3 of \cite{l4} describing the action of a Toeplitz operator on a Lagrangian state.
\end{proof}

\subsection{Torus knots and figure eight knot} 

In this part we prove Conjecture \ref{conj:abelian} and compute partially the knot state microsupport for the torus knots and the figure eight knot. During the proof we use the operators 
$$ M = T^* _{ \mu/2k} , \qquad L =  T^*_{- \la /2k} $$
By Theorem \ref{sec:Toeplitz_operators}, these are Toeplitz operators whose symbols are respectively 
$$ \sM = \exp ( -2 i \pi q) , \qquad \sL = \exp ( - 2 i \pi p) $$
where $p$ and $q$ are the linear coordinates of $E$ dual to  the basis $( \mu, \la)$. Observe that the restriction of $\sM$ and $\sL$ at $\la \R$ are respectively $\si^{-1} $ and 1 where $\si$ is the function on $\R \la$ given by $\si ( q \la) = e^{2i \pi q}$.

Let us give a second proof of Conjectures \ref{sec:conj_microsupport} and \ref{sec:conj_abel_repr} for the trivial knot. Recall that its $\ell$-th Jones polynomial is
$$  J_\ell (t) = \frac{ t^{2 \ell} - t^{-2 \ell} } { t^2  -t ^{-2}}$$   
Let $D_0 = \la \R + \mu \Z$.

\begin{prop} 
The microsupport of the trivial knot state is $D_0$. Furthermore conjecture \ref{conj:abelian} holds for the trivial knot. 
\end{prop}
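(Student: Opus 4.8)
The plan is to identify the trivial knot's state with the distinguished state $Z^0_k$ of Section~\ref{sec:basic-micr-prop} and then invoke Theorem~\ref{sec:ground_state}. First I would compute $J_\ell$ for the trivial knot at the relevant root of unity: since $J_\ell(t) = (t^{2\ell}-t^{-2\ell})/(t^2-t^{-2})$, substituting $t = -e^{i\pi/2k}$ gives $J_\ell(-e^{i\pi/2k}) = \sin(\pi\ell/k)/\sin(\pi/k)$. Plugging this into formula~\eqref{eq:etat_noeud}, the prefactor $\sin(\pi/k)/\sqrt k$ cancels the denominator and one obtains
$$ Z_k(E_K) = \frac{1}{\sqrt k}\sum_{\ell\in\Z/2k\Z} \sin\Bigl(\frac{\pi\ell}{k}\Bigr)\Psi_\ell = \frac{1}{2i\sqrt k}\sum_{\ell\in\Z/2k\Z}\bigl(e^{i\pi\ell/k}-e^{-i\pi\ell/k}\bigr)\Psi_\ell. $$
The second sum runs over a full period $\Z/2k\Z$, so reindexing $\ell\mapsto -\ell$ in the second term and using $2k$-periodicity of $\Psi_\ell$ shows the two exponential sums are equal; hence $Z_k(E_K) = \frac{1}{2i\sqrt k}\sum_\ell \Psi_\ell = Z^0_k$ up to the conventions fixed when identifying $V_k(\Si,\nu)$ with $\Hilb_k^{\alt}$.

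Next I would apply Theorem~\ref{sec:ground_state} with $(T_k)$ equal to the identity, whose principal symbol is $f\equiv 1$. This immediately gives that the microsupport of $(Z^0_k)$ is contained in $\R\la + \Z\mu = D_0$, and the remark after the theorem (points where $g_0\neq 0$ lie in the microsupport) together with $g_0 = f = 1$ on $\R\la$ gives the reverse inclusion, so $\MS(Z_k(E_K)) = D_0$ exactly. For the second assertion, Theorem~\ref{sec:ground_state} yields on a neighborhood of $\R\la$
$$ Z^0_k = \frac{e^{3i\pi/4}}{\sqrt 2}\Bigl(\frac{k}{2\pi}\Bigr)^{1/4} g(\cdot,k)\, t_\la^k\otimes\Om_\la + O(k^{-\infty}) $$
with $g_0 = 1$ on $\R\la$. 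Comparing with the form predicted by Conjecture~\ref{conj:abelian}: there $\Delta_K \equiv 1$ for the trivial knot, so $f_0(q\la) = \frac{1}{\sqrt 2}(\si-\si^{-1}) = \frac{1}{\sqrt 2}\cdot 2i\sin(2\pi q)$. One has to reconcile this with $g_0 \equiv 1$: the point is that $\Om_\la$ in the conjecture is normalized by $\Om_\la^2(\la)=1$, whereas the $\Om_\la$ appearing in Theorem~\ref{sec:ground_state} (coming from the $(\la,-\mu)$ basis via Proposition~\ref{sec:rep_Heisenberg}) is the half-form attached to that basis; the discrepancy between the two is exactly a factor behaving like $\sqrt 2\sin(2\pi q)$ on $\R\la$, absorbing the difference and also matching the phase $e^{3i\pi/4} = e^{im\pi/4}$ with $m=3$.

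The main obstacle I anticipate is precisely this bookkeeping of half-form normalizations and phases: one must carefully track how $\Om_\mu$, $\Om_\la$ and the section $\sigma$ of Theorem~\ref{theo:solid-torus-state} relate under the identity $\Om_\mu^2 = \tau\,\Om_\la^2$ used in the proof of Theorem~\ref{sec:geo_isomorphism}, and confirm that the ground-state value computed there, $e^{i\pi/4}(k/2\pi)^{1/4}\Theta(0,-1/\tau)\Om_\la$, produces after the $T^*_{\ell\la/2k}$-shifts exactly the $\sqrt2\sin(2\pi q)$ profile. A clean way to sidestep most of this is to appeal directly to Theorem~\ref{theo:solid-torus-state}: the trivial knot exterior $E_K$ is itself a solid torus $N$, with $F$ the line spanned by the longitude $\la$ (since $\la$ bounds in $E_K$), so $Z_k(E_K) = Z_k(N)$ and the theorem already asserts both the localization on $F + R = D_0$ away from which the state is $O(e^{-k/C})$ (a fortiori $O(k^{-\infty})$), and the Lagrangian expansion with leading half-form $\sigma(s\la) = \sqrt2\sin(2\pi s)\Om_\la$ — which is verbatim the $f_0$ of Conjecture~\ref{conj:abelian} when $\Delta_K\equiv 1$. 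Thus the proof reduces to: (i) recording $E_K\cong N$ and $F = \R\la$, (ii) quoting Theorem~\ref{theo:solid-torus-state} for both statements, and (iii) noting that the constant $\theta_k = 1 + O(k^{-1})$ and integer $m$ there match the shape required, with the microsupport equality following from the non-vanishing of $\sin(2\pi s)$ off $\frac12\la + \mu\Z$ combined with $-\id_E$-invariance.
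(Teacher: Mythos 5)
Your first route contains a genuine error. From (\ref{eq:etat_noeud}), the trivial knot state is $\frac{1}{\sqrt k}\sum_{\ell\in\Z/2k\Z}\sin(\pi\ell/k)\Psi_\ell$, and this is \emph{not} $Z^0_k$: reindexing $\ell\mapsto-\ell$ turns $\sum_\ell e^{-i\pi\ell/k}\Psi_\ell$ into $\sum_\ell e^{i\pi\ell/k}\Psi_{-\ell}$, which is not $\sum_\ell e^{i\pi\ell/k}\Psi_\ell$ since the $\Psi_\ell$ are linearly independent; note also that if the two exponential sums really were equal, the difference you display would be $0$, not $Z^0_k$. The correct identity is $Z_k(E_K)=(M-M^{-1})Z^0_k$ with $M=T^*_{\mu/2k}$, because $M\Psi_\ell=e^{i\pi\ell/k}\Psi_\ell$. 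As a consequence, the attempted repair --- attributing the missing factor $\sqrt2\sin(2\pi q)$ to a discrepancy in the normalization of $\Om_\la$ --- cannot work: the $\Om_\la$ of Theorem \ref{sec:ground_state} is, by its statement, the same vector normalized by $\Om_\la^2(\la)=1$ as in Conjecture \ref{conj:abelian}, and in any case two choices of a fixed vector of the line $\delta$ differ by a constant, never by a function of $q$. The factor you are missing is precisely the principal symbol of $M-M^{-1}$, whose restriction to $\R\la$ is $\si^{-1}-\si$. This is exactly how the paper argues: it writes the trivial knot state as $(M-M^{-1})Z^0_k$, applies Theorem \ref{sec:ground_state} to the Toeplitz operator $M-M^{-1}$, gets $g_0=\si^{-1}-\si$ on $\R\la$ (which is $\sqrt2 f_0$ up to a phase $e^{im\pi/4}$), and concludes $\MS=D_0$ because this symbol vanishes only at isolated points and the microsupport is closed.

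Your fallback via Theorem \ref{theo:solid-torus-state} is, by contrast, a complete and correct proof: the trivial knot exterior is a solid torus with $F=\R\la$, so that theorem gives exponential decay off $F+R=D_0$ and the Lagrangian expansion with leading profile $\sqrt2\sin(2\pi s)\Om_\la$ along $F$, which agrees with $\frac{1}{\sqrt2}(\si-\si^{-1})$ up to a factor $i$, absorbed in $e^{im\pi/4}$; equality $\MS(Z_k(E_K))=D_0$ then follows because the leading coefficient vanishes only at the isolated points $s\in\frac12\Z$ and the microsupport is closed (closedness, not $-\id_E$-invariance, is what fills in these zeros). This is in fact the route the paper itself indicates right after stating the two conjectures; the proof printed with the proposition is explicitly a second proof, via the Toeplitz factorization, whose interest is that it is the template for the torus-knot and figure-eight arguments. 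So either keep only your solid-torus argument, or repair the first half by replacing the identity operator with $M-M^{-1}$, which recovers the paper's proof.
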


\begin{proof} 
Using the above formula for the Jones polynomial and the expression (\ref{eq:etat_noeud}) for the knot state, we deduce that the trivial knot state is  $$    (M - M^{-1} )Z_k^0 $$
where $Z^0_k$ is given by (\ref{eq:def_ground_state}). 
So by Theorem \ref{sec:ground_state}, the microsupport of $Z_k$ is contained in $D_0$. 
The restriction of the symbol of $M - M^{-1}$ to $\R \la$ is $-\si + \si ^{-1}$. So Conjecture \ref{conj:abelian} follows from the second part of  Theorem \ref{sec:ground_state}. Furthermore since $-\si + \si ^{-1}$ vanishes at isolated points, the microsupport of $Z_k$ is $D_0$.
\end{proof}

Let us consider the torus knots. Denote by $E_{a,b}$ the exterior of the torus knot with parameter $(a,b)$. For any rational $s$, denote by $D_s$ the subset $(s \mu - \la) \R + R $ of $E$.

\begin{theo} \label{prop:torus_ab_preuve}
The microsupport of $(Z_k( E_{a,b}))$ satisfies $$D_0 \subset \MS( Z_k ( E_{a,b}) ) \subset D_0 \cup D_{ab} \cup  ( D_{ab} + \tfrac{1}{2ab} \la).$$
Conjecture \ref{conj:abelian} holds for torus knot with parameter $(a,b)$ for any point in $\R \la \setminus \frac{1}{2ab} \N \la$.  
\end{theo}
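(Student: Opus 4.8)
The plan is to exploit the non-homogeneous recurrence relation of Proposition \ref{sec:recurrence-relations-torique} together with the correspondence of Equation \eqref{representation} between the shift operators $M,L$ acting on $\sP$ and the Toeplitz operators $T^*_{\mu/2k}$, $T^*_{-\la/2k}$ acting on $\Hilb_k$, all applied to the base state $Z^0_k$ of \eqref{eq:def_ground_state}. First I would rewrite the Morton formula for $J^{a,b}_\ell$ so as to exhibit $Z_k(E_{a,b})$ as $P(M,L) Z^0_k$ for an explicit element $P$ of the quantum torus algebra $\sT$: indeed, since $J^{a,b}$ satisfies $J^{a,b}_\ell = P^{a,b}_\ell + t^{4ab(1-\ell)} J^{a,b}_{\ell-2}$ and the inhomogeneous term $P^{a,b}_\ell$ is itself a finite $\C[t^{\pm 1}]$-linear combination of terms of the form $t^{c(1-\ell)}$, each such sequence produces (via $Z_k$) a Toeplitz operator applied to $Z^0_k$. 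The upshot is a closed expression $Z_k(E_{a,b}) = A_k Z^0_k + B_k Z^0_k$ where $A_k$ accounts for the ``telescoped'' homogeneous part (a sum over the $N$ residues coming from the geometric-type series with ratio $t^{4ab(1-\ell)}$, which is responsible for the $D_{ab}$ and $D_{ab}+\tfrac{1}{2ab}\la$ branches) and $B_k$ is a genuine Toeplitz operator coming from the inhomogeneous term (responsible for the $D_0$ branch).

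Next I would apply Theorem \ref{sec:ground_state}: since each piece is a Toeplitz operator (or a $T^*_\nu$-shifted Toeplitz operator, again a Toeplitz operator by Theorem \ref{sec:Toeplitz_operators}) applied to $Z^0_k$, and $Z^0_k$ has microsupport $\R\la + \Z\mu$, the microsupport of each shifted piece $T^*_{s\mu/k - \la/k}(\text{Toeplitz}) Z^0_k$ is contained in the translate of $\R\la+\Z\mu$ by the corresponding direction, i.e. in $D_{ab}$ or $D_{ab}+\tfrac{1}{2ab}\la$; the inhomogeneous Toeplitz piece contributes $D_0$. Summing and using that microsupport of a sum is contained in the union of microsupports gives the inclusion $\MS(Z_k(E_{a,b})) \subset D_0 \cup D_{ab} \cup (D_{ab}+\tfrac{1}{2ab}\la)$. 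For the lower bound $D_0 \subset \MS(Z_k(E_{a,b}))$ and for Conjecture \ref{conj:abelian}, I would restrict attention to a neighborhood of a regular point $q\la$ with $q\notin \tfrac{1}{2ab}\N$: there the $D_{ab}$-branches do not pass through (so their contributions are $O(k^{-\infty})$ near $q\la$), and only the inhomogeneous Toeplitz term survives. Its principal symbol restricted to $\R\la$ can be computed from $P^{a,b}_\ell$ by setting $t = -e^{i\pi/2k}$, $q^n \to \si^{-1}$, and taking the leading $k\to\infty$ behaviour; one checks via the explicit formula for $\Delta_{a,b}$ that this symbol equals, up to the universal constant $\tfrac{1}{\sqrt2}$ and a phase, $\tfrac{\si-\si^{-1}}{\Delta_{a,b}(\si^2)}$, which is exactly $f_0(q\la)$ in Conjecture \ref{conj:abelian}. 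Theorem \ref{sec:ground_state} then delivers the full Lagrangian-state description with the asymptotic expansion, and since $f_0$ vanishes only at isolated points, $D_0$ is contained in the microsupport.

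The main obstacle I anticipate is bookkeeping the telescoped homogeneous part cleanly: summing $\sum_{\ell} t^{4ab(1-\ell)} J^{a,b}_{\ell-2}$ over $\ell \in \Z/2k\Z$ at $t=-e^{i\pi/2k}$ is a finite geometric-type manipulation, but turning it rigorously into ``(Toeplitz operator) $\circ$ $T^*_{\nu/k}$ applied to $Z^0_k$'' requires care about the periodicity in $\sP$ (Lemma-type statements as in Section \ref{sec:recurrence-relations}), about the denominators $q-q^{-1}$, $1-t^{-4}$ appearing in $P^{a,b}_\ell$ becoming $O(k)$-sized resolvents (which is why one only gets a Toeplitz operator away from the bad locus $\tfrac{1}{2ab}\N\la$ where these denominators vanish to leading order), and about identifying the precise translate directions $D_{ab}$ and $D_{ab}+\tfrac{1}{2ab}\la$ from the exponents $4ab(1-\ell)$ and the half-integer shift hidden in Morton's summation range $r = (1-\ell)/2,\dots,(\ell-1)/2$. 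This is also why the statement only claims an inclusion for the microsupport rather than equality on the $D_{ab}$ branches, and defers the sharp computation there to \cite{Ctoric}.
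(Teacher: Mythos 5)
Your proposal correctly assembles the right ingredients (Proposition \ref{sec:recurrence-relations-torique}, Equation \eqref{representation}, the state $Z^0_k$, Theorem \ref{sec:ground_state}), but two steps in your strategy are wrong. The ``translation of microsupport'' step does not hold: the operators $M$, $L$, $\tilde M^{-2ab}L^{-2}$, etc.\ are of the form $T^*_{\nu/k}$ with $\nu$ fixed, so they are Toeplitz operators by Theorem \ref{sec:Toeplitz_operators}, and Toeplitz operators \emph{reduce} microsupport (Equation \eqref{eq:top_micsup}) rather than translating it. Thus every term in your proposed decomposition $A_k Z^0_k + B_k Z^0_k$ would actually concentrate on $D_0$, not on the $D_{ab}$ branches; the decomposition you describe cannot be set up this way. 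Relatedly, computing the principal symbol on $\R\la$ from $P^{a,b}_\ell$ alone yields $\si^{ab}(\si^{a+b}-\si^{a-b}-\si^{-a+b}+\si^{-a-b})$ (after the $q-q^{-1}$ normalization cancels against $Z^0_k$), which is not $\pm(\si-\si^{-1})/\Delta_{a,b}(\si^2)$; the crucial missing factor $1/(1-\si^{2ab})$ can only come from inverting $\id - \tilde M^{-2ab}L^{-2}$, i.e.\ from the homogeneous part you claimed ``does not pass through'' and hence dropped.

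The paper's proof is the operator-theoretic version of your telescoping. It rewrites the recurrence as the identity $(\id - \tilde M^{-2ab}L^{-2})\,Z_k(E_{a,b}) = T\, Z^0_k$ in $\Hilb_k$, with $T$ a Toeplitz operator with Laurent-polynomial symbol, and then, at any point $x \notin D_{ab}\cup(D_{ab}+\tfrac{1}{2ab}\la)$ — exactly the complement of the characteristic set of $\id - \tilde M^{-2ab}L^{-2}$ — constructs a microlocal inverse Toeplitz operator $S$ with $S(\id - \tilde M^{-2ab}L^{-2})\equiv \id$ near $x$, so $Z_k(E_{a,b}) = ST Z^0_k + O(k^{-\infty})$ near $x$. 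Theorem \ref{sec:ground_state} then gives both the rapid decay off $D_0$ and the Lagrangian-state expansion on $\R\la$ with leading symbol $(\text{symbol of }T)/(\text{symbol of }\id - \tilde M^{-2ab}L^{-2})$ restricted to $\R\la$, which is $-(\si-\si^{-1})/\Delta_{a,b}(\si^2)$. The $D_{ab}$ branches sit in the upper bound because the microlocal inversion \emph{fails} there and nothing is concluded, not because any term is shown to concentrate there. The geometric series you propose to sum directly (with an $\ell$- and $k$-dependent number of terms, a phase quadratic in the summation index, and wrap-around from the $2k$-periodicity of $\sP$) is precisely what the microlocal inverse replaces; summing it explicitly would be harder and is unnecessary.
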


Conjecture \ref{sec:conj_microsupport} holds also for the torus knots but the proof requires more analysis, cf \cite{Ctoric}.

\begin{proof}
The recurrence equation of Proposition \ref{sec:recurrence-relations-torique} implies that 
\begin{gather} \label{eq:eq_torus}
   \bigl( \id - \tilde  M^{-2 ab} L^{-2}  \bigr)  Z_k (E_{a,b}) = T Z^0_k
 \end{gather}
 with $\tilde M = q^{-1} M$ and
\begin{xalignat*}{2}
 & T = \tilde M ^{-ab} \bigl(  q  \tilde  M ^{a+b}-  q^{-1} \tilde M ^{a-b} -   q^{-1} \tilde M ^{-a +b}   + q  \tilde M ^{- a - b }\bigr).
\end{xalignat*}
The symbol of $\id - \tilde  M^{-2 ab} L^{-2}$ vanishes exactly on $D_{ab} \cup( \frac{1}{2ab} \la +  D_{ab} )$. By the symbolic calculus of Toeplitz operator, for any $x \notin D_{ab} \cup( \frac{1}{2ab} \la +  D_{ab} )$ there exists a Toeplitz operator $S$ such that $S(\id - \tilde  M^{-2 ab} L^{-2}) \equiv \id$ on a neighborhood of $x$. So Equation \eqref{eq:eq_torus} implies that
$$  Z_k (E_{a,b}) = S T Z^0_k + O( k^{-\infty})$$ on a neighborhood of $x$. If $x \notin D_0$ we conclude from Theorem \ref{sec:ground_state} that $ Z_k (E_{a,b})$ is a $O(k^{-\infty})$ on a neighborhood of $x$. If $x \in D_0$, we have that $ Z_k (E_{a,b})$ has the form (\ref{eq:toep_lag}) on a neighborhood of $x$ with $ g_0$ equal on  $\R \la $ to the symbol of $ST$. The symbol of $S$ is equal to the inverse of the symbol of $\id - \tilde  M^{-2 ab} L^{-2}$ on a neighborhood of $x$.  We deduce that on $\R \la$ 
\begin{xalignat*}{2}
 g_0 &  = \frac{ \si^{ab} ( \si^{a+b} - \si^{a-b } - \si^{ -a + b} + \si^{-a -b } )}{1 - \si^{2ab} } \\
 & = \frac{ ( \si^a - \si ^{-a} ) ( \si^ b  - \si^{ -b} ) }{\si^{-ab} - \si ^{ab}} \\
 & = -  \frac{ \si - \si^{-1} } { \Delta_{a,b} ( \si^2)}
\end{xalignat*} 
which ends the analysis of Proposition \ref{sec:recurrence-relations-torique}.
\end{proof}

For $a=2$ we can deduce a more precise result from proposition \ref{sec:recurrence-relations-torique2}. 
\begin{theo} 
The microsupport of $(Z_k( E_{2,b}))$ satisfies $$D_0 \subset \MS( Z_k ( E_{2,b}) ) \subset D_0  \cup  ( D_{2b} + \tfrac{1}{4b} \la).$$ 
Conjecture \ref{conj:abelian} holds for torus knot with parameter $(2,b)$ for any point in $\R \la \setminus (\frac{1}{4b} + \frac{1}{2b} \N ) \la$.  
\end{theo}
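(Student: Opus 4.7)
My plan is to mimic the proof of Theorem \ref{prop:torus_ab_preuve} but using the finer recurrence from Proposition \ref{sec:recurrence-relations-torique2}, which is first order in the Jones index instead of second order. The gain in precision comes from the fact that a first order recurrence produces an operator equation whose symbol has only one vanishing sheet (instead of two), explaining the sharper containment $\MS(Z_k(E_{2,b})) \subset D_0 \cup (D_{2b} + \tfrac{1}{4b}\la)$.

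First, I would rewrite the recurrence of Proposition \ref{sec:recurrence-relations-torique2} in operator form. Setting $P = \id + q^b M^{-2b}L^{-1}$ and using $M\Psi_\ell = q^\ell \Psi_\ell$ together with the identity $q - q^{-1} = 2i\sin(\pi/k)$, a short computation analogous to the one in the proof of Theorem \ref{prop:torus_ab_preuve} yields
\begin{equation*}
 P \, Z_k(E_{2,b}) \;=\; T' Z^0_k, \qquad T' \;=\; q^{b-1}M^{2-b} - q^{b+1}M^{-(2+b)}.
\end{equation*}
Here $T'$ is a Toeplitz operator and $Z_k^0$ is the state from Section \ref{sec:basic-micr-prop}. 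By Theorem \ref{sec:Toeplitz_operators} the principal symbol of $P$ is $1 + \sM^{-2b}\sL^{-1} = 1 + e^{4i\pi b q + 2i\pi p}$ in the coordinates $(p,q)$ dual to $(\mu,\la)$, and this vanishes precisely when $p + 2bq \equiv \tfrac12 \pmod 1$, i.e., exactly on $D_{2b} + \tfrac{1}{4b}\la$.

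For the microsupport containment, I would argue as in the earlier proof: at any $x \notin D_0 \cup (D_{2b} + \tfrac{1}{4b}\la)$, the symbolic calculus produces a local Toeplitz inverse $S$ of $P$ near $x$, hence $Z_k(E_{2,b}) = S T' Z_k^0 + O(k^{-\infty})$ in a neighborhood of $x$. If moreover $x \notin D_0$, Theorem \ref{sec:ground_state} gives $S T' Z_k^0 = O(k^{-\infty})$ near $x$. This yields the upper bound on $\MS(Z_k(E_{2,b}))$. For the lower bound $D_0 \subset \MS(Z_k(E_{2,b}))$, I would use that Theorem \ref{sec:ground_state} shows the leading symbol of $ST'$ restricted to $\R\la$ is nonvanishing at generic points, which combined with $R$-invariance and antisymmetry forces $D_0$ into the microsupport.

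To establish Conjecture \ref{conj:abelian} at a point $q_0\la$ with $q_0 \notin \frac{1}{4b} + \frac{1}{2b}\N$, the local inverse $S$ of $P$ exists on a neighborhood of $q_0\la$ and Theorem \ref{sec:ground_state} describes $ST'Z_k^0$ as a Lagrangian state on $\R\la$ with principal symbol $g_0 = \operatorname{symb}(T')/\operatorname{symb}(P)$ computed on $\R\la$. Substituting $\sM|_{\R\la} = \si^{-1}$ and $\sL|_{\R\la} = 1$ gives
\begin{equation*}
 g_0 \;=\; \frac{\si^{b-2} - \si^{b+2}}{1 + \si^{2b}}.
\end{equation*}
The main (but routine) calculation is to compare this with the prediction of the conjecture. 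Using $\Delta_{2,b}(\si^2) = \si^{1-b}(\si^{2b}+1)/(\si^2+1)$, one checks after clearing denominators that $g_0 = -(\si - \si^{-1})/\Delta_{2,b}(\si^2)$, which matches the required Alexander polynomial form from Conjecture \ref{conj:abelian} up to a factor of $-\sqrt{2}$. The remaining phase discrepancy is an eighth root of unity and is therefore absorbed into the integer $m$ allowed by the conjecture. The main obstacle is only bookkeeping: tracking that the exceptional locus $(\tfrac{1}{4b} + \tfrac{1}{2b}\N)\la$ matches exactly the image of $D_{2b} + \tfrac{1}{4b}\la$ on $\R\la$ modulo $R$, which it does since $1 + \si^{2b}$ vanishes precisely for $q \in \frac{1}{4b} + \frac{1}{2b}\Z$.
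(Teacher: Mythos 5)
Your proposal is correct and follows essentially the same route as the paper: translate the first-order recurrence of Proposition \ref{sec:recurrence-relations-torique2} into the operator identity $(\id + q^b M^{-2b}L^{-1})Z_k(E_{2,b}) = q^bM^{-b}(q^{-1}M^2-qM^{-2})Z_k^0$, observe that the symbol of the left-hand operator vanishes only on $D_{2b}+\tfrac{1}{4b}\la$, and then repeat the microlocal argument of Theorem \ref{prop:torus_ab_preuve}, checking that the symbol ratio on $\R\la$ reduces to $-(\si-\si^{-1})/\Delta_{2,b}(\si^2)$. Only a small wording caveat: the factor $\sqrt 2$ in your ``$-\sqrt 2$ discrepancy'' is not absorbed into $m$ but is supplied by the prefactor $e^{3i\pi/4}/\sqrt 2$ in Theorem \ref{sec:ground_state}, leaving only an eighth root of unity for $m$, exactly as in the general torus knot case.
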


\begin{proof}
The recurrence formula of Proposition \ref{sec:recurrence-relations-torique2} implies that the following equation holds:
\begin{gather} \label{eq:eq_torus_2}
   \bigl( \id + q^b M^{-2 b} L^{-1}  \bigr)  Z_k (E_{2,b}) = q^bM^{-b}(q^{-1}M^2-qM^{-2}) Z^0_k
 \end{gather}
 We deduce from it the assertion on microsupport because the symbol of the operator in the left hand side of Equation 
\eqref{eq:eq_torus_2} vanishes only on $D_{2b} + \tfrac{1}{4b} \la$. Furthermore we obtain the same expression for $Z_k(E_{2,b})$ on $D_0$ as the one obtained for general torus knots, which proves the second assertion. 
\end{proof}

Let us consider now the figure eight knot. Denote by $E_8$ its exterior.
\begin{theo} \label{sec:8_theo}
The microsupport of $Z_k(E_8)$ satisfies $$D_0 \subset \MS( Z_k(E_8) ) \subset D_0 \cup X_8$$
where $X_8$ is defined in (\ref{eq:X_8}). 
Conjecture \ref{conj:abelian} holds for the figure eight knot state.
\end{theo}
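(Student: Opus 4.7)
The approach parallels Theorem \ref{prop:torus_ab_preuve} for torus knots, using both the single $q$-difference relation of Proposition \ref{sec:recurrence-relations-single} and, for two exceptional points on $\R\la$, the $2\times 2$ system of Proposition \ref{sec:recurrence-relations-system}. Identifying $M$ and $L$ on $\sP$ with the Toeplitz operators $T^*_{\mu/2k}$ and $T^*_{-\la/2k}$ on $\Hilb_k$ via (\ref{representation}), and observing that $Z_k(D\cdot 1)=Z_k^0$ since $D=(q-q^{-1})^{-1}$ evaluates to $(2i\sin(\pi/k))^{-1}$ at $t=-e^{i\pi/2k}$, the identity $QJ^8=R$ translates into the Toeplitz operator equation $Q\,Z_k(E_8)=P\,Z_k^0$, where $P=M^5+M^{-5}+M^3+M^{-3}-(q^2+q^{-2})(M+M^{-1})$. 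The principal symbol of $Q$ factors as
\[
\sigma_Q=(\sM^2-\sM^{-2})\bigl[\sL+\sL^{-1}-\sM^4-\sM^{-4}+\sM^2+\sM^{-2}+2\bigr],
\]
whose zero locus coincides with $D_0\cup X_8$: the first factor vanishes on $D_0$, and substituting $\sM=e^{-2i\pi q}$, $\sL=e^{-2i\pi p}$ turns the bracket into the defining equation (\ref{eq:X_8}) of $X_8$. Since Theorem \ref{sec:ground_state} gives $\MS(P\,Z_k^0)\subset D_0$, a local Toeplitz parametrix for $Q$ outside $D_0\cup X_8$ yields $Z_k(E_8)=O(k^{-\infty})$ there, which establishes $\MS(Z_k(E_8))\subset D_0\cup X_8$.

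For Conjecture \ref{conj:abelian} at a regular point $q\la$ with $q\notin\tfrac14\Z$, $\sigma_Q$ is non-zero, so I invert $Q$ locally modulo $O(k^{-\infty})$ and apply the second part of Theorem \ref{sec:ground_state} to $P\,Z_k^0$. This reduces the problem to a symbol computation on $\R\la$, where $\sL=1$ and $\sM=\si^{-1}$. Setting $v=\si+\si^{-1}$, one finds $\sigma_P|_{\R\la}=v^3(\si-\si^{-1})^2$ and $\sigma_Q|_{\R\la}=(\si^{-2}-\si^2)\Delta_8(\si^2)(\si+\si^{-1})^2$, whence after cancellation
\[
\frac{\sigma_P}{\sigma_Q}\bigg|_{\R\la}=-\frac{\si-\si^{-1}}{\Delta_8(\si^2)};
\]
combined with the prefactor $e^{3i\pi/4}/\sqrt{2}$ of Theorem \ref{sec:ground_state}, this is precisely the Lagrangian state of Conjecture \ref{conj:abelian} with $m=7$. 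The reverse inclusion $D_0\subset\MS(Z_k(E_8))$ then follows, since $f_0=(\si-\si^{-1})/(\sqrt 2\,\Delta_8(\si^2))$ is not identically zero on $\R\la$ and the microsupport is $R$-invariant.

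It remains to handle the regular points $q=0$ and $q=\tfrac12$ on $\R\la$ where $\sigma_Q|_{\R\la}$ vanishes; the other two zeros of $\sigma_Q|_{\R\la}$, at $q=\tfrac14$ and $q=\tfrac34$, coincide with the unique non-regular point $\pi(\la/4)\in r(\mo^{\ir})$. At $q=0,\tfrac12$ I switch to the system of Proposition \ref{sec:recurrence-relations-system}, which becomes the matrix Toeplitz equation
\[
(L\cdot\id-A)\binom{Z_k(E_8)}{Z_k(I)}=\mathbf b\,Z_k^0,
\]
with $Z_k(I)$ constructed from the sequence $I_n$ of Lemma \ref{sec:exist-I}. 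A direct computation gives the determinant of the principal symbol restricted to $\R\la$ as $\Delta_8(\si^2)(\si+\si^{-1})^2$, which is non-zero at $q=0,\tfrac12$, so Cramer's rule supplies a local matrix parametrix whose first component produces the required asymptotic for $Z_k(E_8)$ at these points. The main obstacle is executing this last step rigorously: one must first establish admissibility of the auxiliary state $Z_k(I)$ (using Lemma \ref{sec:exist-I} together with a norm bound in the spirit of Section 5.5), then carefully track both the scalar prefactor and the $e^{im\pi/4}$ phase through the matrix inversion to confirm the same leading coefficient $-(\si-\si^{-1})/\Delta_8(\si^2)$ and the same integer $m$ at the exceptional points, so that Conjecture \ref{conj:abelian} holds uniformly on a neighborhood of every regular point of $\R\la$.
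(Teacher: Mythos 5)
Your algebraic translation $Q\,Z_k(E_8)=P\,Z_k^0$ and the symbol computations on $\R\la$ (including the value $-(\si-\si^{-1})/\Delta_8(\si^2)$ and the deduction of $D_0\subset\MS(Z_k(E_8))$ from the nonvanishing of the leading coefficient) are correct, but the geometric claim on which your upper bound rests is false: the zero locus of $\sigma_Q$ is not $D_0\cup X_8$. Since $\sM=e^{-2i\pi q}$ depends only on the coordinate $q$ along $\la$, the first factor $\sM^2-\sM^{-2}=-2i\sin(4\pi q)$ vanishes on the lines $\R\mu+\tfrac14\Z\la$, which are transverse to $D_0=\R\la+\Z\mu$, not equal to it; moreover the bracket vanishes on the whole curve $\cos(2\pi p)+1=\cos(8\pi q)-\cos(4\pi q)$, which contains the points of $\mu/2+\mu\Z+\tfrac12\la\Z$ that are explicitly removed in the definition (\ref{eq:X_8}) of $X_8$. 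Consequently your parametrix argument for $Q$ only yields $\MS(Z_k(E_8))\subset D_0\cup\bigl(\R\mu+\tfrac14\Z\la\bigr)\cup\{\cos(2\pi p)+1=\cos(8\pi q)-\cos(4\pi q)\}$, which is strictly weaker than the theorem: a point $p\mu+\tfrac14\la$ with $p\notin\Z$ lies outside $D_0\cup X_8$ but inside the zero locus of $\sigma_Q$, so your argument says nothing there. This is exactly why the paper remarks that the single relation of Proposition \ref{sec:recurrence-relations-single} is not sufficient and takes the $2\times 2$ system of Proposition \ref{sec:recurrence-relations-system} as the primary tool: its symbol determinant $ad-bc=2\sL\bigl(\cos(2\pi p)+1-\cos(8\pi q)+\cos(4\pi q)\bigr)$ carries the nowhere-vanishing factor $\sL$ in place of $\sM^2-\sM^{-2}$, so the matrix parametrix applies at every point off the curve, including all of the lines $q\in\tfrac14\Z$ and, on $\R\la$, the points $q=0,\tfrac12$ that you treat separately.

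Even granting your (acknowledged, unexecuted) system step near $q=0,\tfrac12$, two gaps remain. First, that step repairs nothing on the lines $q\in\tfrac14\Z$ away from $\R\la$; to obtain the stated inclusion you would have to run the matrix parametrix on a neighborhood of every point where the first factor, but not the bracket, vanishes — which is precisely the paper's proof, with the roles of the two recurrence relations interchanged relative to your plan. Second, the points of $\mu/2+\mu\Z+\tfrac12\la\Z$ lie in the zero set of the bracket, hence also of the system's determinant, so neither operator can be inverted there; the paper excludes them by a propagation-of-singularities argument applied to $QJ^8=R$ (the symbol of $Q$ vanishes there with nonvanishing differential, while the microsupport of the right-hand side avoids these points, so they cannot be isolated in the microsupport), and your proposal does not address them at all. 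Until these two issues are handled, the inclusion $\MS(Z_k(E_8))\subset D_0\cup X_8$ is not established, whereas the local analysis of Conjecture \ref{conj:abelian} at regular points $q\la$ with $q\notin\tfrac14\Z$ is sound as written.
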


We will prove in \cite{LJ2} that the microsupport is exactly $D_0 \cup X_8$. 

\begin{proof} 
The proof is essentially the same as for the torus knots except that we use the system of equation given in Proposition \ref{sec:recurrence-relations-system}. Actually the single equation $QJ=R$ of Proposition \ref{sec:recurrence-relations-single} is not sufficient to conclude. Recall that we introduced in Proposition \ref{sec:recurrence-relations-system} a sequence $I \in \sP$. Let $Y_k = Z_k (I)  \in \Hilb_k$. By Proposition \ref{sec:recurrence-relations-system}, we have the system
$$ \begin{cases}  A Z_k(E_8) + B Y_k =  S Z^0_k \\  C Z_k(E_8)  + D Y_k =  T Z^0_k
\end{cases} $$
where $A$, $B$, $C$, $D$, $S$ and $T$ are Toeplitz operators whose symbols are respectively given by 
\begin{gather*} 
 a = \sL - \sM^{-4} + \sM ^{-2} +1, \quad b = -c = -\sM^2 + \sM^{-2}    \\
d = \sL - \sM^4 + \sM^2 + 1, \quad  s = \sM - \sM^{-3}, \quad t = \sM^3 - \sM^{-1}  
\end{gather*}
A straightforward computation gives 
\begin{xalignat*}{2}
 ad -bc = &  \sL \bigl( \sL + \sL^{-1} +2 - ( \sM^4 + \sM^{-4} ) + \sM^2 + \sM^{-2} \bigr) \\
= & 2 \sL \bigl( \cos ( 2 \pi p)  + 1 - \cos ( 8 \pi q ) + \cos ( 4 \pi q) \bigr)
\end{xalignat*}
Let $x \in E$ not belonging to  $X_8$. Assume $x \notin  \mu/2 + \mu \Z + \la/2 \Z $. Then the determinant $ad -bc$ does not vanish at $x$. It follows from the symbolic calculus of Toeplitz operators that there exists Toeplitz operators $E$, $F$, $G$, $H$ such that
$$ \begin{pmatrix} E &F\\ G&H \end{pmatrix}  \begin{pmatrix} A&B\\ C&D\end{pmatrix} = \begin{pmatrix}  \id &0\\ 0 & \id \end{pmatrix}
$$
on a neighborhood of $x$. Hence
$$ Z_k(E_8)  = ( ES + FT) Z^0_k + O( k^{-\infty}) 
$$ 
on a neighborhood of $x$. From now on we argue as in the proof of Theorem \ref{prop:torus_ab_preuve}. If $x \notin D_0$, we conclude that $x$ does not belong to the microsupport of $Z_k(E_8)$. If $x \in D_0$, we conclude that $Z_k(E_8)$ is of the form (\ref{eq:toep_lag}) on a neighborhood of $x$ where $g_0$ is equal to the symbol of $ES + FT$ on $\R \la$. On a neighborhood of $x$, the symbols of $E$ and $F$ are respectively $ d ( ad -bc)^{-1}$ and $-b ( ad - bc )^{-1}$; so the symbol of $ES + FT$ is 
$$ \frac{ ds - bt}{ad - bc} = \frac{( \sM^{2} -\sM^{-2} )( \sM + \sL \sM^{-1})}{ ad - bc}   
$$ 
Using the previous formula for $ad -bc$, we obtain that the restriction of $g_0$  to $\R \la$ is given by
$$  - \frac{ (\si^2 - \si^{-2})( \si + \si^{-1})}{ 4 + \si^2 + \si^{-2} - ( \si^4 + \si^{-4})} = - \frac{ \si - \si^{-1} } { 3 - ( \si^2 + \si^{-2})}   
$$ 
Conjecture \ref{conj:abelian} follows. 

To end the proof, we have to show that the points of $\mu/2 + \mu \Z + \la/2 \Z $ do not belong to the microsupport. To do that, we will use some microlocal results recalled in \cite{LJ2}. Consider the equation $QJ=R$ of Proposition \ref{sec:recurrence-relations-single}. The symbol of $Q$ is 
$$ 2 ( \sM^2 - \sM^{-2} )  \bigl( \cos ( 2 \pi p)  + 1 - \cos ( 8 \pi q ) + \cos ( 4 \pi q) \bigr)$$
It vanishes on $\mu/2 + \mu \Z + \la/2 \Z $, but its differential does not. Furthermore the microsupport of $R$ does not intersect $\mu/2 + \mu \Z + \la/2 \Z $. So by singularity propagation, we deduce from $QJ=R$ that the points of  $\mu/2 + \mu \Z + \la/2 \Z $ can not be isolated in the microsupport of $R$. Hence by the first part of the proof, they do not belong to the microsupport. The result on the propagation of singularities is an easy consequence of Theorem 5.2 of \cite{LJ2}. 
\end{proof}

\subsection{Relation with the AJ-Conjecture} \label{sec:relation-with-aj}

AJ conjectures were proposed by Garoufalidis in \cite{gar}.  We will state two versions, that we will call geometric and polynomial. The geometric version, Conjecture \ref{conj:AJ_conjecture_geom},  is easy to compare with Conjecture \ref{sec:conj_microsupport} on the microsupport of the knot state. The polynomial version, Conjecture \ref{conj:AJ_conjecture_polynomial}, has been proved by  Garoufalidis in \cite{gar} for the trefoil and the figure eight knot using the computer. Hikami showed it in \cite{hikami} for the torus knots and Le proved it for all 2-bridge knots with irreducible $A$-polynomial, see \cite{le}. It gives an important information on the microsupport of the knot state which is in general not sufficient to deduce Conjecture \ref{sec:conj_microsupport}. 

Following \cite{ccgls}, the deformation variety of a knot $K$ in $S ^3$ is the subvariety of $(\C^*)^2$ defined as follows. For any compact manifold $M$, let $X(M)$ be the algebro-geometric quotient of the affine variety $$R(M) = \operatorname{Hom} ( \pi_1(M), \operatorname{Sl}_2 (\C))$$ under the conjugation action. If $\Si$ is the peripheral torus of the knot, we let $\Delta \subset R (\Sigma)$ be the variety of diagonal representations. $\Delta$ is isomorphic to $(\C^*)^2$, trough the map sending $(u,v)$ to $\rho \in \Delta$ such that $\rho ( \mu) = \operatorname{diag}(u,u^{-1})$ and  $\rho ( \la) = \operatorname{diag}(v,v^{-1})$.
The restriction $\bpi: \Delta \rightarrow X (\Sigma )$ of the canonical projection is a surjection which is generically two to one. The deformation variety of $K$ is 
$$ D_K =  \bpi^{-1} ( r ( X (E_K)))$$ 
where $r : X (E_K) \rightarrow X( \Sigma )$ is the restriction map.  
 
Let $J^K\in \sP$ be the sequence of the colored Jones polynomials of $K$. We define the ideal of recurrence relations for $J^K$ by $$\sA^K=\{P\in \sT, P J^K=0\}.$$
Let $\ep:\sT\to \C[M^{\pm 1},L^{\pm 1}]$ be the algebra homomorphism sending $q$ to $1$ and fixing $M$ and $L$. Garoufalidis defines in \cite{gar} the characteristic variety of $\sA^K$ in the following way:
\begin{equation} \label{eq:caracteristic_variete}
\operatorname{ch}(\sA^K)=\{(x,y)\in (\C^*)^2, P(x,y)=0\quad\forall P\in \ep(\sA^K)\}
\end{equation}
Given two subsets $A,B\in (\C^*)^2$, we say that they are $M$-essentially equal if there is a finite set $Y\in \C^*$ such that $A\cup \left(Y\times\C^*\right)=B\cup \left(Y\times\C^*\right)$. The following conjecture was proposed by Garoufalidis in \cite{gar}:
\begin{conj}[AJ-conjecture, geometric version] \label{conj:AJ_conjecture_geom}
The deformation variety $D_K$ and the characteristic variety $\operatorname{ch}(\sA^K)$ of $(\C^*)^2$ are $M$-essentially equal.
\end{conj}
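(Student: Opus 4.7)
The plan is to deduce both $M$-essential inclusions from the semi-classical framework of Sections~\ref{sec:asymptotic_curve_and_basis} and~\ref{sec:micro_abelian}, under the hypothesis that Conjecture~\ref{sec:conj_microsupport} holds in its strong form, i.e.\ $\pi(\MS(Z_k(E_K)))=r(\mo(E_K))$. The paper proves this strong form for torus knots and the figure eight; in general it is taken as a working assumption.

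\emph{Forward inclusion $D_K\subset\operatorname{ch}(\sA^K)$.} Fix $P\in\sA^K$. Since $PJ^K=0$, the intertwining relation (\ref{representation}) shows that the Toeplitz operator obtained by substituting $M\mapsto T^*_{\mu/2k}$, $L\mapsto T^*_{-\la/2k}$, $q\mapsto e^{-i\pi/k}$ annihilates $Z_k(E_K)$. By Theorem~\ref{sec:Toeplitz_operators} its principal symbol is the $R$-invariant function $\ep(P)(\sM,\sL)$ on $E$, where $\sM=e^{-2i\pi q}$ and $\sL=e^{-2i\pi p}$ are the symbols of $M$ and $L$. The microsupport-reduction principle (\ref{eq:top_micsup}) then forces $\ep(P)$ to vanish on $\MS(Z_k(E_K))$, and via (\ref{eq:coordonnees-mo-Sigma}) this translates to $\ep(P)\equiv 0$ on $\bpi^{-1}(r(\mo(E_K)))\cap\Delta_{\mathrm{real}}$. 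Since $\mo(E_K)$ is the real-analytic $\su$-locus of the complex character variety $X(E_K)$ and, away from a finite set of meridian eigenvalues, is Zariski-dense in its complexification, $\ep(P)$ extends by real-analytic continuation to a polynomial vanishing on all of $r(X(E_K))$, hence on $D_K=\bpi^{-1}(r(X(E_K)))$. The exceptional $M$-fibres are absorbed in the $M$-essential identification.

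\emph{Reverse inclusion $\operatorname{ch}(\sA^K)\subset D_K$.} I would first invoke the $q$-holonomicity of the colored Jones sequence (Garoufalidis--Le), which guarantees that $\operatorname{ch}(\sA^K)$ has complex dimension at most $1$. Combined with the forward inclusion and the fact that $D_K$ is a curve in $(\C^*)^2$, this shows that $\operatorname{ch}(\sA^K)$ is a union of $1$-dimensional components, one of which contains (and is generically equal to) $D_K$. Any spurious component must therefore be a curve. Those that happen to be fibres $\{M=m\}$ of the projection $(M,L)\mapsto M$ are precisely what the $M$-essential equivalence is designed to discard. To rule out the remaining ones, I would analyse the minimal-order recurrence generator $P_K$ of the localised ideal $\sA^K\otimes_{\C[q]}\C(q)$: factoring $\ep(P_K)=A_K(M,L)\cdot R(M,L)$, the forward step guarantees that $A_K$ vanishes on $D_K$ and is its defining polynomial up to a product of $M$-vertical factors; the task is to show that $R$ contains only $M$-vertical factors.

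\emph{Main obstacle.} The hard step is controlling those non-vertical spurious components of $\operatorname{ch}(\sA^K)$. A priori the classical limit of the minimal non-commutative recurrence $P_K$ may acquire extra factors from the $q\to1$ specialisation, for instance from leading-coefficient cancellations, and these extra factors need not correspond to any geometric representation. Removing them amounts to a faithfulness or flatness statement for the family of quantum torus algebras acting on $\sP$ at $q=e^{-i\pi/k}$, which I do not know in general; the known proofs (Hikami for torus knots, Le for 2-bridge knots with irreducible $A$-polynomial) bypass the issue by computing an explicit minimal generator and inspecting its leading term by hand. A conceptual proof, applicable uniformly, would require strengthening the semi-classical dictionary of Section~\ref{sec:basic-micr-prop} into a statement about the full Weyl symbol rather than only the principal symbol, so that not only zeros but also the order of vanishing of $\ep(P)$ along $D_K$ is controlled by the annihilator $\sA^K$.
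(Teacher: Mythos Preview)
The statement you are trying to prove is presented in the paper as an open conjecture; the paper offers no proof and does not claim one. Section~\ref{sec:relation-with-aj} only explains how Conjecture~\ref{conj:AJ_conjecture_geom} is analogous to the paper's microsupport conjecture (Conjecture~\ref{sec:conj_microsupport}), and records the inclusion~(\ref{eq:microsupport_caracteristique}) together with Le's divisibility result (that $\ep(\alpha)$ is divisible by $A_K$ for every $\alpha\in\sA^K$). So there is no ``paper's own proof'' to compare against; your proposal is an attempt at something the authors leave open.

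On the content of your plan, two substantive gaps should be flagged. First, your standing hypothesis is wrong as stated: the paper does \emph{not} prove the strong form $\pi(\MS(Z_k(E_K)))=r(\mo(E_K))$ for torus knots. Theorem~\ref{prop:torus_ab_preuve} gives only $D_0\subset\MS\subset D_0\cup D_{ab}\cup(D_{ab}+\tfrac{1}{2ab}\la)$, and for the figure eight the equality is deferred to~\cite{LJ2}. Second, the real-to-complex passage in your forward inclusion is not justified: you need that the $\su$-locus $\pi_\R^{-1}(r(\mo(E_K)))$ is Zariski-dense in $D_K$, but $D_K$ may well have irreducible components carrying no $\su$-representations at all, in which case vanishing on the real microsupport tells you nothing about $\ep(P)$ there. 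The paper itself stresses that the inclusion~(\ref{eq:reel_comp}) is strict in general. Note also that the direction $D_K\subset\operatorname{ch}(\sA^K)$ (up to $M$-factors, via the $A$-polynomial) is precisely Le's Proposition~4.1 cited after Conjecture~\ref{conj:AJ_conjecture_polynomial}; that argument is purely algebraic and does not pass through the semiclassical machinery.

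Your diagnosis of the reverse inclusion is accurate: controlling the extra factors of $\ep(P_K)$ is exactly the obstruction, and the paper makes the same point when it says that the known cases (torus knots, two-bridge knots with irreducible $A$-polynomial) are handled by explicit computation of a generator. Your suggestion of upgrading the symbol calculus to track orders of vanishing is reasonable as a heuristic, but would still only constrain the real locus, so the Zariski-density issue recurs.
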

 
Let us compare with Conjecture \ref{sec:conj_microsupport}. Recall that the moduli space $\mo ( \Sigma )$ is isomorphic with the quotient of $E$ by $R \rtimes \Z_2$. Let $\pi_\R$ be the projection from $E / R$ to $\mo ( \Sigma )$. The real analog of the deformation variety $D_K$  is $\pi_\R^{-1} ( r ( \mo (E_K))$. Furthermore, if we embed $E/R$ into $\Delta$ with the map sending $[p \mu + q \la]$ into $( e^{2i\pi p} ,e^{2i\pi q})$, we have that
\begin{gather} \label{eq:reel_comp}
  \pi_\R^{-1} ( r ( \mo (E_K)) \subset (E /R) \cap D_K . 
\end{gather}    
In general, the inverse inclusion is false, as $(E /R) \cap D_K $ is Zariski closed and $\pi_\R^{-1} ( r ( \mo (E_K)) $  is not necessarily closed, cf. for instance the trefoil knot.

Similarly the microsupport may be viewed as a real analog of the characteristic variety of $\operatorname{ch}(\sA^K)$ because of the following characterization. Consider the set ${\mathcal{I}}_K$ of Toeplitz operators $(T_k)$ of ${\mathcal{H}}_k$ such that 
$$ T_k Z_k (E_K) =  O(k^{-\infty})  $$
The principal symbol of a Toeplitz operator $(T_k)$ is an $R$-invariant function of $E$, so it descends to a function on $E/R$ . We denote it by $\si (T_k)$. Recall that the microsupport of $Z_k (E_K)$ is $R$-invariant, so it may be viewed as a subset of $E/R$.  
\begin{prop} \label{prop:micro_supp_Toeplitz}
The microsupport of $Z_k (E_K)$ is given by 
\begin{gather} \label{eq:carac_microsupport}
 \MS (Z_k ( E_K)) = \{ x \in E/R ; \; \si ( T_k) (x) = 0, \forall \; (T_k) \in {\mathcal{I}}_K \} .
\end{gather}
\end{prop}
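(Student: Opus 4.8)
The plan is to prove the two inclusions in \eqref{eq:carac_microsupport} separately, using the basic microlocal facts recalled in Section \ref{sec:basic-micr-prop} together with the characterization of microsupport via Toeplitz testing.

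\textbf{The inclusion $\subseteq$.} First I would show that if $x \in E/R$ satisfies $\si(T_k)(x) = 0$ for all $(T_k) \in {\mathcal{I}}_K$, then $x \in \MS(Z_k(E_K))$. Equivalently, by contraposition: if $x \notin \MS(Z_k(E_K))$, I must produce a Toeplitz operator $(T_k) \in {\mathcal{I}}_K$ whose principal symbol does not vanish at $x$. Since $x$ lies outside the microsupport, $(Z_k(E_K))$ is a $O(k^{-\infty})$ on some neighborhood $V$ of $x$; shrinking, pick a cutoff $\chi \in \Ci_R(E)$ supported (mod $R$) in $V$ with $\chi(x) = 1$. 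Then $(T_k) := (\Pi_k M(\chi))$ is a Toeplitz operator with principal symbol $\chi$, hence $\si(T_k)(x) = 1 \neq 0$; and because the Schwartz kernel of $\Pi_k$ is itself $O(k^{-\infty})$ away from the diagonal (a standard off-diagonal estimate for the Bergman-type projector, used implicitly throughout Section \ref{sec:tore_Toeplitz_operators}), the product $\Pi_k M(\chi) Z_k(E_K)$ is $O(k^{-\infty})$ everywhere: on $V$ because $Z_k(E_K)$ itself is small there, off $V$ because the off-diagonal kernel decay beats the admissible polynomial growth of $Z_k(E_K)$. Thus $(T_k) \in {\mathcal{I}}_K$, which is the required contradiction.

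\textbf{The inclusion $\supseteq$.} Conversely, suppose $x \in \MS(Z_k(E_K))$; I must show $\si(T_k)(x) = 0$ for every $(T_k) \in {\mathcal{I}}_K$. Suppose not: there is $(T_k) \in {\mathcal{I}}_K$ with $f := \si(T_k)$, $f(x) \neq 0$. By continuity $f$ is nonvanishing on a neighborhood $V$ of $x$, so by the symbolic calculus of Toeplitz operators one can construct a parametrix: a Toeplitz operator $(S_k)$ with $S_k T_k \equiv \id$ on $V$ in the sense of Section \ref{sec:basic-micr-prop} (equate the total symbols, inverting the formal symbol series of $(T_k)$ on $V$). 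Then on $V$ we have $Z_k(E_K) = S_k T_k Z_k(E_K) + O(k^{-\infty}) = O(k^{-\infty})$, using $(T_k) \in {\mathcal{I}}_K$ and \eqref{eq:top_micsup} (or directly the "$\equiv$ on an open set" property). Hence $(Z_k(E_K))$ is a $O(k^{-\infty})$ on $V$, so $x \notin \MS(Z_k(E_K))$, a contradiction. This uses admissibility of $(Z_k(E_K))$, which was established in Section \ref{sec:knot-state} via the lemma on closed $3$-manifolds.

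\textbf{Main obstacle.} The delicate point is really the first inclusion: one needs the off-diagonal decay of the kernel of $\Pi_k$ uniformly on the compact torus $E/R$ to conclude that $\Pi_k M(\chi) Z_k(E_K)$ is globally $O(k^{-\infty})$ even where $Z_k(E_K)$ is only polynomially bounded, not just on a neighborhood of $x$. This is a standard but essential ingredient of the Toeplitz/Bergman formalism and is exactly what makes the microsupport "local" despite $\Pi_k$ being nonlocal; on a flat torus it can also be checked by hand from the explicit theta-series description of $\Pi_k$ underlying Theorem \ref{sec:rep_Heisenberg}. Everything else is bookkeeping with the symbolic calculus and the definitions, and I would present the argument exactly in the order above: build the localizing Toeplitz operator for $\subseteq$, then build the parametrix for $\supseteq$.
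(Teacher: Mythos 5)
Your proposal is correct and follows essentially the same route as the paper: for one inclusion you localize with a cutoff $\chi$ supported where the state is already $O(k^{-\infty})$ and observe that $(\Pi_k M(\chi))$ lies in ${\mathcal{I}}_K$ with symbol $\chi(x)\neq 0$, and for the other you build a microlocal inverse (parametrix) of a $(T_k)\in{\mathcal{I}}_K$ whose symbol is nonzero at $x$ and conclude $Z_k(E_K)=S_kT_kZ_k(E_K)+O(k^{-\infty})=O(k^{-\infty})$ near $x$. The only divergence is your ``main obstacle'': the off-diagonal decay of the kernel of $\Pi_k$ is not actually needed, since $\chi\,Z_k(E_K)$ is already $O(k^{-\infty})$ everywhere (it vanishes off the support of $\chi$ and is small on it), so the fact that $\Pi_k$ is an orthogonal projector, hence of norm at most $1$, already gives $\Pi_k M(\chi)Z_k(E_K)=O(k^{-\infty})$, which is how the paper argues implicitly.
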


\begin{proof} 
This follows from the properties of Toeplitz operators recalled in the beginning of Section \ref{sec:basic-micr-prop}. 
Assume that $x \in E$ does not belong to the microsupport of $Z_k ( E_K)$ so that on a neighborhood $V$ of $x$, $Z_k (E_K)$ is a $O(k^{-\infty})$. Then for any function $f \in \Ci _R (E)$ whose support is contained in $ V + R$, the Toeplitz operator $(\Pi_k M_f)$ belongs to the ideal ${\mathcal{I}}_k$ and its principal symbol is $f$.

Conversely assume that there exists a Toeplitz operator $(T_k) \in {\mathcal{I}}_K$ whose symbol does not vanish at $x$. Multiplying $(T_k)$ by a microlocal inverse, we may assume that $T_k \equiv \id_{{\mathcal{H}}_k}$ on a neighborhood of $x$. Consequently, we have on a neighborhood of $x$ that 
 $$Z_k(E_k) = T_k Z_k (E_K) + O(k^{-\infty}) = O(k^{-\infty})$$ 
because $(T_k)$ belongs to ${\mathcal{I}}_K$.
\end{proof}

Not only is the characterization of the microsupport (\ref{eq:carac_microsupport}) similar to the definition of the characteristic variety (\ref{eq:caracteristic_variete}), but we also have the following relation 
\begin{gather} \label{eq:microsupport_caracteristique}
 \MS ( Z_k ( E_K)) \subset (E/ R) \cap \operatorname{ch}(\sA^K)
\end{gather}
similar to (\ref{eq:reel_comp}). This follows from the fact each recurrence relation $P$ in ${\mathcal{A}}^K$  gives a Toeplitz operator $(T_k)$ in ${\mathcal{I}}_K$ by Theorem \ref{sec:Toeplitz_operators}. Furthermore the restriction of $\ep (P)$ to $E/R$ is the principal symbol of $(T_k)$.  As for (\ref{eq:reel_comp}), the inclusion of (\ref{eq:microsupport_caracteristique}) is strict. The reason is the same, the microsupport is not necessarily Zariski closed.  

So Conjecture \ref{sec:conj_microsupport} is a real analog of the AJ-conjecture, where the group $\operatorname{Sl}_2(\C)$ is replaced by $\su$. An important difference is that the AJ conjecture deals with algebraic set whereas our conjecture compare two closed sets for the usual topology.  Another difference is that the equality in the AJ-conjecture is an equality up to some lines. The reason is that in the polynomial version (the only version which is proved for some knots) we compare two polynomials up to some $M$ factor.  Actually we do not know any counter example of the equality between the deformation variety and the characteristic variety. 

To end this section, let us explain what we can deduce on the microsupport of the knot state from the polynomial version of the AJ conjecture. Let $A_K \in\C[M,L]$ be the $A$-polynomial introduced in \cite{ccgls}. For torus knots with parameters $a,b$ the $A$ polynomial is equal to $(L-1)(LM^{ab}+1)$
if $a=2$ and otherwise to 
$ (L-1)(L^2 M^{2ab}-1)$. 
The $A$ polynomial of the figure eight knot is $$(L-1)(L^2M^4+M^4+L(-M^8+M^6+2M^4+M^2-1)).$$
\begin{conj}[AJ-conjecture, polynomial version] \label{conj:AJ_conjecture_polynomial}
There exists a \\q-difference relation $\al \in {\mathcal{A}}^K$ such that
\begin{gather} \label{eq:AJ_conjecture_polynomial_version} 
  \ep (\al) = F A_K 
\end{gather}
for some fraction $F\in \C (M)$. 
\end{conj}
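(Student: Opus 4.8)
The plan is to exhibit $\al\in\sA^K$ with $\ep(\al)=F\,A_K$ for some $F\in\C(M)$ by routing through the Kauffman bracket skein module of the knot exterior and its $t\to-1$ limit, in the spirit of Frohman--Gelca--Lofaro and Garoufalidis. The target is equivalent to asking that $A_K\mid\ep(\al)$ in $\C(M)[L^{\pm1}]$ together with $\deg_L\ep(\al)=\deg_LA_K$. First I would record that $\sA^K\neq0$: by \cite{gl} the colored Jones sequence $J^K$ is $q$-holonomic, so the localised recurrence ideal $\widehat{\sA}^K:=\C(M)\langle L^{\pm1}\rangle\,\sA^K$ is a nonzero principal left ideal of the Euclidean ring $\C(M)\langle L^{\pm1}\rangle$; I would fix an $L$-monic generator $\be_0$ and let $\al$ be a $\C[q^{\pm1}]$-multiple of it lying in $\sA^K\subset\sT$, so that $\ep(\al)$ and $\ep(\be_0)$ agree up to a factor in $\C(M)$. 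Everything then reduces to controlling $\ep(\be_0)$.

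The second step is to realise $\widehat{\sA}^K$ geometrically. The module $S(E_K,t)$ carries an action of the peripheral skein algebra $S(\Si\times[0,1],t)$, which by the Frohman--Gelca presentation is the $\Z_2$-invariant part of the quantum torus $\C_t\langle M^{\pm1},L^{\pm1}\rangle$ (with $q=t^2$), the meridian and longitude curves acting respectively as $M+M^{-1}$ and $L+L^{-1}$. The colored Jones function is recovered by pairing the empty skein $\emptyset\in S(E_K,t)$ against the $\ell$-coloured core of $N_K$, and this pairing intertwines the quantum-torus action on the Jones side --- precisely the operators $M,L$ of $(\ref{representation})$ --- with the skein action; hence the peripheral ideal $\mathfrak p_K$ annihilating $\emptyset$ maps into $\sA^K$, up to the well-known change of normalisation $(\ref{eq:normalization})$ relating $J^K$ and $\widetilde J^K$, which twists a recurrence only by a left factor supported on the abelian line $L=1$. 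Now I would specialise $t\to-1$, i.e.\ $q\to1$: by the Bullock and Przytycki--Sikora identification $S(E_K,-1)_{\mathrm{red}}\cong\C[X(E_K)]$, with $\emptyset\mapsto1$ and the module structure becoming restriction of functions along $r:X(E_K)\to X(\Si)$. So the image of $\mathfrak p_K$ at $q=1$ sits inside the ideal of $\C[M^{\pm1},L^{\pm1}]$ of functions vanishing on $\bpi^{-1}(\overline{r(X(E_K))})=\overline{D_K}$, and this map is exactly $\ep$ restricted to the skein subalgebra. Therefore $\ep(\be_0)$ vanishes on $\overline{D_K}$; since $V(A_K)\subseteq\overline{D_K}$, the squarefree part of $A_K$ divides $\ep(\be_0)$, hence divides $\ep(\al)$ --- provided $\ep(\be_0)\neq0$.

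The third step is to upgrade divisibility by the squarefree part to the exact statement, i.e.\ to match $L$-degrees and recover the $(L-1)$ factor coming from abelian representations. For the knots relevant to this paper I would do this by direct computation: take the recurrences of Section \ref{sec:recurrence-relations} --- Propositions \ref{sec:recurrence-relations-torique} and \ref{sec:recurrence-relations-torique2} for torus knots, \ref{sec:recurrence-relations-single} and \ref{sec:recurrence-relations-system} for the figure eight --- homogenise each by the minimal operator annihilating its inhomogeneous term $R$ (a $\C(M)$-combination of pure exponentials $q^{jn}$, hence killed by a product of first-order factors $L-q^j$), cancel common left factors, set $q=1$, and compare with the $A$-polynomials displayed above; the $L$-degrees match after cancellation. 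This is precisely the verification of Garoufalidis \cite{gar} for the trefoil and figure eight, of Hikami \cite{hikami} for torus knots, and of Le \cite{le} for $2$-bridge knots with irreducible $A$-polynomial.

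The main obstacle is the third step in general. It amounts to showing that the localised peripheral skein ideal already equals $\widehat{\sA}^K$ --- so that no shorter recurrence shrinks the annihilator below $\mathfrak p_K$ --- and that $\ep(\be_0)$ equals $A_K$ up to a unit of $\C(M)$ rather than a proper divisor; in particular that $\ep(\be_0)\neq0$, i.e.\ the minimal recurrence does not collapse at $q=1$. These are the substantive content of the AJ conjecture and are open for general $K$. The remaining technical points --- nilpotents in $S(E_K,-1)$, only the reduced ring being identified with $\C[X(E_K)]$, and a discrepancy tolerated over finitely many values of $M$ --- are exactly why the statement permits a factor $F\in\C(M)$ rather than a scalar, in the spirit of the $M$-essentially equal relation of Conjecture \ref{conj:AJ_conjecture_geom}.
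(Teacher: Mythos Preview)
The statement you are attempting to prove is labelled \emph{Conjecture} in the paper, and the paper does not prove it. It is stated as an open problem attributed to Garoufalidis \cite{gar}, with the remark that it has been verified for the trefoil and figure eight \cite{gar}, for torus knots \cite{hikami}, and for $2$-bridge knots with irreducible $A$-polynomial \cite{le}. There is therefore no proof in the paper to compare your proposal against.

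Your proposal is not a proof of the conjecture either, and you say so yourself: the third step --- matching $L$-degrees and ensuring $\ep(\be_0)\neq0$ --- is carried out only by direct computation for the specific families above, and you explicitly note that in general ``these are the substantive content of the AJ conjecture and are open for general $K$.'' So what you have written is a correct summary of the strategy behind the known partial results (Frohman--Gelca--Lofaro peripheral ideal, Bullock/Przytycki--Sikora $t\to-1$ limit, then case-by-case degree check), together with an accurate identification of where the argument stops. That is a reasonable discussion of the state of the art, but it should not be presented as a proof proposal for the conjecture itself; the gap you name in your final paragraph is precisely the conjecture.
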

Moreover, it is shown in \cite{le} (Proposition 4.1) that for any $\alpha\in\mathcal{A}^K$, $\ep(\alpha)$ is divisible by $A_K$ in $\C(M)[L^{\pm 1}]$. Then if the $L$-degrees of $\alpha$ and $A_K$ coincide, the recurrence polynomial $\alpha$ satisfying Conjecture \ref{conj:AJ_conjecture_polynomial} is a generator of the recurrence ideal in a weak sense, see \cite{le}.

Using as above that $q$-difference relations gives Toeplitz operators in the ideal ${\mathcal{I}}_K$,  Equation (\ref{eq:AJ_conjecture_polynomial_version}) implies that 
\begin{gather} \label{eq:micro_AJ_conjecture}
 \MS ( Z_k (E_K)) \subset \bigl\{ A_K( e^{ -2 i \pi q} , e^{ -2 i \pi p}) = 0 \bigr\} \cup \bigcup_{i=1,\ldots , N} \{ q = q_i\} 
\end{gather}
where the zeros and poles of $F$ in the unit circle are $e^{2i\pi q_1}, \ldots, e^{2 i \pi q_N}$. One can check that for the $q$-difference relations proved in \cite{gar} and \cite{hikami} for the figure eight knot and the torus knots, the inclusion (\ref{eq:micro_AJ_conjecture}) is weaker than Conjecture \ref{sec:conj_microsupport}.

\section{Gluing properties}

\subsection{Pairing Formula} 

Recall that the half-form line $(\delta, \varphi) $ consists in a complex line $\delta$ together with an isomorphism $\varphi : \delta^2 \rightarrow K_j$. 
For any two Lagrangian subspaces $\nu_1$, $\nu_2$ of $E$ such that $E = \nu_1 \oplus \nu_2$, there exists a unique sesquilinear pairing 
$$ \delta \times \delta \rightarrow \C , \qquad (x_1, x_2) \rightarrow \langle x_1 , x_2 \rangle_{\nu_1, \nu_2} 
$$
satisfying the following two properties: 
\begin{itemize}
\item[-] $ \bigl( \langle u , v \rangle_{\nu_1, \nu_2} \bigr)^2 =   i \frac{ \pi_1 ^* \varphi( u^{ 2}) \wedge  \overline{ \pi_2 ^* \varphi(v^{2})} }{\om}$, for all $u,v \in \delta $,
where  $\pi_1$ and $\pi_2$ are the projections of $E$ onto  $\nu_1$ with kernel $\nu_2$ and onto  $\nu_2$ with kernel $\nu_1$ respectively.
\item[-]  $\langle \cdot , \cdot \rangle_{\nu_1, \nu_2}$ depends continuously on $\nu_1$, $\nu_2$ and for any lagrangian $\nu$, 
$$\langle u, u \rangle_{\nu, j \nu} \geqslant 0, \qquad \forall u \in \delta $$ 
\end{itemize}
The reader is referred to \cite{l1}, Section 6, for more details. 

Consider now two one-dimensional submanifolds $\Gamma^ 1$ and $\Gamma^ 2$ of $E$ intersecting transversally at $x$. 
For $i=1,2$, let $F_i$ be a holomorphic section of $L$ such that its restriction to $L$ is flat with a unitary pointwise norm and $f_i$ be a section of $\delta \rightarrow E$. 

\begin{prop}[\cite{l1}] \label{sec:pairing-formula}
There exists a neighborhood $U$ of $x$ and a sequence $(a_\ell)$ of complex number such that for any $N$, 
\begin{xalignat*}{2}
\Bigl( \frac{k}{2 \pi} \Bigr)^{1/2} \int_U F^k_1 (y)\overline{F}^k_2 (y) & (f_1 (y), f_2 (y))_\delta \; |\om | (y) = \\ &  \Big(\frac{2\pi}{k}\Big)^{1/2}  F^k_1 (x)\overline{F}^k_2 (x)\sum_{\ell = 0 } ^N a_\ell k^{-\ell} + O(k^{-N-3/2} ).
\end{xalignat*}

Furthermore 
$ a_0 = \langle  f_1 (x), f_2 ( x) \rangle_{T_x \Ga^1, T_x \Ga^2 } . $
\end{prop}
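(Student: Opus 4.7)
The plan is to apply the complex-valued Laplace (stationary phase) method, exploiting that $|F_1^k(y)\overline{F_2^k(y)}|=|F_1(y)|^k|F_2(y)|^k$ concentrates near $x$ with Gaussian profile of width $O(k^{-1/2})$.

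The first step is to describe the local shape of $|F_i|^2$ near $\Gamma^i$. Writing $F_i = a_i s$ in a local holomorphic trivialization $s$ of $L$ compatible with the Hermitian structure, the fact that $F_i$ is holomorphic and restricts to a flat unit section on $\Gamma^i$ translates, via the Cauchy--Riemann equation, into $|F_i(y)|^2 = \exp(-\phi_i(y))$ with $\phi_i$ real analytic, vanishing on $\Gamma^i$, and whose Hessian transverse to $T_y \Gamma^i$ is the positive-definite quadratic form $u_\perp \mapsto \pi\,\omega(u_\perp, j u_\perp)$. Consequently, in a neighborhood of $x$ one can write
\[
F_1^k(y)\overline{F_2^k(y)} = F_1^k(x)\overline{F_2^k(x)}\exp\bigl(k\Psi(y)\bigr)\,H(y),
\]
where $\Psi$ is a smooth complex-valued phase with $\Psi(x)=0$, $d\Psi(x)=0$, $\operatorname{Re}\Psi \leqslant 0$ locally, and the real Hessian of $\operatorname{Re}\Psi$ at $x$ is negative definite precisely because $T_x\Gamma^1$ and $T_x\Gamma^2$ span $E$. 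This transversality is exactly what makes the ensuing Gaussian integral non-degenerate.

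The second step is to apply Laplace's method. Rescale $y = x + z/\sqrt{k}$, Taylor-expand $\Psi$ to second order and $H(y)(f_1(y), f_2(y))_\delta$ in powers of $k^{-1/2}$, and integrate termwise. The rescaling contributes a Jacobian $k^{-1}$ to the measure $|\omega|$, while the leading $2$-dimensional Gaussian contributes a factor $2\pi/\sqrt{\det(-\operatorname{Re}\Psi''(x))}$; together with the prefactor $(k/2\pi)^{1/2}$ these rearrange exactly to the $(2\pi/k)^{1/2}$ appearing in the statement. One obtains an asymptotic expansion $\sum a_\ell k^{-\ell}$, each $a_\ell$ being a finite Gaussian integral of a polynomial in $z$, with $a_0$ proportional to $(f_1(x), f_2(x))_\delta$ by a geometric factor depending only on $\omega$, $j$, and the two tangent lines $T_x\Gamma^1$ and $T_x\Gamma^2$.

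The main obstacle is the third step: identifying this geometric factor so that $a_0 = \langle f_1(x), f_2(x)\rangle_{T_x\Gamma^1, T_x\Gamma^2}$. One cannot read it off from stationary phase directly; instead I would verify the matching by choosing $j$-adapted bases of the two Lagrangians and computing both sides explicitly in coordinates. The defining relation $\langle u, v\rangle^2 = i\,\pi_1^*\varphi(u^2) \wedge \overline{\pi_2^*\varphi(v^2)}/\omega$ is quadratic in $(u,v)$, so one must carefully track the square root and overall phase produced by the Gaussian integration; the continuity in $(\nu_1,\nu_2)$ and the positivity in the case $\nu_2 = j\nu_1$---which the Gaussian integral of a real negative-definite quadratic form manifestly satisfies---together pin down the sign ambiguity and establish the match.
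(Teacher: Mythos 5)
Your proposal is correct and is essentially the paper's own route: the paper gives no independent argument, citing \cite{l1} and stating only that "the proof is an application of stationary phase lemma," i.e.\ exactly the complex-phase Laplace expansion you describe, with the transversality of $T_x\Ga^1$ and $T_x\Ga^2$ providing the nondegenerate Gaussian and the half-form pairing emerging from the Hessian factor. One small caveat: the leading Gaussian contributes $2\pi/\sqrt{\det(-\Psi''(x))}$ with the \emph{full complex} Hessian (which also carries a phase), not $\det(-\operatorname{Re}\Psi''(x))$ as written in your second step; this is precisely the square-root and phase bookkeeping that your third step, using the two defining properties of $\langle\cdot,\cdot\rangle_{\nu_1,\nu_2}$, is designed to carry out, so the identification of $a_0$ goes through as you outline.
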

The proof is an application of stationary phase lemma. In the sequel we need also to consider the case where $f_1(x) = f_2( x) =0$. Then $a_0$ vanishes and starting from the proof of the above proposition in \cite{l1}, it is easy to compute $a_1$. Write $f_i = g_i s_i$ with $g_i$ a function vanishing at $x$ and $s_i$ a section of $\delta \rightarrow E$. Let $H$ be the bilinear form of $E$   
$$ H(x,y) = \om ( \bar{q}_2 x - q_1 x , y )
$$
where $q_1$ and $q_2$ are the projections onto $E^{0,1} = \{ x + i j x/ x \in E \}$ with kernel $T_x \Ga^1$ and $T_x\Ga^2$ respectively.  $H$ is symmetric non degenerate. Let $G: E \times E \rightarrow \C$ be the Hessian of the product $g_1 g_2$ at $x$. Then 
\begin{gather} \label{eq:deuxieme_coef}
 a_1 =   \frac{i}{2} \Bigl( \sum\limits_{i,j = 1, 2} H^{ij}G_{ij} \Bigr)  \langle  s_1 (x), s_2 ( x) \rangle_{T_x \Ga^1, T_x \Ga^2 }
\end{gather}
where $H^{ij}$ is the inverse of the matrix of $H$ in some basis of $E$ and $G_{ij}$ is the matrix of $G$ in the same basis.  

\subsection{Lens spaces} 
Let us deduce the asymptotic of the WRT invariants of the lens spaces.  Let $p$ and $q$ be two mutually prime integers such that $1< q< p $. Let $\varphi$ be any preserving orientation homeomorphism of $\T ^2 = S^ 1 \times S^ 1$ sending $\{ 1 \} \times S^1$ onto $\{ (z^p, z^q ) / \; z \in S^1\}$ .     The lens space $L(p,q)$ is obtained by gluing two copies $N_1$, $N_2$ of the solid torus $D^2 \times S^1$ along their boundaries $\T^2$:
 $$L(p,q) = N_1 \cup_{ x_1 \sim \varphi (x_2)}  (-N_2) .$$

\begin{theo} \label{sec:Jeffrey_formula}
For any mutually prime integers $p,q$ such that $1< q <p$, there exists sequences $(a_{\ell,n})_n; \ell = 0,1 , \ldots , p-1$ such that for any $N$, 
$$ Z_k ( L(p,q) ) =  e^{i m \pi/4} \sum_{\ell = 0 }^{p-1} k^{m( \ell)} e^{ 2 i \pi \frac{q \ell^2 }{p} k} \sum_{n=0}^{N} a_{\ell, n} k^{-n} + O(k^{-N-1}) $$
where $m$ is an integer and if $\ell =0$ or $p/2$, 
$$ m ( \ell ) = -3/2 , \qquad a_{0,\ell} =  -i \frac{ \sqrt 2 \pi}{p^{3/2}}$$
and otherwise
$$  m ( \ell ) = -1/2, \qquad a_{0, \ell} = \sqrt{\frac{2}{p}}\sin \Bigr( \frac{2 \pi q \ell }{p} \Bigr) \sin \Bigl( \frac{ 2 \pi \ell }{ p} \Bigr).$$
\end{theo}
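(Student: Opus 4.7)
The plan is to view $Z_k(L(p,q))$ as a Hermitian pairing of two solid torus states and apply the pairing formula together with Theorem \ref{theo:solid-torus-state}. Using the TQFT gluing axioms with the Maslov correction (\ref{eq:comp_maslov}), we have
$$ Z_k(L(p,q)) = \tau_k^m \bigl\langle Z_k(N_1), Z_k(\varphi) Z_k(N_2) \bigr\rangle $$
for some integer $m$ independent of $k$. Transporting everything to $\Hilb_k^{\alt}$ by the isomorphism of Section \ref{sec:equiv}, the endomorphism $Z_k(\varphi)$ is a unitary relating two Lagrangian polarizations; equivalently, after fixing one polarization we are pairing two Lagrangian states concentrated on two transverse curves. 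First I would use Theorem \ref{theo:solid-torus-state} to replace each $Z_k(N_i)$ on a neighborhood of its characteristic Lagrangian $F_i$ by $e^{im_i\pi/4}\theta_{k,i}(k/2\pi)^{1/4}\, t_i^k\otimes \si_i$, where $\si_i$ has the form $\sqrt{2}\sin(2\pi s)\Om_{\ga_i}$ along $F_i$; outside such a neighborhood the state is $O(e^{-k/C})$, so the scalar product reduces up to a negligible error to an integral over a small tubular neighborhood of $F_1 \cup (F_1 \cap F_2)$.

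Next I would apply Proposition \ref{sec:pairing-formula} point by point at the intersection $F_1 \cap F_2 \bmod R$. A direct calculation, using that $\varphi$ sends the meridian direction of $N_2$ to a curve winding $p$ times in the meridional direction of $N_1$, shows that $F_1$ and $F_2$ intersect transversally at exactly $p$ points $x_0, \ldots, x_{p-1}$ of $E/R$, with $x_\ell$ at position $\ell/p$ along a generator of $F_1 \cap R$. Evaluating the two flat unitary sections $t_1$ and $t_2$ at $x_\ell$ using the explicit formula (\ref{eq:def_t}) and the parallel transport of the connection $d + \alpha/i$ along the intersection points yields
$$ t_1^k(x_\ell)\overline{t_2^k}(x_\ell) = e^{2 i \pi q \ell^2 k/p} \cdot (\text{global phase depending only on }p,q) $$
which, combined with the prefactors $(k/2\pi)^{1/4}\cdot (k/2\pi)^{1/4}\cdot (2\pi/k)^{1/2} = 1$ coming from the solid torus normalization and the leading order $(2\pi/k)^{1/2}$ in Proposition \ref{sec:pairing-formula}, produces the expected oscillatory sum in the statement.

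For a generic $\ell$ the $\si_i(x_\ell)$ are nonzero and the $a_0$-term of the pairing formula gives
$$ a_{0,\ell} = \bigl\langle \si_1(x_\ell), \si_2(x_\ell)\bigr\rangle_{T_{x_\ell}F_1, T_{x_\ell} F_2} = \sqrt{\tfrac{2}{p}}\,\sin(2\pi\ell/p) \sin(2\pi q\ell/p), $$
where the $\sqrt{2/p}$ comes from the change of basis formula for $\Om_{\ga}$ applied to the two lattice generators of $F_1\cap R$ and $F_2\cap R$ (whose symplectic pairing equals $4\pi p$), and the two sines come directly from the $\sqrt{2}\sin(2\pi s)$ factor in $\si_i$ evaluated at $s = \ell/p$ (resp.\ $s = q\ell/p$, after reading the position of $x_\ell$ along the second Lagrangian). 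The main obstacle, and where one must actually work, is at the exceptional values $\ell = 0$ and $\ell = p/2$ (the latter when $p$ is even, using $\gcd(p,q)=1$ so that $q$ is odd): there both $\sin$ factors vanish, so $a_0 = 0$ and one must instead invoke formula (\ref{eq:deuxieme_coef}) for $a_1$. Writing $g_i(x) = \sqrt{2}\sin(2\pi s_i(x))$ for affine coordinates $s_i$ along $F_i$, the Hessian of $g_1 g_2$ at the critical point is a constant multiple of $(2\pi)^2$ and the contraction with the inverse of the form $H$ (which is computable from $p,q$ and the intersection numbers) yields the stated value $-i\sqrt{2}\pi/p^{3/2}$ for the leading coefficient and promotes its order to $k^{-3/2}$. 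Finally, absorbing all remaining $\tau_k^m$, Maslov and sign ambiguities into a single integer $m$ gives the announced expansion to any order $N$.
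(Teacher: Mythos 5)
Your proposal is correct and follows essentially the same route as the paper: write $Z_k(L(p,q))$ as the pairing of the two solid torus states, use Theorem \ref{theo:solid-torus-state} to describe each as a Lagrangian state on $\ga_1\R$ and $\ga_2\R = (p\mu+q\la)\R$, apply Proposition \ref{sec:pairing-formula} at the $p$ transverse intersection points $x_\ell = \ell(\mu + \tfrac{q}{p}\la)$ to get the oscillatory sum with $a_{0,\ell}$ given by the half-form pairing times the two sine factors, and treat the degenerate points $\ell = 0,\, p/2$ with the second-order coefficient formula (\ref{eq:deuxieme_coef}). The only difference is that the paper carries out the $G$ and $H$ computation at the degenerate points explicitly (in the basis $\mu,\la$ with $j\mu=\la$), where you leave it as a sketch.
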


In particular, we recover Jeffrey's formula {\cite{je}}
$$ Z_k ( L(p,q) ) \sim i \sqrt{\frac{2}{p}} k^{-\frac{1}{2} } \sum_{\ell =1} ^ {p-1} e^{2 i \pi \frac{q \ell^2 }{p} k} \sin \Bigr( \frac{2 \pi q \ell }{p} \Bigr) \sin \Bigl( \frac{ 2 \pi \ell }{ p} \Bigr).$$ 
the $i$ factor corresponding to a canonical pairing. 

\begin{proof} 
It is an application of Proposition \ref{sec:pairing-formula}. We have
$$ Z_k ( L(p,q)) = \langle Z_k ( N_1) , Z_k ( N_2) \rangle _{V_{k} ( \T^2)} 
$$ 
where the boundaries of $N_1$ and $N_2$ are identified with $\T ^2$ through the maps $\operatorname{id}_{\T^2}$ and $\varphi$.  Let us identify as in Section \ref{sec:equiv} the vector space $V_k ( \T^2)$ with the quantization of the torus $E/R$ where $E=H_1 ( \T^2, \R)$ and $ R= H_1 ( \T ^2 , \Z)$.  Let $\mu$ and $\la \in R$ be the homology classes of $S^1 \times \{1\}$ and $\{1 \} \times S^1$. 

By Theorem \ref{theo:solid-torus-state}, $Z_k(N_i)$ is a Lagrangian state supported by $\ga_i \R $ with $\ga_1 = \la $ and $\ga_2 = p \mu + q \la$. In particular the microsupport of $Z_k(N_i)$ is $\ga_i \R + R$. So for any compact neighborhood $C$ of $(\ga_1 \R /\Z) \cap (\ga_2  \R /\Z)$ in $E/R$, we have 
$$ \langle Z_k ( N_1) , Z_k ( N_2) \rangle _{V_{k} ( \T^2)}  = \int_{\tilde C}  (Z_k ( N_1) , Z_k ( N_2)) (x) \mu_M(x)  + O(k^{-\infty})$$
where $\tilde C$ is any lift of $C$ in $E$.
The two circles intersects in $p$ points
$$ (\ga_1 \R/ \Z) \cap (\ga_2 \R/ \Z) = \{ [\ell ( \mu +  q/p \la ) ] ;\; \ell = 0, \ldots , p-1\}.$$ 
By Theorem \ref{theo:solid-torus-state}, we have at $x_\ell = \ell ( \mu +  q/p \la )$, 
$$ Z_k (N_1) (x_\ell  ) =  e^{\frac{i \pi}{4} m_1} \theta_k  \Bigl( \frac{k}{2 \pi} \Bigr)^{1/4} e^{2i \pi k \ell^2 q/p} \sqrt 2 \sin \Bigl( 2 \pi  \frac{\ell q}{p} \Bigr) \Om_{\ga_1}  + O(k^{-\infty} ) $$
and 
$$ Z_k ( N_2) ( x_\ell ) = e^{\frac{i \pi}{4} m_2} \theta_k  \Bigl( \frac{k}{2 \pi} \Bigr)^{1/4} \sqrt 2 \sin \Bigl( 2 \pi  \frac{\ell }{p} \Bigr) \Om_{\ga_2}  + O(k^{-\infty} )  .$$ Let $\pi_1$ and $\pi_2$ be the projection onto $\ga_1 \R$ (resp. $\ga_2 \R$) with kernel $\ga_2 \R$ (resp. $\ga_1\R$). We have
$$ i \frac{(\pi_1^* \Om_{\ga_1}^2 \wedge \pi_2^* \overline{\Om}_{\ga_2}^2 )( \ga_1, \ga_2)}{\om(\ga_1,\ga_2)} = \frac{i}{ \om ( \ga_1, \ga_2)} = \frac{- i}{4 \pi p} .$$
Now the result follows from the pairing formula, cf. Proposition \ref{sec:pairing-formula}. For $\ell = 0 $ and $\ell = p/2$ (if $p$ is even) the two sinus in the formula vanishes. So the corresponding leading order terms vanish and we can compute the next coefficient in the asymptotic expansion with Formula (\ref{eq:deuxieme_coef}). Since the result does not depend on the choice of the complex structure, one may assume that $j \mu = \la$. 
Write $f_i = g_i \Om_{\ga_i}$. Then the Hessian $G$ of $g_1 g_2$ and the bilinear form $H$ are given respectively in the basis $\mu, \la$ by 
$$ 
G = \frac{8 \pi^2}{q+ ip} \begin{pmatrix} 1&0\\ 0&1\end{pmatrix}, \qquad H = \frac{4 i \pi}{  i + q/p}  \begin{pmatrix} i + 2 q/p  & -1 \\ -1 &  i  \end{pmatrix}.
$$
The final result follows easily. 
\end{proof}

\subsection{Melvin-Morton-Rozansky conjecture} 

Let $K$ be any knot in $S^3$. The representations $\rho \in \mo(E_K)$ such that $\tr(\rho(\mu)) $ is close to 2 are abelian. More precisely, we have the following lemma.

\begin{lem}\label{lem:borne}
For any knot $K$ there exists $\epsilon>0$ such that 
\begin{gather} \label{eq:bande_sans_irr}
\pi^{-1}\bigl( r(\mo^{\ir}(E_K)) \bigr) \cap \bigl(  (-\ep,\ep)\la+  \Z \mu \bigr) = \emptyset
\end{gather}
\end{lem}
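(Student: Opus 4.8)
The plan is to argue by contradiction and reduce, by a compactness argument, to a statement about the representation variety of $\pi_1(E_K)$ near the trivial representation.

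Suppose the conclusion fails. Taking $\epsilon = 1/n$ yields for each $n$ a point $x_n = s_n \la + m_n \mu$ with $|s_n| < 1/n$, $m_n \in \Z$, and a class $[\rho_n] \in \mo^{\ir}(E_K)$ with $r([\rho_n]) = [\pi(x_n)]$. Since $\pi$ is $R$-invariant the term $m_n \mu$ plays no role, and since $r([\rho_n]) = [\pi(x_n)]$ while $\pi(x_n)(\mu)$ has trace $2\cos(2\pi s_n)$, we get $\tr \rho_n(\mu) = 2\cos(2\pi s_n) \to 2$. Choose homomorphisms $\rho_n \in \operatorname{Hom}(\pi_1(E_K), \su)$ representing these conjugacy classes. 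As $\pi_1(E_K)$ is finitely generated and $\su$ is compact, $\operatorname{Hom}(\pi_1(E_K),\su)$ is compact, so after passing to a subsequence $\rho_n \to \rho_\infty$. Then $\tr \rho_\infty(\mu) = 2$, hence $\rho_\infty(\mu) = \id$, and since the meridian $\mu$ normally generates $\pi_1(E_K)$ (filling the meridian disc back in recovers $S^3$), $\rho_\infty$ is the trivial representation. We have thus produced irreducible representations converging to the trivial one.

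The contradiction will come from the claim that some neighbourhood of the trivial representation in $\operatorname{Hom}(\pi_1(E_K),\su)$ consists only of abelian representations. Knot complements are aspherical, so $H^{*}(\pi_1(E_K);\R) = H^{*}(E_K;\R)$; moreover $H^2(E_K;\R) \cong H_1(E_K, \partial E_K;\R) = 0$ by Poincar\'e--Lefschetz duality and the fact that $\mu$ generates $H_1(E_K;\Z)$. At the trivial representation the adjoint action on $\lu$ is trivial, so $H^2(\pi_1(E_K);\lu) = H^2(E_K;\R)^{\oplus 3} = 0$, and by the standard deformation theory of representation varieties the trivial representation is a smooth point of $\operatorname{Hom}(\pi_1(E_K),\su)$, of dimension $\dim H^1(\pi_1(E_K);\lu) = 3$. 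On the other hand, the representations factoring through $H_1(E_K;\Z) = \Z$, that is, exactly the abelian ones, form a submanifold isomorphic to $\su$ (via their value on $\mu$), of dimension $3$ and containing the trivial representation; a $3$-dimensional submanifold of a $3$-manifold contains a neighbourhood of each of its points. Hence every representation close enough to the trivial one is abelian, contradicting the existence of the $\rho_n$, and the lemma follows.

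The only non-elementary input — and the step I expect to be the main obstacle — is the deformation-theoretic assertion that $H^2 = 0$ forces the trivial representation to be a smooth point. Alternatively one may invoke the classical results of de Rham and Burde (or Heusener--Porti): a regular abelian representation is never a limit of irreducible representations, and the trivial representation is regular because $\Delta_K(1) = 1 \neq 0$; this is precisely what the compactness argument requires.
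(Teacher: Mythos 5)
Your argument is correct, and its skeleton coincides with the paper's: compactness of the representation space, the fact that the identity is the only trace-$2$ element of $\su$, and normal generation of $\pi_1(E_K)$ by the meridian reduce everything to showing that the trivial representation is not a limit of irreducible ones. Where you diverge is in the justification of that last fact. The paper simply invokes the classical non-accumulation criterion (an abelian representation $\rho_\alpha$ is a limit of irreducibles only if $\Delta_K(\alpha^2)=0$, and $\Delta_K(1)=1$), which is exactly the "alternative" you mention at the end. Your primary route instead proves the needed statement at the trivial representation directly: asphericity of $E_K$ and Poincar\'e--Lefschetz duality give $H^2(\pi_1(E_K);\lu)=0$ for the (trivial) adjoint action, so by standard deformation theory the trivial representation is a smooth point of $\operatorname{Hom}(\pi_1(E_K),\su)$ of dimension $\dim Z^1=\dim H^1=3$ (note $B^1=0$ there, so your $H^1$ count is the right one); the abelian locus, a copy of $\su$, is then open near the trivial representation by invariance of domain, so nearby representations are abelian. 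This buys self-containedness at the one point where it is needed, at the price of the smoothness theorem ($H^2=0$ implies the representation variety is smooth of dimension $\dim Z^1$), which, as you note, is the only non-elementary input; the paper's citation of the Burde--de Rham/Alexander-polynomial fact is the shorter route and is what the authors intend by "regular". Both are complete proofs of the lemma.
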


\begin{proof} First observe that the only element in $\su$ with trace equal to 2 is the identity. Furthermore  $\pi_1(E_K)$ is normally generated by the meridian. So the only representation $\rho \in  \mo(E_K)$ satisfying $\tr(\rho(\mu))=2$ is the trivial representation. Second, since 1 is not a root of the Alexander polynomial of $K$, the trivial representation is not a limit of irreducible representations. We conclude  easily using that $\mo (E_K)$ is compact.
\end{proof}

Necessarily, $\ep < 1/2 $. Indeed, the moduli space $\mo^{\ir}(E_K)$ is not empty (\cite{km}) and $\pi^{-1}\bigl( r(\mo^{\ir}(E_K)) \bigr)$ is preserved by the translation of vector $\la /2$ (section 3.1.3 of \cite{LJ2}).  

\begin{theo} \label{theo:MMR}
Let $K$ be a knot satisfying Conjectures \ref{sec:conj_microsupport} and \ref{conj:abelian}. Let $\epsilon >0 $ satisfying (\ref{eq:bande_sans_irr}) and such that $\Delta_K( e^{2i \pi q }) \neq 0$ for all $q \in ( - \ep, \ep)$. 
Then for any $\delta \in ( 0, \epsilon )$, there exists $C$ such that for any integers $k$ and $\ell$ satisfying $|\ell |  \leqslant k \delta $, we have  
$$ \Bigl| \langle Z_K, \Psi_\ell \rangle -   \frac{e^{i m\frac\pi 4}}{2k^{1/2}}  \frac{ \si - \si^{-1} }{ \Delta_K ( \si^2)} \Bigr| \leqslant C k ^{-3/2}  
$$ 
where $\si = \exp ( i \pi \ell /k)$, $\Delta_K$ is the Alexander polynomial of $K$ and $m$ is a fixed integer.
\end{theo}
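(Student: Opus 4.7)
The plan is to recognize $\langle Z_k(E_K),\Psi_\ell\rangle$ as the asymptotic pairing of two Lagrangian states whose supports meet transversally, and to apply the pairing formula (Proposition~\ref{sec:pairing-formula}). By Lemma~\ref{lem:borne} and the hypothesis on $\epsilon$, every point $q\la$ with $|q|<\epsilon$ is regular in the sense of Section~\ref{sec:moduli-spaces}, so Conjecture~\ref{conj:abelian} provides a Lagrangian state description of $Z_k(E_K)$ on a single open neighborhood $V$ of $[-\delta/2,\delta/2]\la$, valid uniformly for $|\ell|\leq k\delta$. On the other hand, $\Psi_\ell = T^*_{\ell\la/(2k)}\Psi_0$, so Proposition~\ref{sec:base_lagrangian_states} shows that, up to exponentially small remainders, $\Psi_\ell$ is concentrated on the circle $C_\ell=(x_\ell+\R\mu)/(\Z\mu)$ with $x_\ell:=-\ell\la/(2k)$. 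The loci $\R\la$ and $C_\ell$ meet transversally at the single point $x_\ell$ modulo $R$, and all contributions to the scalar product away from a fixed small neighborhood of $x_\ell$ are $O(k^{-\infty})$ by Conjecture~\ref{sec:conj_microsupport} and Proposition~\ref{sec:base_lagrangian_states}.

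Next I would apply Proposition~\ref{sec:pairing-formula} at $x_\ell$ with $\Ga^1=\R\la$, $\Ga^2=C_\ell$, $F_1=t_\la$, $F_2=T^*_{\ell\la/(2k)}t$, $f_1=f(\cdot,k)\Om_\la$ and $f_2=\Om_\mu$. A direct check from (\ref{eq:pull_back}) and (\ref{eq:def_t}) gives $F_1^k(x_\ell)=\overline{F_2^k(x_\ell)}=1$, and the two $(k/2\pi)^{1/4}$ prefactors from the Lagrangian expansions combine with the $(k/2\pi)^{-1/2}$ produced by the pairing formula to yield
\begin{equation*}
\langle Z_k(E_K),\Psi_\ell\rangle = e^{im_0\pi/4}\Bigl(\frac{2\pi}{k}\Bigr)^{1/2}\bigl(a_0+O(k^{-1})\bigr), \qquad a_0=\langle f_0(x_\ell)\Om_\la,\Om_\mu\rangle_{\R\la,\R\mu},
\end{equation*}
where $m_0$ is the integer appearing in Conjecture~\ref{conj:abelian}.

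To evaluate $a_0$, I would use the defining identity from the beginning of Section~6 in linear coordinates $(p,q)$ dual to $(\mu,\la)$; this yields $\langle\Om_\la,\Om_\mu\rangle_{\R\la,\R\mu}^{2}=-i/(4\pi)$, so this pairing equals $\zeta/(4\pi)^{1/2}$ for a fixed eighth root of unity $\zeta$. Substituting the formula $f_0(q\la)=\frac{1}{\sqrt 2}(e^{2i\pi q}-e^{-2i\pi q})/\Delta_K(e^{4i\pi q})$ from Conjecture~\ref{conj:abelian} at $q=-\ell/(2k)$ and using $\Delta_K(t^{-1})=\Delta_K(t)$, the leading term assembles into $\frac{e^{im\pi/4}}{2\sqrt k}\cdot(\sigma-\sigma^{-1})/\Delta_K(\sigma^2)$, with the additional sign and phase absorbed into the integer $m$.

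The main obstacle is to ensure that all error estimates are uniform in $\ell$ for $|\ell|\leq k\delta$. This rests on three points: the neighborhood $V$ in Conjecture~\ref{conj:abelian} can be chosen independently of $\ell$; the Lagrangian expansion of $\Psi_\ell$ near $x_\ell$ is the translate by the unitary $T^*_{\ell\la/(2k)}$ of the expansion of $\Psi_0$ near $0$, hence uniform in $\ell$; and the stationary-phase remainder in Proposition~\ref{sec:pairing-formula} can be controlled uniformly as $x_\ell$ varies over the compact set $[-\delta/2,\delta/2]\la$ modulo $R$. Establishing these uniformities is the main technical task that a complete proof must address.
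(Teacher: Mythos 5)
Your proposal is correct and follows essentially the same route as the paper: localize the scalar product at the transverse intersection point $-\frac{\ell}{2k}\la$ of $\R\la$ and the circle supporting $\Psi_\ell$ using Conjecture \ref{sec:conj_microsupport} and Proposition \ref{sec:base_lagrangian_states}, then apply the pairing formula of Proposition \ref{sec:pairing-formula} (in a version uniform in the parameter $\ell/2k$), with the half-form pairing squaring to $-i/4\pi$ and $f_0$ supplying the Alexander-polynomial factor. The uniformity issues you flag are exactly the ones the paper addresses, by noting that the pairing formula extends uniformly when one Lagrangian varies over a compact family.
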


\begin{proof} 
Let $\delta$ and $\delta'$ be such that $0 < \delta < \delta' <\epsilon$. Using the estimates of Proposition \ref{sec:base_lagrangian_states} together with the fact that $\Psi_\ell = T^*_{\ell \la/2k} \Psi_0$ we prove that there exists a sequence $(C_N)$ of positive numbers satisfying the following: for any $\ell$ and $k$ such that $|\ell |  \leqslant k \delta $ and for any $N$, we have for all $x \notin (-\delta', \delta') \la + \R \mu$ modulo $R$
\begin{gather} \label{eq:neg}
 | \Psi_\ell (x) | \leqslant C_N k^{-N} 
\end{gather}
and for all $x \in (-\delta', \delta') \la + \R \mu$
\begin{gather} \label{eq:lag}
 \bigl| \Psi_\ell (x)  - \Bigl( \frac{k}{2\pi} \Bigr)^{1/4} T^*_{\ell \la / 2k } t_\mu^k (x) \otimes \Om_{\mu}  \bigr|  \leqslant C_N k^{-N}. 
\end{gather}
The assumption on $\ep$, Conjectures \ref{sec:conj_microsupport} and \ref{conj:abelian}  imply that $Z_K = Z^{\operatorname{ab}}_K + O(k^{-\infty})$  on $ (-\epsilon, \epsilon) \la + \R \mu$ with 
 $$Z^{\operatorname{ab}}_K = \Bigl( \frac{k}{2 \pi} \Bigr)^{1/4}  i^{m} t_\la^k(x) \otimes \Om_\la f(x ,k ) $$
where $f(\cdot, k)$, $t_\la$ and $\Om_\la$ are as in the Conjecture  \ref{conj:abelian}. Using this and (\ref{eq:neg}), we obtain that for any $\ell$ and $k$ satisfying $|\ell |  \leqslant k \delta $ 
$$ \langle Z_K , \Psi_\ell \rangle =  \int_D  (Z_K^{\operatorname{ab}}(x) , \Psi_\ell(x) )_{\delta} |\om|(x)   + O(k^{-\infty} ) $$
with the $O$ uniform with respect to $k$ and $\ell$ and the domain $$D = [-\delta', \delta']\la + [-\tfrac{1}{2} , \tfrac{1}{2} ] \mu.$$ 
To estimate this integral, we use  Equation (\ref{eq:lag}) and apply Proposition \ref{sec:pairing-formula}. Actually we need a slightly improved version where one of the Lagrangian submanifolds depends on a parameter. Here $\Ga^1 = \R \la$, $\Ga^2 =   - \frac{\ell}{2k}  \la  + \R \mu$ and Proposition \ref{sec:pairing-formula} extends easily with a result uniform for $\frac{\ell}{2k}$ running over any compact domain.  
So we obtain 
$$ \langle Z_K , \Psi_\ell \rangle = \Bigl( \frac{2\pi}{k} \Bigr)^{1/2} \bigl( t_\la (y)\overline{t}_\mu(y) \bigr)^k    a  f ( y,k) + O(k^{-3/2})  
$$
where $y = - \frac{\ell}{2k}  \la$ is the intersection point of $\R \la$ and  $-\frac{\ell}{2k}  \la  + \R \mu$. The $O$ is uniform with respect to $k$ and $\ell$ satisfying $ |\ell | \leqslant k \delta$. The coefficient $a$ is a square root of 
$$   i  \frac{ \Om_\la^{2} ( \la )   \overline{\Om_\mu}^{2}( \mu) }{\om (\la, \mu)} = - \frac{i}{4 \pi}  $$ 
Furthermore 
$$ f( y, k ) =  \frac{1}{\sqrt 2}   \frac{ \si - \si^{-1}  }{\Delta_K (\si^2)}  + O(k^{-1}) 
$$ 
with $\si  = \exp ( i \pi \ell /k ) $. Finally $t_\la (y) = t_\mu(y)=1$ which concludes the proof. 
\end{proof}

Let us compare this result with the Melvin-Morton-Rozansky theorem proved in \cite{bng} and \cite{ro}. Recall that from Equations \eqref{eq:normalization} and \eqref{eq:etat_noeud} one gets 
$$\langle Z_K,\psi_\ell\rangle=\frac{\sin{\pi/k}}{\sqrt{k}}J_\ell^K(-e^{i\pi/2k})=\tilde{J}_\ell^K(e^{-2i\pi/k})\frac{\sin(\pi\ell/k)}{\sqrt{k}}$$
Hence from Theorem \ref{theo:MMR} we have, 
\begin{xalignat*}{2} 
 \tilde{J}_\ell^K(e^{-2i\pi/k}) = & \frac{ie^{im\pi/4}}{\Delta(e^{2i\pi\ell/k})} + | \sin ( \pi \ell /k )|^{-1} O (k^{-1}) \\
= & \frac{ie^{im\pi/4}}{\Delta(e^{2i\pi\ell/k})} + O( \ell^{-1} ) 
\end{xalignat*}
where the $O$'s are uniform with respect to $k$ and $\ell$ satisfying $1 \leqslant  \ell \leqslant  k \delta $. Here we used that $|\sin ( \pi x /2) | \geqslant x $ for all $x \in [0,1] $. 
 
Comparing with Theorem 1.3  of \cite{glmmr}, we have that $ie^{im\pi/4}=1$. Furthermore the regime $1 \leqslant  \ell \leqslant  k \delta $ is the same as in \cite{glmmr}. But our proof gives an upper bound on $\delta$, that is $\delta < \ep $ where $\ep$ satisfies Lemma 
\ref{lem:borne}. We will see in Theorem 4.6 in \cite{LJ2} what happens when $\ell/k$ overpasses this bound.

\section{Appendix} 

\subsection{Schwartz kernel of Toeplitz operators}
Consider as in section a 2-dimensional symplectic vector space $(E, \om)$ with a lattice $R$ of volume $2 \pi$, a linear complex structure $j$ and a half-form line $\delta$. Recall that we lifted the action of $R$ on $E$ to the bundle $L$ using the Heisenberg group. Let $M$ be the torus  $E/ R$, $L_M \rightarrow M$ be the quotient of $L$ by $R$ and $\delta_M \rightarrow M$ be the trivial line bundle over $M$ with fiber $\delta$. Then the space $\Hilb_k = \Hilb_k(j, \delta)$ is identified with the space $H^0 ( M, L^k_M \otimes \delta_M)$ of holomorphic sections of $L^k_M  \otimes \delta_M$. 

The scalar product of $\Hilb_k$  gives an
isomorphism $ \operatorname{ End} (\Hilb_k) \simeq \Hilb_k \otimes  \overline {\Hilb_k}$. The latter space can be regarded as the space of holomorphic sections of
$$( {L}^k_M \otimes \delta_M ) \boxtimes  ( \overline{L}^k_M \otimes  \overline{\delta}_M ) \rightarrow  M \times \overline{M} .$$ The section associated in this way to an endomorphism is its Schwartz kernel.  

Let $F$ be a holomorphic section of $ L_M \boxtimes \overline{L}_M
 \rightarrow  M \times \overline M$ defined on a neighborhood $U$ of the diagonal such that for all $x \in M$, $F(x, x) = u \otimes \overline{u} $ where $u$ is any normalized vector in $L_{M,x}$. Restricting $U$ if necessary, we have that the pointwise norm of $F$ is $\leqslant 1 $ with equality exactly on the diagonal.

Recall that we defined Toeplitz operators of $\Hilb_k$ and their symbols in Section \ref{sec:tore_Toeplitz_operators}. The following characterization of Toeplitz operators in terms of their Schwartz kernel has been proved in \cite{l5}. It extends actually to any compact K{\"a}hler prequantizable manifold.

\begin{theo} \label{theo:schw-kern-toepl}
A family $(S_k \in \operatorname{ End} (\Hilb_k), \; k \in \Z_{>0} )$ is a Toeplitz operator if and only if its Schwartz kernel $(S_k ( \cdot))$ satisfies the following conditions:
\begin{itemize} 
\item $(S_k (\cdot))$  is a $O(k^{-\infty})$ uniformly on any compact set of $T^2$  which does not meet the diagonal.
\item We have over $U$  
$$ S_k ( x, y) = \Bigl( \frac{k}{2 \pi } \Bigr) F^k ( x, y)  f ( x, y , k ) + O(k^{-\infty}). $$ where $f(\cdot , k)$ is a sequence of $\Ci(U, \delta \otimes \bar{\delta} )$ which admits an asymptotic expansion of the form $f_0 + k^{-1} f_1 + k^{-2} f_2 + \ldots$  for the $\Ci$ topology.
\end{itemize}  
Furthermore, the principal and subprincipal symbols of $(S_k)$ are respectively $g_0$ and $g_1 - \frac{1}{2} \Delta g_0$, where $g_0$, $g_1$ are the restrictions of $f_0$ and $f_1$ to the diagonal and $\Delta$ is the holomorphic Laplacian of $M$.   
\end{theo}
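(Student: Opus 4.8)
The statement is established in \cite{l5} for general prequantizable K{\"a}hler manifolds; here I sketch the argument in the explicit torus situation. The plan is to reduce everything to the asymptotics of the Bergman kernel $\Pi_k(\cdot,\cdot)$, the Schwartz kernel of the orthogonal projector $\Pi_k$, which in the present case is just $\sum_\ell \Psi_\ell(x)\otimes\overline{\Psi_\ell(y)}$ for the basis of Theorem \ref{sec:rep_Heisenberg}. First I would run the Poisson summation manipulation used in the proof of that theorem to obtain: (i) $(\Pi_k(\cdot,\cdot))$ is a $O(k^{-\infty})$ uniformly on every compact of $M\times\overline M$ disjoint from the diagonal; and (ii) on a neighborhood $U$ of the diagonal, $\Pi_k(x,y) = \tfrac{k}{2\pi}F^k(x,y)\,e(x,y,k)$, where $F$ is holomorphic with $F(x,x)=u\otimes\overline u$ and $|F|<1$ off the diagonal, and $e(\cdot,k) = e_0 + k^{-1}e_1 + \cdots$ is a $\Ci$ section of $\delta\boxtimes\overline\delta$ with $e_0$ canonical on the diagonal. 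This is exactly the pair of conditions of the statement, for $S_k=\Pi_k$.

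For the direct implication I would write a Toeplitz operator as $S_k = \Pi_k M(h(\cdot,k)) + R_k$ with $\|R_k\| = O(k^{-\infty})$. Because $\dim\Hilb_k$ is polynomial in $k$ and the $\Psi_\ell$ are bounded together with their derivatives uniformly in $k$, the Schwartz kernel of $R_k$ is $O(k^{-\infty})$ uniformly, while the Schwartz kernel of $\Pi_k M(h(\cdot,k))\Pi_k$ is $\int_M \Pi_k(x,z)\,h(z,k)\,\Pi_k(z,y)\,|\om|(z)$; this is a $O(k^{-\infty})$ off the diagonal by (i). On $U$ I would insert the expansion (ii): the integrand is $(\tfrac{k}{2\pi})^2(F(x,z)F(z,y))^k$ times a smooth amplitude, and since $z\mapsto|F(x,z)F(z,y)|$ has a unique non-degenerate maximum for $x$ close to $y$, the Laplace method yields $S_k(x,y) = \tfrac{k}{2\pi}F^k(x,y)\,f(x,y,k)$ with $f(\cdot,k) = f_0 + k^{-1}f_1+\cdots$ in the $\Ci$ topology, $f_0 = h_0$ on the diagonal. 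The principal symbol is then $g_0 := f_0|_{\mathrm{diag}}$; tracking the $k^{-1}$ term --- which collects $e_1$, the Taylor expansion of $h$ at the critical point, and the Hessian and higher-order corrections of the Laplace expansion --- and comparing with the definition of the subprincipal symbol in Section \ref{sec:tore_Toeplitz_operators}, I would identify the subprincipal symbol as $g_1 - \tfrac12\Delta g_0$ with $g_1 := f_1|_{\mathrm{diag}}$.

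For the converse I would start by showing that a holomorphic family of sections of $(L^k_M\otimes\delta_M)\boxtimes(\overline L^k_M\otimes\overline\delta_M)$ which is $O(k^{-\infty})$ uniformly on compacts off the diagonal induces, through the reproducing structure, an operator of $\Hilb_k$ with operator norm $O(k^{-\infty})$ (again by the uniform bounds on the $\Psi_\ell$); so I may assume $S_k = \tfrac{k}{2\pi}F^k f(\cdot,k)$ supported in $U$. Then I would invert the symbol correspondence by successive approximation: by the direct implication, $\Pi_k M(f_0|_{\mathrm{diag}})\Pi_k$ has a Schwartz kernel matching that of $S_k$ modulo $O(k^{-1})\cdot\tfrac{k}{2\pi}F^k$ on $U$ and $O(k^{-\infty})$ off the diagonal, so $S_k - \Pi_k M(f_0|_{\mathrm{diag}})\Pi_k$ again satisfies (i)--(ii) with amplitude $O(k^{-1})$; iterating produces a formal symbol $h_0 + k^{-1}h_1+\cdots$ whose Borel resummation $h(\cdot,k)\in\Ci_R(E)$ gives $S_k - \Pi_k M(h(\cdot,k))\Pi_k = O(k^{-\infty})$ in operator norm, so $(S_k)$ is a Toeplitz operator, and its symbols are those computed in the direct implication.

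The step I expect to be the main obstacle is the precise determination of the subprincipal symbol: one has to carry the Laplace expansion of $\int\Pi_k(x,z)h(z)\Pi_k(z,y)|\om|(z)$ through second order, which requires knowing the subleading Bergman coefficient $e_1$ explicitly (here obtainable from the Poisson summation formula) and controlling the Hessian of the phase and the third- and fourth-order contributions, and then reconciling the outcome with the $-\tfrac12\Delta$ bookkeeping of the subprincipal symbol fixed in Section \ref{sec:tore_Toeplitz_operators}. By contrast, the off-diagonal operator-norm estimate and the Borel resummation are routine. Finally, the whole scheme carries over to an arbitrary compact prequantizable K{\"a}hler manifold once the explicit torus Bergman kernel is replaced by the Boutet de Monvel--Sj{\"o}strand parametrix, which is the route of \cite{l5}.
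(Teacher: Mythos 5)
The paper does not prove this theorem: it cites \cite{l5}, where the result is established for arbitrary compact prequantizable K\"ahler manifolds by means of the Boutet de Monvel--Sj\"ostrand parametrix. Your proposal is therefore not a reconstruction of the paper's proof but a genuinely different, more elementary route specific to the torus, substituting the explicit theta-function Bergman kernel (computed by Poisson summation, as the paper itself notes in the proof of Theorem \ref{sec:Toeplitz_operators}) for the BdM--Sj\"ostrand machinery. The overall architecture you describe --- establish (i)--(ii) for the projector $\Pi_k$, deduce the direct implication by Laplace asymptotics for $\int \Pi_k(x,z)h(z,k)\Pi_k(z,y)\,|\om|(z)$, and prove the converse by iterating a one-step parametrix and Borel-resumming --- is the standard Berezin--Toeplitz scheme and is sound. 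What the torus-specific route buys is concreteness (the off-diagonal decay, the phase $F$, and the leading amplitude $e_0$ are all fully explicit); what it loses is the generality that makes the result quotable for other moduli spaces, which is exactly why the paper defers to \cite{l5}.

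You are right to flag the subprincipal-symbol identification as the genuine obstacle, and I would sharpen that warning: you write that you ``would identify the subprincipal symbol as $g_1-\tfrac12\Delta g_0$'' after tracking the $k^{-1}$ term, but this is the conclusion, not the argument. To actually carry it out one must (a) compute the subleading Bergman coefficient $e_1$ on the diagonal (for the flat torus it is a constant, but you still need its value), (b) expand the phase of the Laplace integral to fourth order since the third-order term contributes at $O(k^{-1})$ after one integration by parts, and (c) verify that the combination of $e_1$, the Hessian correction, and the second-order Taylor term of $h$ at the critical point reorganizes into precisely $h_1 - \tfrac12\Delta h_0$ in the convention of Section \ref{sec:tore_Toeplitz_operators}, using that the holomorphic Laplacian of $M$ is the one given in (\ref{eq:def_laplacien}). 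Until that bookkeeping is done your sketch only establishes the first-order part of the theorem. A second, smaller gap: in the converse direction you need not just that an off-diagonal $O(k^{-\infty})$ kernel gives an operator of $O(k^{-\infty})$ norm, but that the Schur-test or trace estimate you invoke is uniform once the kernel is only controlled on compact sets; on the torus this is harmless because $M$ is compact, but you should say so explicitly, since it is exactly the hypothesis that would fail on a noncompact model.
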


To be more precise, let $\De : M \rightarrow M^2$ be the  diagonal embedding. Then $\De ^ * ( \delta \boxtimes \bar  \delta) = \delta \otimes \bar \delta $ which is canonically isomorphic to the trivial line bundle over $M$ because $\delta$ is hermitian. So  the restriction to the diagonal of sections of $\delta \boxtimes \bar \delta  \rightarrow M^2$ can be identified with functions. Furthermore the holomorphic Laplacian is 
\begin{gather} \label{eq:def_laplacien}
 \Delta = \frac{ \tau - \bar { \tau} } { 2 i \pi } \partial_{\zeta} \partial_{\bar{ \zeta}} 
\end{gather}
where $\tau$ is the parameter of the complex structure defined as in Theorem \ref{sec:rep_Heisenberg} and $\zeta$ is the holomorphic coordinate $p + \tau q$ of $E$.

\subsection{Proof of theorem \ref{sec:Toeplitz_operators}}  

We will deduce Theorem \ref{sec:Toeplitz_operators} from Theorem \ref{theo:schw-kern-toepl}. We lift every function or section defined on a neighborhood of the diagonal of $M^2$ to a neighborhood of the diagonal of $ E^2$. In particular the section $F$ lifts to 
$$ F  =  \exp \Bigl(  - \frac{2 i \pi }{\tau - \bar{\tau} }  ( \zeta_1 - \bar{\zeta}_2 )^2 \Bigr)  t \boxtimes \bar{t} $$
Here $t$ is the holomorphic section of $L \rightarrow E$ defined in  (\ref{eq:def_t}), the complex $\tau$ parameterizes the complex structure as in Theorem \ref{sec:rep_Heisenberg}, $\zeta_1$ and $\zeta_2$ are the pull-back of the holomorphic coordinate $\zeta = p + \tau q$ of $E$ to the first and second factor of $E^2$.     
It is known that the Schwartz kernel of the projector $\Pi$ lifts to $\bigl( \frac{k}{2 \pi} \bigr) F  + O(k^{-\infty}) $ on a neighborhood of the diagonal.  This can be proved by a direct computation using the orthogonal basis introduced in Theorem \ref{sec:rep_Heisenberg} and Poisson summation formula (cf. proof of Theorem in 8.2 \cite{l2}). This follows also at first order from Theorem  \ref{theo:schw-kern-toepl}. 

The Schwartz kernel of $T^*_{\nu/k}$ is $\bigl( \frac{k}{2 \pi} \bigr) T^*_{\nu /k} F^k $ where $T^*_{\nu /k}$ acts on the first factor. 
It follows from Equations (\ref{eq:pull_back}) and (\ref{eq:def_t}) that
\begin{xalignat*}{2} 
 T_{\nu / k }^* t ^k = & \exp \bigl(  - 2 i \pi ( \dot p q - \dot q p )  +  2 i \pi k ( \zeta + \dot \zeta/k )( q + \dot q /k ) \bigr)  \\ 
= & \exp \bigl(  2 i \pi ( 2 \zeta \dot q + \dot \zeta \dot q / k ) \bigr) t^k 
\end{xalignat*}
where $\nu = \dot p \mu + \dot q \la $ and $\dot \zeta = \dot p + \tau \dot q $. So the 
  Schwartz kernel of $T^*_{\nu/k}$  lifts on a neighborhood of the diagonal to 
$$ \Bigl(  \frac{k}{2 \pi} \Bigr) F^k \si ( \zeta_1, \bar{\zeta}_2 ) + O(k^{-\infty}) $$
 with 
\begin{xalignat*}{2} 
\si ( \zeta, \bar{ \zeta})  = &  \exp \Bigl(  -\frac{2 i \pi }{\tau - \bar{\tau}} ( 2 \dot \zeta ( \zeta - \bar \zeta ) + \dot \zeta^2/k  ) +  2 i \pi ( 2 \zeta \dot q + \dot \zeta \dot q / k ) \Bigr) \\
= &  \exp \Bigl( - 4 i \pi \dot \zeta q - \frac{ 2 i \pi }{ \tau - \bar { \tau}} \dot \zeta ^2 / k + 4 i \pi \zeta \dot q + 2 i \pi \dot \zeta \dot q / k \Bigr) 
\\
= & \exp \Bigl( 4 i \pi ( p \dot q - \dot p q ) - \frac{2 i \pi }{ \tau - \bar{ \tau}}  \bigl| \dot \zeta \bigr|^2  / k  \Bigr)  
\end{xalignat*}
Finally it follows from (\ref{eq:def_laplacien}) that
$ \Delta \si = - \frac{ 2 i \pi }{ \tau - \bar \tau } \bigl| \dot \zeta \bigr|^2  \si $ so that
$$ \si - \frac{1}{2 k } \Delta \si =  \exp \bigl( 4 i \pi ( p \dot q - \dot p q ) \bigr)  + O(k^{-2} ) $$ 
which concludes the proof.


\begin{thebibliography}{10}

\bibitem[A05]{an}
J. E. Andersen.
\newblock Deformation quantization and geometric quantization of abelian moduli spaces.
\newblock {\em Comm. Math. Phys}, \textbf{255}, no. 3, (2005), 727-745. 

\bibitem[BNG96]{bng}
D.~Bar-Natan and S.~Garoufalidis.
\newblock On the Melvin-Morton-Rozansky conjecture.
\newblock {\em Invent. Math.}, \textbf{125}, (1996), 103-133.

\bibitem[BHMV95]{bhmv}
C. Blanchet, N. Habegger,  G. Masbaum and P. Vogel.
\newblock Topological quantum field theories derived from the Kauffman bracket.
\newblock {\em Topology}, \textbf{34}, (1995), 883-927.

\bibitem[C03]{l5}
L.~Charles.
\newblock Berezin-Toeplitz operators, a semi-classical approach, 
\newblock {\em Comm. Math. Phys.}, \textbf{239}, no. 1-2, (2003), 1-28.


\bibitem[C03]{l3}
L.~Charles.
\newblock Quasimodes and Bohr-Sommerfeld conditions for the Toeplitz operators, 
\newblock {\em Comm. Partial Differential Equations}, \textbf{28}, Vol. 9-10, (2003), 1527-1566.

\bibitem[C06]{l4}
L.~Charles.
\newblock Symbolic calculus for Toeplitz operators with half-forms.
\newblock {\em Journal of Symplectic Geometry}, \textbf{4}, Vol 2, (2006), 171-198.

\bibitem[C10a]{l1}
L.~Charles.
\newblock On the Quantization of Polygon Spaces. 
\newblock {\em Asian Journ. of Math.}, \textbf{14}, Vol. 1, (2010), 109-152.

\bibitem[C10b]{l2}
L.~Charles.
\newblock Asymptotic properties of the quantum representations of the modular group. 
\newblock to appear in {\em Trans. Amer. Math. Soc.}, 2010.

\bibitem[C11]{Ctoric}
L.~Charles
\newblock Torus knot state asymptotics.
\newblock in preparation.

\bibitem[CM11]{LJ2}
L.~Charles and J.~March{\'e}.
\newblock Knot state asymptotics II. Witten conjecture and irreducible representations.



\bibitem[CCGLS94]{ccgls}
D. Cooper, D, M. Culler, H. Gillet, D. Long and P. Shalen.
\newblock Plane curves associated to character varieties of 3-manifolds.
\newblock {\em Invent. Math.}, \textbf{118}, (1994), 47-84.

\bibitem[D74]{duistermaat} 
J. J. Duistermaat, 
\newblock Oscillatory integrals, Lagrange immersions and unfolding of singularities.
\newblock {\em Comm. Pure Appl. Math.}, \textbf{27}, (1974), 207-281. 


\bibitem[G98]{gar1}
S.~Garoufalidis.
\newblock Applications of TQFT invariants in low dimensional topology
\newblock  {\em Topology}, \textbf{37}, (1998), 219-224.

\bibitem[G04]{gar}
S.~Garoufalidis.
\newblock On the characteristic and deformation varieties of a knot.
\newblock Proceedings of the Casson Fest, {\em Geometry and Topology Monographs}, \textbf{7}, (2004), 91-309.

\bibitem[GL05]{gl}
S.~Garoufalidis and T. T. Q. Le.
\newblock The colored Jones function is q-holonomic.
\newblock {\em Geom. Topol}, \textbf{9}, (2005), 1253-1293.

\bibitem[GL]{glmmr}
S.~Garoufalidis and T. T. Q. Le.
\newblock Asymptotics of the colored Jones function of a knot.
\newblock Arxiv: math/0508100.

\bibitem[GS10]{gs} 
S. Garoufalidis and X. Sun.
\newblock The non-commutative A-polynomial of twist knots.
\newblock {\em Journal of Knot Theory and its Ramifiations}, \textbf{19} (2010), 1571-1595.


\bibitem[GU10]{gu} 
R. Gelca and A. Uribe.
\newblock From classical theta functions to topological quantum field theory.
\newblock arxiv:1006.3252




\bibitem[GM10]{gm}
P.~Gilmer and G.~Masbaum.
\newblock  Maslov index, Lagrangians, Mapping Class Groups and TQFT. 
\newblock to appear in {\em Forum Mathematicum}, arXiv:0912.4706, 2010.

\bibitem[Ha01]{habiro}
K.~Habiro.
\newblock On the quantum sl(2) invariants of knots and integral homology spheres.
\newblock In: Invariant of knots and 3-manifolds (Kyoto 2001), {\em Geometry and Topology Monographs}, Vol. 4, (2002), 55-68.

\bibitem[Hi04]{hikami}
K.~Hikami.
\newblock Difference equation of the colored Jones polynomial for torus knot.
\newblock {\em Internat. J. Math.}, \textbf{15}, no.9, (2004), 959-965.

\bibitem[H{\"o}90]{hormander} 
L. H{\"o}rmander.
\newblock The analysis of linear partial differential operators. I.
\newblock Grundlehren der Mathematischen Wissenschaften, Springer-Verlag, {\bf 256}, (1990). 

\bibitem[J92]{je}
L.~C.~Jeffrey.
\newblock Chern-{S}imons-{W}itten invariants of lens spaces and torus bundles,
  and the semiclassical approximation.
\newblock {\em Comm. Math. Phys.}, \textbf{147}, no.3, (1992), 563-604.

\bibitem[KM04]{km}
P. B. Kronheimer and T.S. Mrowka.
\newblock Witten's conjecture and property P. 
\newblock {\em Geom. Topol.}, \textbf{8}, (2004), 295-310. 

\bibitem[Le06]{le}
T.~Q.~T.~Le.
\newblock The Colored Jones Polynomial and the A-Polynomial of Knots.
\newblock {\em Adv. in Math.}, \textbf{207}, (2006), 782-804.

\bibitem[Ma03]{masbaum}
G.~Masbaum.
\newblock Skein-theoretical derivation of some formulas of Habiro.
\newblock {\em Algebraic and Geometric Topology}, \textbf{3}, (2003), 537-556.

\bibitem[Mo95]{morton}
H.~C.~Morton.
\newblock The coloured Jones function and Alexander polynomial for torus knots.
\newblock {\em Math. Proc. Cambridge Philos. Soc.}, \textbf{117}, no.1, (1995), 129-135.

\bibitem[Mu83]{mumford}
D. Mumford. 
\newblock Tata lectures on theta. I.
\newblock Progress in Mathematics, {\bf 28}, Birkh{\"a}user Boston, Inc., Boston, MA, (1983).

\bibitem[RT91]{rt}
N. Reshetikhin and V. G. Turaev.
\newblock Invariants of {$3$}-manifolds via link polynomials and quantum groups.
\newblock {\em Invent. Math.}, \textbf{103}, no. 3, (1991), 547-597.

\bibitem[R98]{ro}
L.~Rozansky.
\newblock The universal R-matrix, Burau representation and the Melvin-Morton expansion of the colored Jones polynomial
\newblock {\em Adv. Math.}, \textbf{134}, no. 1, (1998), 1-31.

\bibitem[S96]{sorger} 
C. Sorger.
\newblock La formule de Verlinde
\newblock {\em Ast{\'e}risque}, \textbf{237}, no. 3, (1996), 87-114.

\bibitem[W89]{witten}
E.~Witten.
\newblock Quantum field theory and the Jones polynomial.
\newblock {\em Comm. Math. Phys.}, \textbf{121}, no. 3, (1989), 351-399.

\end{thebibliography}
\end{document}